\newtheorem{theorem}{Theorem}[section]
\newtheorem{lemma}[theorem]{Lemma}
\newtheorem{note}[theorem]{Note}
\newtheorem{prop}[theorem]{Proposition}
\newtheorem{cor}[theorem]{Corollary}
\newtheorem*{Theorem1'}{Theorem 1'}
\theoremstyle{definition}
\theoremstyle{remark}
\numberwithin{equation}{section}
\newcommand \g{{\mathfrak g}}
\newcommand  \s{{\mathfrak s}}
\newcommand \tr{{\mathrm {tr}}}
\newcommand \End{{\mathrm {End}}}
\newcommand \dm{{\mathrm {dim}}}
\newcommand \gl{{\mathfrak {gl}}}
\renewcommand \sl{{\mathfrak {sl}}}
\newcommand \so{{\mathfrak {so}}}
\newcommand \sy{{\mathfrak {sp}}}
\newcommand \C{{\mathbb C}}
\newcommand \GL{{\mathrm {GL}}}
\newcommand \la{{\lambda}}
\newcommand \al{{\alpha}}
\newcommand \va{{\varepsilon}}
\newcommand \vp{{\varphi}}
\begin{document}

\title [Composition series of $\gl(m)$ as a module for its classical subalgebras]{Composition series of $\gl(m)$ as a module for its classical subalgebras over an arbitrary field}

\author{Martin Chaktoura}
\address{Department of Mathematics and Statistics, Univeristy of Regina, Canada}
\email{martin\_chaktoura@yahoo.com.ar}

\author{Fernando Szechtman}
\address{Department of Mathematics and Statistics, Univeristy of Regina, Canada}
\email{fernando.szechtman@gmail.com}
\thanks{The second author was supported in part by an NSERC discovery grant}

\subjclass[2000]{Primary 17B10; Secondary 17B05}



\keywords{Lie algebra; bilinear form; irreducible module;
composition series}

\begin{abstract}
Let $F$ be an arbitrary field and let $f:V\times V\to F$ be a
non-degenerate symmetric or alternating bilinear form defined on
an $F$-vector space of finite dimension $m\geq 2$. Let $L(f)$ be
the subalgebra of $\gl(V)$ formed by all skew-adjoint
endomorphisms with respect to $f$. We find a composition series
for the $L(f)$-module $\gl(V)$ and furnish multiple
identifications for all its composition factors.
\end{abstract}

\maketitle

\section{Introduction}

Let $F$ be any field. No assumptions are made on $F$ or its
characteristic, which will be denoted by $\ell$. Let $V$ be an
$F$-vector space of finite dimension $m\geq 2$, and let $f:V\times
V\to F$ be a non-degenerate symmetric or alternating bilinear
form. Consider the subalgebra $L(f)$ of $\gl(V)$ defined by
$$
L(f)=\{x\in\gl(V)\,|\, f(xv,w)=-f(v,xw)\text{ for all }v,w\in V\}.
$$
Thus $L(f)$ is the symplectic Lie algebra if $f$ is alternating,
or an orthogonal Lie algebra if $f$ is symmetric and
non-alternating (the last condition is only required if $\ell=2$).
Note that, in general, the isomorphism type of an orthogonal Lie
algebra depends on the equivalence type of the underlying form,
and we will only speak of {\em the} orthogonal Lie algebra when
$F=F^2$, i.e., when every element of $F$ is a square. We further
let $\s=Z(\gl(V))$, which consists of all scalar operators.

In this paper we find a composition series for the $L(f)$-module
$\gl(V)$ and furnish multiple identifications for all its
composition factors. All possible cases are considered, without
exception. Numerous cases arise, as all of $\ell,F,f$ and $m$ play
a role in the determination of the structure of $\gl(V)$. Our main
results are as follows.

\begin{theorem}\label{intro31}
Suppose that $\ell=2$, that $m=2n$, and that $f$ is non-degenerate
and alternating. Then

(1) If $4\mid m$ then the $L(f)$-module $\gl(V)$ has $m+6$
composition factors. A composition series can be obtained by
inserting $m-1$ arbitrary subspaces between $L(f)$ and
$L(f)^{(1)}=[L(f),L(f)]$ in the series
$$
0\subset \s\subset  L(f)^{(2)}\subset  L(f)^{(1)}\subset
L(f)\subset U \subset \sl(V)\subset \gl(V),
$$
where $U=L(f)\oplus\langle x\rangle$, $x\in\sl(V)$,
$[x,L(f)]\subseteq L(f)$, and
$$
x=\left(%
\begin{array}{cc}
  I_n & 0\\
  0 & 0 \\
\end{array}%
\right).
$$
All composition factors are trivial, except for
$L(f)^{(2)}/\s\cong \sl(V)/U$, which is of dimension
${{m}\choose{2}}-2$. Moreover, $L(f)^{(2)}/\s$ is a simple Lie
algebra if and only if $m>4$.

(2) If $m\neq 2$ and $4\nmid m$ then the $L(f)$-module $\gl(V)$
has $m+4$ composition factors.  A composition series can be
obtained by inserting $m-1$ arbitrary subspaces between $L(f)$ and
$L(f)^{(1)}$ in the series
$$
0\subset  L(f)^{(2)}\subset  L(f)^{(1)}\subset L(f)\subset
\sl(V)\subset \gl(V).
$$
All composition factors are trivial, except for $L(f)^{(2)}\cong
\sl(V)/L(f)$. Moreover, $L(f)^{(2)}$ is a simple Lie algebra of
dimension ${{m}\choose{2}}-1$.

(3) $L(f)$ is isomorphic to the symmetric square $S^2(V)$ as
$L(f)$-modules, and, relative to suitable basis of $V$, consists
of all matrices
\begin{equation}\label{maint}
\left(\begin{array}{cc}
   A & B\\
   C & A'
  \end{array}
\right),\, A,B,C\in\gl(n),\text{ where }B,C\text{ are symmetric}.
\end{equation}

(4) $L(f)^{(1)}$ is isomorphic to the exterior square
$\Lambda^2(V)$ as $L(f)$-modules, and consists of all matrices
(\ref{maint}) such that $B,C$ are alternating.

(5) $L(f)^{(2)}$ is isomorphic to the kernel of the contraction
$L(f)$-epimorphism $\Lambda^2(V)\to F$, given by $v\wedge w\mapsto
f(v,w)$, and consists of all matrices (\ref{maint}) such that
$B,C$ are alternating and $\tr(A)=0$.

(6) $L(f)/L(f)^{(2)}$ is isomorphic, as Lie algebra, to
${\mathfrak h}(n)$, the Heisenberg algebra of dimension $2n+1$.
\end{theorem}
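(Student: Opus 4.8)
The plan is to deduce (6) directly from the explicit matrix models furnished by parts (3) and (5). Fix the basis of $V$ used there and write $E_{ij}$ for the $n\times n$ matrix units. Recall that $L(f)$ consists of all matrices of the form (\ref{maint}) with $B,C$ symmetric, whereas $L(f)^{(2)}$ consists of exactly those with $B,C$ alternating and $\tr(A)=0$. Since $\ell=2$, a symmetric matrix is alternating precisely when all of its diagonal entries vanish. Hence the linear map that sends a matrix of the form (\ref{maint}) to the $2n+1$ scalars formed by the diagonal of $B$, the diagonal of $C$, and $\tr(A)$ is surjective onto $F^{n}\times F^{n}\times F$ with kernel exactly $L(f)^{(2)}$; in particular $\dim L(f)/L(f)^{(2)}=2n+1$, and two matrices of the form (\ref{maint}) are congruent modulo $L(f)^{(2)}$ if and only if they share the same diagonal of $B$, the same diagonal of $C$, and the same $\tr(A)$.

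I would then choose convenient representatives. For $1\le i\le n$ let $P_i$, respectively $Q_i$, be the matrix of the form (\ref{maint}) whose $B$-block, respectively $C$-block, equals $E_{ii}$ and whose remaining free blocks vanish; each lies in $L(f)$ but not in $L(f)^{(2)}$. Set $Z:=[P_1,Q_1]$, which is the block matrix with both diagonal blocks equal to $E_{11}$ and both off-diagonal blocks zero, and which therefore has $\tr(A)=1$. Because $L(f)^{(2)}$ is a term of the derived series of $L(f)$, it is an ideal, so the bracket of $L(f)$ descends to $L(f)/L(f)^{(2)}$, and it suffices to compute the brackets of the classes $\overline{P_i},\overline{Q_i},\overline{Z}$. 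By the first paragraph these $2n+1$ classes are linearly independent — the $P_i$ contribute independent diagonal entries of $B$, the $Q_i$ those of $C$, and $Z$ contributes the trace — hence they form a basis of $L(f)/L(f)^{(2)}$.

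The remaining brackets are immediate from block-matrix multiplication: $[P_i,P_j]=0=[Q_i,Q_j]$ for all $i,j$ (a product of two matrices having the shape of the $P$'s, or of the $Q$'s, is zero); $[P_i,Q_j]=0$ for $i\ne j$ because $E_{ii}E_{jj}=0$; $[P_i,Q_i]$ is the block matrix with both diagonal blocks $E_{ii}$ and off-diagonal blocks zero, which shares with $Z$ the same diagonal of $B$ ($=0$), diagonal of $C$ ($=0$) and value of $\tr(A)$ ($=1$), hence is congruent to $Z$ modulo $L(f)^{(2)}$; and $[Z,P_i]=0=[Z,Q_i]$ for all $i$. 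Therefore, in $L(f)/L(f)^{(2)}$ we have $[\overline{P_i},\overline{Q_j}]=\delta_{ij}\overline{Z}$, the element $\overline{Z}$ is central, and every other bracket of basis vectors is zero. These are exactly the defining relations of the Heisenberg algebra ${\mathfrak h}(n)$ of dimension $2n+1$, whose standard basis $x_1,\dots,x_n,y_1,\dots,y_n,z$ satisfies $[x_i,y_j]=\delta_{ij}z$ with all other brackets of basis vectors zero. Hence the assignment $\overline{P_i}\mapsto x_i$, $\overline{Q_i}\mapsto y_i$, $\overline{Z}\mapsto z$ extends to the required isomorphism of Lie algebras.

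I do not foresee a genuine obstacle: once parts (3) and (5) are available, the argument is a short and explicit calculation. The one point that demands a little care is the characteristic-two distinction between symmetric and alternating matrices, which is precisely what produces the $2n$ non-central basis vectors $\overline{P_i},\overline{Q_i}$ in addition to the central $\overline{Z}$, and thus makes the quotient Heisenberg rather than merely abelian.
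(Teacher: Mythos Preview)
Your argument for part (6) is correct and is essentially identical to the paper's own proof (Lemma \ref{lmodl2}): the paper chooses the very same representatives $b_i=P_i$, $c_i=Q_i$, and $a=Z$, and records the single relation $[b_i,c_i]=a$ in the quotient. Your write-up is somewhat more explicit in justifying that these classes form a basis of $L(f)/L(f)^{(2)}$ via the surjection to $F^n\times F^n\times F$, and in checking all brackets rather than only the nontrivial one, but the method is the same.
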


The case $m=4$ of Theorem \ref{intro31} is exceptional in various
ways. Firstly, while $L(f)^{(2)}/\s$ is an irreducible
$L(f)$-module, it is not simple as a Lie algebra. A similar
phenomenon occurs to $L(f)^{(1)}$ if $m=4$, $\ell=2$ but $f$ is
non-degenerate, symmetric and non-alternating, as discussed below.
Secondly, $L(f)^{(2)}$ is also isomorphic, as Lie algebra, to
${\mathfrak h(2)}$, so $L(f)$ is an extension of ${\mathfrak
h(2)}$ by ${\mathfrak h(2)}$. Thirdly, the kernel of the
representation of $L(f)$ on $L(f)^{(2)}/\s$ is $L(f)^{(2)}$. This
gives a 4-dimensional faithful irreducible representation of
${\mathfrak h}(2)$. This phenomenon is impossible in
characteristic not 2. Full details of the case $m=4$, as well as
its connections to the problem of finding the smallest dimension
of a faithful module for a given Lie algebra (see \cite{Bu} and
\cite{CR}) can be found in \S\ref{lastsec}, which also treats the
much easier case $m=2$.

We remark that Bourbaki \cite{B}, Chapter I, \S 6, Exercise 25(b),
studied the ideal structure of $L(f)$ when $\ell=2$ and $f$ is
non-degenerate and alternating, but made mistakes involving $\s$,
$L(f)^{(1)}$ and $L(f)^{(2)}$ (see Note \ref{errorb} for details).
Theorem~\ref{intro31} corrects Bourbaki's information and expands
it to include the structure of $\gl(V)$ as $L(f)$-module, as well
as providing further identifications for all composition factors.

In the case of orthogonal Lie algebras in characteristic 2, which
was not considered in \cite{B}, we have the following result.

\begin{theorem}\label{intro41} Suppose that $\ell=2$, that $m\geq 2$, and that $f$ is
non-degenerate, symmetric and non-alternating. Then

(1) The $L(f)$-module $\gl(V)$ has $m+2$ composition factors. A
composition series can be obtained by inserting $m-1$ arbitrary
subspaces between $L(f)$ and $L(f)^{(1)}$ in the series
$$
0\subset L(f)^{(1)}\subset L(f)\subset \gl(V).
$$
Moreover, if $m=3$ or $m\geq 5$ then $L(f)^{(1)}$ is a simple Lie
algebra of dimension~${{m}\choose{2}}$.

(2) $L(f)$ is isomorphic to the symmetric square $S^2(V)$ as
$L(f)$-modules. Moreover, there is a basis of $V$ relative to
which $f$ has Gram matrix $D=\mathrm{diag}(d_1,\dots,d_m)$ and,
relative to this basis, $L(f)$ consists of all $A\in\gl(m)$ such
that $d_iA_{ij}=d_j A_{ji}$.

(3) $L(f)^{(1)}$ is isomorphic to the exterior square
$\Lambda^2(V)$ as $L(f)$-modules. Moreover, relative to the above
basis, $L(f)^{(1)}$ consists of all $A\in\gl(m)$ such that
$A_{ii}=0$ and $d_iA_{ij}=d_j A_{ji}$.

(4) $\gl(V)/L(f)\cong L(f)^{(1)}$ as $L(f)$-modules. In
particular, $\gl(V)$ has $m$ trivial composition factors, and 2
composition factors isomorphic to $L(f)^{(1)}\cong \Lambda^2(V)$,
which is itself the trivial module if and only if $m=2$.
\end{theorem}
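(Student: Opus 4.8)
The plan is to establish Theorem~\ref{intro41} by first setting up coordinates adapted to the form $f$, then analyzing $\gl(V)$ as an $L(f)$-module by successively peeling off the natural submodules $L(f)^{(1)}\subseteq L(f)\subseteq\gl(V)$. First I would note that since $\ell=2$, a non-degenerate symmetric non-alternating form $f$ admits an orthogonal basis $e_1,\dots,e_m$, so $f$ has Gram matrix $D=\mathrm{diag}(d_1,\dots,d_m)$ with all $d_i\neq 0$. Writing out the skew-adjointness condition $f(Av,w)=-f(v,Aw)=f(v,Aw)$ in this basis gives exactly $d_iA_{ij}=d_jA_{ji}$, proving the matrix description in~(2); the diagonal entries are unconstrained. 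For~(3), the key observation is that $L(f)^{(1)}=[L(f),L(f)]$: one computes brackets of the obvious generators of $L(f)$ (the ``symmetric'' pairs $d_jE_{ij}+d_iE_{ji}$ and, if they lie in $L(f)$, the diagonal matrices) and checks that the commutators span precisely the subspace with $A_{ii}=0$ and $d_iA_{ij}=d_jA_{ji}$; one also checks this subspace is an ideal, and that the quotient $L(f)/L(f)^{(1)}$ — spanned by diagonal matrices modulo the derived algebra — is abelian, hence equals the abelianization.

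Next I would identify the module structures. For~(2), the map $S^2(V)\to\gl(V)$ sending $v\cdot w$ to the rank-$\le 2$ operator $u\mapsto f(v,u)w+f(w,u)v$ (suitably normalized) is $L(f)$-equivariant; I would check its image is exactly $L(f)$ and that it is injective by a dimension count, $\dim S^2(V)=\binom{m+1}{2}=\binom{m}{2}+m=\dim L(f)$, the latter equality being immediate from the matrix description in~(2). For~(3), restricting the analogous construction to $\Lambda^2(V)$ via $v\wedge w\mapsto(u\mapsto f(v,u)w-f(w,u)v)$ — note in characteristic $2$ this is genuinely different from the symmetric version — lands inside $L(f)$ with image exactly the trace-zero-diagonal part, i.e. $L(f)^{(1)}$, and $\dim\Lambda^2(V)=\binom{m}{2}=\dim L(f)^{(1)}$. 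Since $L(f)^{(1)}$ is itself an $L(f)$-module and all composition factors of $L(f)/L(f)^{(1)}$ are trivial (it is abelian of dimension $m$, on which $L(f)$ acts trivially because $[L(f),L(f)]=L(f)^{(1)}$), this accounts for $\binom{m}{2}+m$ of the composition series of $L(f)$.

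For~(4) and the composition-factor count in~(1), the main point is to produce the isomorphism $\gl(V)/L(f)\cong L(f)^{(1)}$ of $L(f)$-modules. The natural candidate is the ``adjoint'' map: pick the $f$-adjoint involution $*$ on $\gl(V)$ (so $L(f)$ is its $-1=+1$-eigenspace, i.e. $\{x: x^*=x\}$ — here characteristic $2$ collapses the usual eigenspace decomposition), and consider $x\mapsto x-x^*$ or $x\mapsto x+x^*$; one of these descends to an $L(f)$-module map $\gl(V)/L(f)\to L(f)^{(1)}$ which I would show is an isomorphism by computing kernel and image explicitly in the basis $e_i$ (the off-diagonal free part of $\gl(V)$ maps onto the trace-zero-diagonal part of $L(f)$, while the diagonal of $\gl(V)$ is killed modulo $L(f)$). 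Equivariance is formal from $(gx)^* = -x^*g^{-1}\cdot(\text{}) $ rewritten at the Lie-algebra level as $[\gamma,x]^*=[\gamma,x^*]$ for $\gamma\in L(f)$. Then $\gl(V)$ has $\dim\gl(V)=m^2$ composition factors distributed as: $m$ trivial ones from $L(f)/L(f)^{(1)}$, $m$ trivial ones from $(\gl(V)/L(f))/(\text{image of }L(f)^{(1)})$ — wait, more precisely the two copies of $L(f)^{(1)}\cong\Lambda^2(V)$ contribute their own composition factors while the remaining $\binom{m}{2}+\binom{m}{2}$... I would instead argue cleanly: $\gl(V)$ has a filtration $0\subset L(f)^{(1)}\subset L(f)\subset\gl(V)$ with factors $L(f)^{(1)}$, $L(f)/L(f)^{(1)}$ (trivial, dimension $m$), and $\gl(V)/L(f)\cong L(f)^{(1)}$; inserting $m-1$ subspaces to split the trivial middle factor into $m$ trivial composition factors, the total count is $m + (\text{composition length of }L(f)^{(1)}\text{ counted twice})$, and since $\Lambda^2(V)$ is irreducible as an $L(f)$-module (for $m=2$ it is one-dimensional trivial; for $m\ge 3$ it is the irreducible $\binom{m}{2}$-dimensional module — this requires a short separate argument, or a reference to the general module-theoretic results elsewhere in the paper), one gets exactly $m+2$ composition factors.

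The main obstacle I anticipate is the simplicity statement in~(1): showing $L(f)^{(1)}\cong\Lambda^2(V)$ is a simple Lie algebra of dimension $\binom{m}{2}$ when $m=3$ or $m\ge 5$, and understanding why $m=4$ is excluded. This is a structural fact about orthogonal Lie algebras in characteristic $2$ (the exterior square of the natural module, which is the honest orthogonal Lie algebra here since the standard $\so$ in characteristic $2$ has an extra center/derived subtlety); I would handle it by passing to an algebraic closure to reduce to $F=F^2$, invoking the matrix description, and either citing the known classification of simple modular Lie algebras of classical type or giving a direct ideal-chasing argument using the generators $d_jE_{ij}+d_iE_{ji}$ and their brackets $[E_{ij}^{\mathrm{sym}},E_{jk}^{\mathrm{sym}}]$, showing any nonzero ideal must contain one generator and hence all of them when $m\neq 4$; the failure at $m=4$ comes from the exceptional decomposition $\Lambda^2$ of a $4$-space splitting off invariant pieces (the two rulings, collapsed in characteristic $2$), exactly paralleling the $m=4$ anomaly already flagged for Theorem~\ref{intro31}(1).
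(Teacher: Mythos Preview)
Your approach is essentially the paper's: the map $\Gamma:v\otimes w\mapsto(u\mapsto f(v,u)w)$ is exactly what underlies your $S^2(V)$ and $\Lambda^2(V)$ identifications (Proposition~\ref{gam} and Theorem~\ref{lami}), your adjoint map $x\mapsto x+x^*$ for~(4) is precisely $\Gamma$ transported through the quotient $V\otimes V/S^2(V)\cong\Lambda^2(V)$ of Lemma~\ref{quot} (since $\Gamma(v\otimes w)^*=\Gamma(w\otimes v)$ when $f$ is symmetric), and the simplicity and irreducibility of $L(f)^{(1)}$ are handled in Theorem~\ref{sosimple1} by the same generator-and-bracket chase you outline. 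One correction: your parenthetical that the $\Lambda^2$-map is ``genuinely different from the symmetric version'' in characteristic~$2$ is backwards---here $\Lambda^2(V)\subseteq S^2(V)$ and $-1=1$, so your $\Lambda^2$-map is literally the restriction of your $S^2$-map, and its image has zero diagonal because the diagonal generators $v\otimes v$ of $S^2(V)$ are absent from $\Lambda^2(V)$.
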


We remark that if we let $m=4$ in Theorem \ref{intro41} then the
irreducible $L(f)$-module $L(f)^{(1)}$ need not be a simple Lie
algebra. The structure of the 6-dimensional Lie algebra
$L(f)^{(1)}$ depends on whether the discriminant of $f$ is a
square in $F$ or not. This is entirely analogous to what happens
to $L(f)$ itself when $\ell\neq 2$ and $f$ is non-degenerate and
symmetric, as indicated in \cite{B}, Chapter I, \S 6, Exercise
26(b). For a uniform treatment of both cases via current Lie
algebras see \cite{CS}.

Our results in characteristic not 2 are better described by means
of
$$
M(f)=\{y\in\gl(V)\,|\, f(yv,w)=f(v,yw)\text{ for all }v,w\in V\}.
$$
Note that $M(f)$ is an $L(f)$-module, regardless of the nature of
$f$ and $\ell$. However, if $\ell=2$ then $M(f)=L(f)$, so $M(f)$
plays no additional role in this case.

\begin{theorem} \label{intro51} Suppose that $\ell\neq 2$, that $m=2n$, and that $f$ is
non-degenerate and skew-symmetric. Then

(1) $M(f)$ is the orthogonal complement to $L(f)$ with respect to
the bilinear form $\vp:\gl(V)\times \gl(V)\to F$, given by
$\vp(x,y)=\tr(xy)$. Moreover, $M(f)$ consists, relative to
suitable basis of $V$, of all matrices
\begin{equation}
\label{maint2} \left(\begin{array}{cc}
   A & B\\
   C & A'
  \end{array}
\right),\, A,B,C\in\gl(n),\text{ where }B,C\text{ are
skew-symmetric}.
\end{equation}
Furthermore, $M(f)$ is isomorphic to $\Lambda^2(V)$ as
$L(f)$-module.

(2) $M(f)\cap\sl(V)$ consists of all matrices (\ref{maint2}) such
that $\tr(A)=0$ and is isomorphic to the kernel of the contraction
$L(f)$-epimorphism $\Lambda^2(V)\to F$ given by $v\wedge w\to
f(v,w)$.

(3) If $m>2$ and $\ell\nmid m$ then $M(f)\cap\sl(V)$ is an
irreducible $L(f)$-module of dimension ${{m}\choose{2}}-1$.

(4) If $m>2$ and $\ell\mid m$ then $M(f)\cap\sl(V)/\s$ is an
irreducible $L(f)$-module of dimension ${{m}\choose{2}}-2$.

(5) $L(f)$ is a simple Lie algebra, isomorphic to both
$\gl(V)/M(f)$ and $S^2(V)$ as $L(f)$-modules.

(6) The following are composition series of the $L(f)$-module
$\gl(V)$:
$$
0\subset \s\subset M(f)\cap\sl(V)\subset M(f)\subset \gl(V),\text{
if }m>2\text{ and }\ell|m,
$$
$$
0\subset M(f)\cap\sl(V)\subset M(f)\subset \gl(V),\text{ if
}m>2\text{ and }\ell\nmid m,
$$
$$
0\subset M(f)\subset \gl(V),\text{ if }m=2.
$$
In any case, $M(f)/M(f)\cap\sl(V)$ is the trivial $L(f)$-module.
\end{theorem}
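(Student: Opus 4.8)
The plan is to establish Theorem~\ref{intro51} by working with the trace form $\vp(x,y)=\tr(xy)$ on $\gl(V)$ and exploiting the classical decomposition $\gl(V)=L(f)\oplus M(f)$. First I would fix a symplectic basis of $V$ so that the Gram matrix of $f$ is $\left(\begin{smallmatrix} 0 & I_n \\ -I_n & 0\end{smallmatrix}\right)$, and then compute directly the matrix form of the conditions $f(yv,w)=f(v,yw)$ and $f(xv,w)=-f(v,xw)$. This yields that $L(f)$ consists of matrices $\left(\begin{smallmatrix} A & B \\ C & -A'\end{smallmatrix}\right)$ with $B,C$ symmetric, while $M(f)$ consists of such matrices with $B,C$ skew-symmetric (here $A'$ denotes the transpose). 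Since $\ell\neq 2$, symmetric and skew-symmetric matrices are complementary in $\gl(n)$, so $\gl(V)=L(f)\oplus M(f)$ as vector spaces, and a short calculation with block matrices shows $\tr(xy)=0$ whenever $x\in L(f)$, $y\in M(f)$; comparing dimensions gives that $M(f)=L(f)^{\perp}$ with respect to $\vp$. This proves part~(1) except for the module isomorphism $M(f)\cong\Lambda^2(V)$.

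For the module identifications, the key observation is that $V$ carries the $L(f)$-module structure via the inclusion $L(f)\hookrightarrow\gl(V)$, and the form $f$ provides an $L(f)$-isomorphism $V\cong V^*$. Hence $\gl(V)\cong V\otimes V^*\cong V\otimes V\cong S^2(V)\oplus\Lambda^2(V)$ as $L(f)$-modules (the last step valid since $\ell\neq 2$). I would then check that under the isomorphism $\gl(V)\cong V\otimes V$ induced by $f$, the summand $L(f)$ corresponds to $S^2(V)$ and $M(f)$ to $\Lambda^2(V)$: concretely, the endomorphism $v\otimes w\mapsto f(\cdot,v)w$ is skew-adjoint precisely when the tensor is symmetrized and self-adjoint when antisymmetrized (the sign swap coming from $f$ being alternating), which proves parts~(5) and the last sentence of~(1). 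Part~(2) then follows because the contraction map $\Lambda^2(V)\to F$, $v\wedge w\mapsto f(v,w)$, corresponds under this identification to the trace functional on $M(f)$ (both being the unique, up to scalar, $L(f)$-invariant functional), so its kernel is $M(f)\cap\sl(V)$; the trace-zero condition on the matrix description is immediate from the block form.

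For parts~(3) and~(4), irreducibility of $\Lambda^2(V)$ (resp.\ its quotient by $\s$) as an $L(f)=\sy(V)$-module is a standard fact in characteristic not dividing certain small numbers, but here it must be verified carefully over an arbitrary field of characteristic $\ell\neq 2$; I would either cite the analogous statement from the literature or give a direct argument using weight-space / highest-weight reasoning adapted to split forms, noting that the relevant module $\Lambda^2_0(V)$ is the ``second fundamental representation'' of the symplectic algebra, which stays irreducible as long as $\ell\nmid m$, and acquires exactly the one-dimensional submodule $\s$ when $\ell\mid m$ (since then $f\wedge\,\cdot\,$ lands inside the kernel of contraction). The dimension counts ${{m}\choose 2}-1$ and ${{m}\choose 2}-2$ are then bookkeeping. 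Part~(5)'s claim that $L(f)$ is simple as a Lie algebra is the symplectic case of the classical simplicity results, again valid for $\ell\neq 2$ and $m=2n\geq 2$ with the usual exception $m=2$ where $\sy(2)\cong\sl(2)$ is still simple when $\ell\neq 2$ — so no exclusion is needed. Finally part~(6) is assembled from the previous parts: $\gl(V)/M(f)\cong L(f)\cong S^2(V)$ is irreducible, $M(f)/(M(f)\cap\sl(V))\cong\Lambda^2(V)/\Lambda^2_0(V)\cong F$ is trivial one-dimensional, and the bottom of the filtration is either $\s\subset M(f)\cap\sl(V)$ (when $\ell\mid m$) or just $M(f)\cap\sl(V)$ (when $\ell\nmid m$), with the $m=2$ case degenerating to the two-term series since $\Lambda^2(V)=F$ there.

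The main obstacle I anticipate is part~(3)/(4): proving irreducibility of $\Lambda^2_0(V)$ over a genuinely arbitrary field without the comfort of characteristic-zero representation theory or even a perfect field. The cleanest route is probably to realize $\Lambda^2_0(V)$ as $M(f)\cap\sl(V)$ and show any nonzero $L(f)$-submodule, being stable under the adjoint action of all $\ad(x)$, $x\in L(f)$, must contain a root vector and then sweep out everything via brackets with the $\sl(2)$-triples sitting inside $\sy(V)$ — essentially re-proving a piece of Chevalley-group theory in this special case. I would keep that argument self-contained but terse, since it is the only genuinely delicate point and everything else is linear algebra plus functoriality of $S^2$ and $\Lambda^2$.
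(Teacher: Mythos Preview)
Your proposal is correct and follows essentially the same route as the paper: fix a symplectic basis, compute the block descriptions of $L(f)$ and $M(f)$, verify orthogonality under the trace form, use the $L(f)$-isomorphism $V\otimes V\to\gl(V)$ induced by $f$ to match $S^2(V)\leftrightarrow L(f)$ and $\Lambda^2(V)\leftrightarrow M(f)$, identify the contraction with the trace, and assemble the composition series from the pieces. The one place worth noting is the irreducibility in parts~(3) and~(4): the paper avoids weight-space or highest-weight language entirely and instead reuses the bare-hands computation from its simplicity proof of $\sy(2n)$, bracketing with the elements $\left(\begin{smallmatrix}0&I_n\\0&0\end{smallmatrix}\right)$ and $\left(\begin{smallmatrix}0&0\\I_n&0\end{smallmatrix}\right)$ to strip off the off-diagonal blocks, then invoking the irreducibility of the $\gl(n)$-modules of symmetric and skew-symmetric $n\times n$ matrices (proved separately by elementary means) to sweep out each block. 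Your ``root vector plus $\sl(2)$-triple'' sketch would work too, but the paper's version is deliberately more primitive, requiring no structure theory beyond explicit bracket calculations.
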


\begin{theorem} \label{intro61} Suppose that $\ell\neq 2$, that $m\geq 2$, and that $f$ is
non-degenerate and symmetric. Then

(1) $M(f)$ is the orthogonal complement to $L(f)$ with respect to
the bilinear form $\vp:\gl(V)\times \gl(V)\to F$, given by
$\vp(x,y)=\tr(xy)$. Moreover, there is a basis of $V$ relative to
which $f$ has Gram matrix $D=\mathrm{diag}(d_1,\dots,d_n)$ and,
relative to this basis, $M(f)$ consists of all $A\in\gl(m)$ such
that $d_iA_{ij}=d_j A_{ji}$. Furthermore, $M(f)$ is isomorphic to
$S^2(V)$ as $L(f)$-module.

(2) $M(f)\cap\sl(V)$ consists, relative to the above basis, of all
matrices $A\in\gl(m)$ such that $d_iA_{ij}=d_j A_{ji}$ and
$\tr(A)=0$, and is isomorphic to the kernel of the contraction
$L(f)$-epimorphism $S^2(V)\to F$ given by $vw\to f(v,w)$.

(3) If $m\geq 4$ and $\ell\nmid m$ then $M(f)\cap\sl(V)$ is an
irreducible $L(f)$-module of dimension~${{m+1}\choose{2}}-1$.

(4) If $m\geq 4$ and $\ell\mid m$ then $M(f)\cap\sl(V)/\s$ is an
irreducible $L(f)$-module of dimension ${{m+1}\choose{2}}-2$.

(5) If $m=3$ or $m\geq 5$ then $L(f)$ is a simple Lie algebra,
isomorphic to both $\gl(V)/M(f)$ and $\Lambda^2(V)$ as
$L(f)$-modules.

(6) The following are composition series of the $L(f)$-module
$\gl(V)$:
$$
0\subset \s\subset M(f)\cap\sl(V)\subset M(f)\subset \gl(V),\text{
if }m\geq 4\text{ and }\ell|m,
$$
$$
0\subset M(f)\cap\sl(V)\subset M(f)\subset \gl(V),\text{ if }m\geq
4\text{ and }\ell\nmid m.
$$
In any case, $M(f)/M(f)\cap\sl(V)$ is the trivial $L(f)$-module.
\end{theorem}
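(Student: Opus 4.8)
The plan is to establish the structural facts (1), (2) and (5) first, since (3), (4) and (6) will follow essentially formally from them together with standard facts about contraction maps and simplicity of orthogonal Lie algebras. For (1), I would diagonalize $f$ — possible since $\ell\neq 2$ — to get a basis with Gram matrix $D=\mathrm{diag}(d_1,\dots,d_m)$, and then simply compute: $y\in M(f)$ means $f(yv,w)=f(v,yw)$, which in coordinates reads $d_iA_{ij}=d_jA_{ji}$, while $x\in L(f)$ reads $d_iA_{ij}=-d_jA_{ji}$. One checks $\varphi(x,y)=\tr(xy)=\sum_{i,j}x_{ij}y_{ji}$ vanishes on $L(f)\times M(f)$ by pairing the $(i,j)$ and $(j,i)$ terms; since $\varphi$ is non-degenerate on $\gl(V)$ and $\dim L(f)+\dim M(f)=\binom{m}{2}+\binom{m+1}{2}=m^2$, we get $M(f)=L(f)^{\perp}$. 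The $L(f)$-module isomorphism $M(f)\cong S^2(V)$ I would obtain from the map $vw\mapsto$ the endomorphism $u\mapsto f(v,u)w+f(w,u)v$ (or the analogous symmetrization), checking it is $L(f)$-equivariant, injective, and dimension-counting; alternatively invoke that $\gl(V)\cong V\otimes V^*\cong V\otimes V$ as $L(f)$-modules via $f$, and $V\otimes V=S^2(V)\oplus\Lambda^2(V)$ as $L(f)$-modules since $\ell\neq 2$, with $L(f)\cong\Lambda^2(V)$ the ``skew'' part and $M(f)\cong S^2(V)$ the ``symmetric'' part.

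Given (1), part (2) is immediate: $M(f)\cap\sl(V)$ is the obvious codimension-one subspace (the matrices in $M(f)$ of trace zero), and under $M(f)\cong S^2(V)$ the trace functional corresponds, up to scalar, to the contraction $S^2(V)\to F$, $vw\mapsto f(v,w)$ — here I would verify that the composite $S^2(V)\to M(f)\xrightarrow{\tr}F$ is a nonzero $L(f)$-module map, hence must be the contraction up to scalar since $\mathrm{Hom}_{L(f)}(S^2(V),F)$ is at most one-dimensional (the coinvariants of $S^2(V)$). Then $M(f)\cap\sl(V)\cong\ker(\text{contraction})$. For (5), $L(f)=\so(f)$ is a classical simple Lie algebra when $m=3$ or $m\geq 5$ (this is standard for $\ell\neq 2$, excluding only the degenerate small-rank coincidences $m=1,2,4$), and $\gl(V)/M(f)\cong L(f)\cong\Lambda^2(V)$ as $L(f)$-modules follows from the decomposition $\gl(V)=L(f)\oplus M(f)$ (a vector-space direct sum, valid since $\ell\neq 2$ so that $\frac12(x\pm x^{\star})$ splits any endomorphism, where $\star$ is the $f$-adjoint).

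For (3) and (4): when $m\geq 4$, $S^2(V)$ with its contraction to $F$ has kernel an irreducible $L(f)$-module of dimension $\binom{m+1}{2}-1$ provided that dimension is not further reducible — this is the standard fact that the ``harmonic'' or traceless symmetric tensors form an irreducible module for the orthogonal group/algebra. The only subtlety is characteristic $\ell$: when $\ell\mid m$, the identity operator $I_V$ (a scalar) lies in $\ker(\text{contraction})$ because its ``trace'' $\sum d_i^{-1}\cdot(\text{something})$ — more precisely the image of $I$ under $S^2(V)\to F$ is $m$ times a unit — vanishes, so $\s\subset M(f)\cap\sl(V)$, and then $M(f)\cap\sl(V)/\s$ is the genuinely irreducible piece of dimension $\binom{m+1}{2}-2$; when $\ell\nmid m$, no such collapse occurs. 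I would prove irreducibility by the usual weight/root argument over the algebraic closure (reducing to $F=\overline F$ since irreducibility is unaffected by base field extension for these split situations, or by a direct argument exhibiting that any nonzero submodule contains a highest weight vector and hence everything), being careful that the excluded cases are exactly $m\leq 3$ and the scalar issue. Part (6) is then just assembling (1)–(5): the chain $0\subset\s\subset M(f)\cap\sl(V)\subset M(f)\subset\gl(V)$ has successive quotients $\s$ (trivial, dimension one, only present when $\ell\mid m$), $M(f)\cap\sl(V)/\s$ (irreducible by (4)), $M(f)/(M(f)\cap\sl(V))$ (trivial, dimension one, spanned by the image of $I$), and $\gl(V)/M(f)\cong L(f)$ (irreducible by (5)); dropping $\s$ when $\ell\nmid m$ gives the second series.

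The main obstacle I anticipate is part (3)/(4): pinning down irreducibility of the traceless symmetric tensors over an \emph{arbitrary} field of characteristic $\neq 2$, uniformly across split and non-split forms and across all residue characteristics dividing or not dividing $m$. The cleanest route is probably to first dispose of the scalar subtlety by a direct computation (the image of $\s$ under contraction is $m\cdot(\text{unit})$), then reduce irreducibility to the algebraically closed split case by a faithful-flatness / base-change argument — noting that $M(f)\cap\sl(V)$ and its possible quotient by $\s$ are defined over the prime field in the split case — and finally invoke (or reprove via raising/lowering operators $e_{ij}$ acting on tensors) the classical irreducibility statement for $\so_m$ acting on traceless $S^2$, which holds in all characteristics $\neq 2$ once $m\geq 4$ and one quotients by scalars when $\ell\mid m$. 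All other parts are bookkeeping with the explicit matrix realization and the $V\otimes V\cong S^2\oplus\Lambda^2$ decomposition.
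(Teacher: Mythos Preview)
Your proposal is correct and follows essentially the same architecture as the paper: diagonalize $f$, identify $M(f)=L(f)^\perp$ via the trace form and a dimension count, realize $M(f)\cong S^2(V)$ and $L(f)\cong\Lambda^2(V)$ through the $L(f)$-isomorphism $V\otimes V\to\gl(V)$ induced by $f$, match the trace with the contraction, extend scalars to reduce irreducibility and simplicity to the split case $D=I_m$, and then assemble the composition series. The one place where the paper is more concrete than your sketch is precisely the step you flagged as the main obstacle: rather than invoking weight theory, the paper proves the irreducibility of $M(I_m)\cap\sl(m)$ (mod $\s$ when $\ell\mid m$) by an elementary bracket computation with the basis $A_{ij}=e_{ij}+e_{ji}$ of $M(I_m)$ and the generators $E_{ij}=e_{ij}-e_{ji}$ of $L$, showing that any submodule not contained in the diagonals can be pushed by iterated $\mathrm{ad}\,E_{ij}$'s onto a single $A_{rs}$ and hence onto everything, while any diagonal submodule is forced to be scalar; this is exactly your ``raising/lowering operators $e_{ij}$'' alternative, and it avoids any appeal to highest-weight theory in positive characteristic.
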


As remarked earlier, if we take $m=4$ in Theorem \ref{intro61},
the structure of Lie algebra $L(f)$ depends on the nature of the
discriminant of $f$. On the other hand, if we take $m=2$ or $m=3$
in Theorem \ref{intro61}, the structure of the $L(f)$-module
$M(f)$ depends not just on $\ell$ but also on $F$ itself. See
\S\ref{dondeso} for details.

Much is known about the classical Lie algebras and their
representations, so a great deal of the results stated above is
already known. Indeed, note that $f$ induces an isomorphism
$V\cong V^*$ of $L(f)$-modules, so $\gl(V)\cong V^*\otimes V\cong
V\otimes V$, where $V\otimes V/S^2(V)\cong \Lambda^2(V)$, with
$V\otimes V=S^2(V)\oplus \Lambda^2(V)$ if $\ell\neq 2$. Let
$\Omega:V\otimes V\to F$ be the contraction $L(f)$-epimorphism
given by $v\otimes w\mapsto f(v,w)$.

Suppose $F=\C$. If $f$ is skew-symmetric and $m\geq 4$ then
$V\otimes V$ has the following decomposition into irreducible
$L(f)$-submodules:
$$
V\otimes V=S^2(V)\oplus (\ker\Omega\cap \Lambda^2(V))\oplus U,
$$
where $S^2(f)\cong L(f)$ and $U$ is trivial. If $f$ is symmetric,
with $m=3$ or $m\geq 5$, then $V\otimes V$ decomposes as follows
into irreducible $L(f)$-submodules:
$$
V\otimes V=\Lambda^2(V)\oplus (\ker\Omega\cap S^2(V))\oplus W,
$$
where $\Lambda^2(f)\cong L(f)$ and $W$ is trivial. We refer the
reader to \cite{FH} for these details, as well as for further
information, in terms of Weyl modules, on higher tensor powers of
$V$.

It follows from Theorems \ref{intro31}-\ref{intro61} that the
above statements remain valid for any field of characteristic 0,
but cease to be true if $\ell|2m$, the more substantial failure
occurring when $\ell=2$. In prime characteristic, the ideal
structure of $L(f)$ is described in detail in \cite{B}, Chapter I,
\S 6, Exercises 25 and 26, although $\ell\neq 2$ is required in
the orthogonal case. Given the mistakes found in \cite{B} and that
full information on the $L(f)$-submodule structure of $\gl(V)$,
that includes all possible cases of $\ell,F,f$ and $m$, does not
seem to be available in the literature, we decided to provide a
self-contained account of it, including complete proofs, and
requiring no prior knowledge of Lie algebras.

We begin in \S\ref{se-pre}, which includes all one needs to know
about Lie algebras to read this paper. This material can be
covered during the first week of a course on the subject.

We now refer to \cite{H}, \S 1, Exercise 10, where we are required
to justify the complex isomorphisms $\sy(4)\cong\so(5)$ and
$\sl(4)\cong\so(6)$. How is one supposed to prove this after a
single week of lecturing? Comparing multiplication tables is one
option, although very tiring and time consuming, as these Lie
algebras have dimensions 10 and 15, respectively. The use of
Dynkin diagrams must postponed until much later, so one is
essentially led to use representations in some way or another. In
the case of $\sl(4)\cong\so(6)$ there is the standard argument
involving the action of $\sl(4)$ on $\Lambda^2(W)$, where $W$ is
the natural module of $\sl(4)$. From $\sl(4)\cong\so(6)$ one then
obtains $\sy(4)\cong\so(5)$ by restriction to $\sy(4)$. This
requires prior knowledge of exterior powers, which might not be
available to everyone at the beginning (or the end) of a course on
Lie algebras, especially to undergraduate students.

In \S\ref{sec1} we furnish an extremely elementary and direct
proof of $\sy(4)\cong\so(5)$ whenever $\ell\neq 2$ and $F=F^2$
(these conditions are clearly necessary) as part of a general and
canonical imbedding $\sy(2n)\hookrightarrow \so(2n^2-n-1)$
whenever $\ell\nmid 2n$ and $F=F^2$. The material from
\S\ref{sec1} is really a special case of our study of the
$L(f)$-module $M(f)$ in the symplectic case, but we present it
first to make the isomorphism $\sy(4)\cong\so(5)$ available
immediately after the first rudiments on Lie algebras. If we had
to single out a key ingredient behind the isomorphism
$\sy(4)\cong\so(5)$ it would be the non-degenerate
$\gl(V)$-invariant symmetric bilinear form~$\vp:\gl(V)\times
\gl(V)\to F$ used in Theorems \ref{intro51} and \ref{intro61}.

In \S\ref{sec2} we give an elementary proof of $\sl(4)\cong\so(6)$
valid when $\ell\neq 2$ and $F=F^2$ (these conditions are, again,
necessary). It uses the same idea of the isomorphism
$\sy(4)\cong\so(5)$, the presence of a non-degenerate invariant
symmetric bilinear form, although in this case a minimal amount of
calculations are needed in order to avoid the use of exterior
powers. As with classical method, the simplicity of $\sl(4)$ is
required. For completeness, an account of the ideal structure of
$\gl(m)$ is given in~\S\ref{sec12}. This can be found in \cite{B},
Chapter I, \S 6, Exercise 24.

In \S\ref{secbas} we describe basic properties of $M(f)$ for an
arbitrary bilinear form $f$, with emphasis on the case when $f$ is
non-degenerate, symmetric or alternating, while \S\ref{secid}
justifies the various identifications made in Theorems
\ref{intro31}-\ref{intro61} concerning $L(f)$-modules.

The last five sections are devoted to demonstrate the
irreducibility aspects of Theorems~\ref{intro31}-\ref{intro61},
depending on whether $\ell=2$ or not and the nature of $f$.

\section{Preliminaries}\label{se-pre}

The notation introduced in this section will be maintained
throughout the entire paper.

Let $F$ be an arbitrary field of characteristic $\ell$. Thus
$\ell$ is zero or a prime. All vector spaces are assumed to be
finite dimensional over $F$ unless otherwise mentioned. We fix a
vector space $V$ of dimension $m\geq 1$.

\subsection{Lie algebras} A  Lie algebra is a vector space $L$ together with a bilinear map
$[~,~]:L\times L\to L$, called bracket or commutator, satisfying:

(L1) $[x,x]=0$ for all $x\in L$

(L2) $[x,[y,z]]+[y,[z,x]]+[z,[x,y]]=0$ for all $x,y,z\in L$.

Any associative algebra $A$ gives rise to a Lie algebra whose
underlying vector space is $A$ itself, with commutator
$$
[xy]=xy-yx,\quad x,y\in A.
$$
The Lie algebras corresponding to $M_m(F)$ and $\End(V)$ will be
denoted by $\gl(m)$ and $\gl(V)$, respectively, and called general
linear Lie algebras.

The canonical matrices $e_{ij}$, $1\leq i,j\leq m$, form a basis
of $M_m(F)$ and multiply as follows:
$e_{ij}e_{kl}=\delta_{jk}e_{il}$. Thus, we have the following
multiplication table in $\gl(m)$:
\begin{equation}
\label{mult} [e_{ij},e_{kl}]=\delta_{jk}e_{il}-\delta_{il}e_{kj}.
\end{equation}

Given Lie algebras $L_1$ and $L_2$, a Lie homomorphism (resp.
isomorphism) is a linear homomorphism (resp. isomorphism)
$T:L_1\to L_2$ satisfying $$T([x,y])=[T(x),T(y)],\quad x,y\in
L_1.$$ For instance, if $\mathcal B$ is a basis of $V$ then the
map $M_{\mathcal B}:\gl(V)\to \gl(m)$, which sends every
$x\in\gl(V)$ to its matrix $M_{\mathcal B}(x)$ relative to
${\mathcal B}$, is a Lie isomorphism.

Given a Lie algebra $L$, an ideal (resp. subalgebra) is a subspace
$K$ of $L$ satisfying $[x,y]\in K$ for all $x\in L$ and $y\in K$
(resp. all $x,y\in K$). We say that $L$ is simple if $\dim(L)>1$
and the only ideals of $L$ are $0$ and $L$.

For instance, $L^{(1)}=[L,L]$, the span of all $[x,y]$ with
$x,y\in L$, is an ideal of $L$, as well as
$L^{(2)}=[L^{(1)},L^{(1)}]$, etc.

The special linear Lie algebra $\sl(V)$, consisting of all
traceless endomorphisms of~$V$, is an ideal of $\gl(V)$,
corresponding to $\sl(m)$, the ideal of $\gl(m)$ of all traceless
$m\times m$ matrices, under the isomorphism $M_{\mathcal B}$.

We will denote by $\s$ the ideal of $\gl(V)$ (resp. $\gl(m)$) of
all scalar endomorphisms (resp. matrices). Note that $\s\subseteq
\sl(V)$ if and only if $\ell|m$.

\subsection{Representations and modules}\label{subre} Let $L$ be a Lie algebra.
A representation of $L$ on a vector space $W$ is a Lie
homomorphism $R:L\to\gl(W)$, in which case we refer to $W$ as an
$L$-module and write $x\cdot w$ or simply $xw$ to mean $R(x)w$.
Note that the map $L\times W\to W$ is bilinear and satisfies
\begin{equation}\label{axmod}
[x,y]w=xyw-yxw,\quad x,y\in L, w\in W.
\end{equation}
Conversely, any bilinear map map $L\times W\to W$ satisfying
(\ref{axmod}) gives rise to a representation $R:L\to\gl(W)$
defined by $R(x)w=xw$.

Let $W$ be an $L$-module. We say that $W$ is faithful if its
associated representation is injective. An $L$-submodule of $W$ is
a subspace $U$ of $W$ such that $xu\in U$ for all $x\in L$ and
$u\in U$. We refer to $W$ as irreducible if $W$ is non-zero and
its only submodules are 0 and $W$. For instance, the adjoint
module of a Lie algebra $L$ is $W=L$, where $x\cdot w=[x,w]$. This
is irreducible if and only if $L$ is a simple Lie algebra or
$\dim(L)=1$.

Note that the dual space $W^*$ becomes an $L$-module via
$$(x\cdot \alpha)(w)=\alpha(-x\cdot w),\quad x\in L,\,\alpha\in W^*,\, w\in W.$$
Using annihilators we easily see that $W$ is irreducible if and
only if so is $W^*$.

Let $R:L\to\gl(W)$ and $R^*:L\to\gl(W^*)$ be the representations
associated to $W$ and $W^*$. Let ${\mathcal C}$ be a basis of $W$
and ${\mathcal C}^*$ is its dual basis. Then the matrix
representations associated to $W$ and $W^*$ with respect to
${\mathcal C}$ and ${\mathcal C}^*$ are related by:
$$
M_{{\mathcal C}^*}(R^*(x))=-M_{{\mathcal C}}(R(x))',\quad x\in L,
$$
where $A'$ denotes the transpose a matrix $A$.

A homomorphism (resp. isomorphism) of $L$-modules is a linear
homomorphism (resp. isomorphism) $T:W_1\to W_2$ satisfying
$$
T(xw)=xT(w),\quad x\in L,w\in W_1.
$$

\subsection{Classical Lie algebras}\label{subcla} We fix throughout a bilinear form $f:V\times V\to F$ and set
$$
L(f)=\{x\in\gl(V)\,|\, f(xv,w)=-f(v,xw)\text{ for all }v,w\in V\},
$$
$$
M(f)=\{y\in\gl(V)\,|\, f(yv,w)=f(v,yw)\text{ for all }v,w\in V\}.
$$
Note that $L(f)=M(f)$ if $\ell=2$.
\begin{lemma}
\label{lem11} $L(f)$ is a subalgebra of $\gl(V)$ and $M(f)$ is an
$L(f)$-submodule of~$\gl(V)$.
\begin{proof} Let $x\in L(f)$, $y\in M(f)$. We wish to see that $[xy]\in M(f)$. If $v,w\in V$ then
$$
\begin{aligned}
f([xy]v,w)&=f(xyv,w)-f(yxv,w)\\
&=-f(yv,xw)-f(xv,yw)\\
&=-f(v,yxw)+f(v,xyw)\\
&=f(v,[xy]w).
\end{aligned}
$$
The proof that $L(f)$ is a subalgebra of $\gl(V)$ is entirely
analogous.
\end{proof}
\end{lemma}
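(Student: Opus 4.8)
The plan is to verify both assertions by direct computation with the defining adjointness conditions. First I would observe that $L(f)$ and $M(f)$ are visibly subspaces of $\gl(V)$, since each of the conditions $f(xv,w)=-f(v,xw)$ and $f(yv,w)=f(v,yw)$ is linear in the operator; and that the $L(f)$-module structure on $\gl(V)$, hence on any subspace of it, is simply the restriction of the adjoint action, so that $x\cdot y=[x,y]=xy-yx$. Thus it suffices to show that $L(f)$ is closed under the bracket and that $[x,y]\in M(f)$ whenever $x\in L(f)$ and $y\in M(f)$.

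For the submodule statement, I would take $x\in L(f)$, $y\in M(f)$ and arbitrary $v,w\in V$ and expand $f([x,y]v,w)=f(xyv,w)-f(yxv,w)$. In the first term one moves $x$ across the form using $x\in L(f)$ and then moves $y$ across using $y\in M(f)$; in the second term one performs the analogous pair of transfers. Collecting the results yields $f([x,y]v,w)=-f(v,yxw)+f(v,xyw)=f(v,[x,y]w)$, which is exactly the defining relation for membership in $M(f)$. For the subalgebra statement one runs the identical computation with $y\in L(f)$ in place of $y\in M(f)$; the only change is an extra sign at the step where $y$ is transferred, and the computation terminates with $f([x,y]v,w)=-f(v,[x,y]w)$, so $[x,y]\in L(f)$.

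I do not expect any genuine obstacle here: the whole argument is a short manipulation of bilinearity together with the two adjointness relations, the only point demanding a moment's care being the bookkeeping of signs as $x$ and $y$ are shuttled between the two slots of $f$. Accordingly I would write out the $M(f)$ case in full and simply remark that the proof that $L(f)$ is a subalgebra is entirely analogous.
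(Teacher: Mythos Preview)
Your proposal is correct and follows essentially the same approach as the paper: the paper performs exactly the four-line computation you describe for $[x,y]\in M(f)$ and then declares the subalgebra statement ``entirely analogous,'' just as you plan to do. The only addition in your write-up is the preliminary remark that $L(f)$ and $M(f)$ are subspaces, which the paper leaves implicit.
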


It will be useful to have a matrix version of $L(f)$ and $M(f)$
available. Given $A\in\gl(m)$, we set
\begin{equation}\label{mjo}
L(A)=\{X\in\gl(m)\,|\, X'A=-AX\}\text{ and }M(A)=\{Y\in\gl(m)\,|\,
Y'A=AY\}.
\end{equation}

Let ${\mathcal B}=\{v_1,\dots,v_m\}$ be a basis of $V$ and suppose
that $A\in\gl(m)$ is the Gram matrix of $f$ relative to ${\mathcal
B}$, that is,
$$
A_{ij}=f(v_i,v_j),\quad 1\leq i,j\leq m.
$$
Then $M_{\mathcal B}$ sends $L(f)$ onto $L(A)$ and $M(f)$
onto $M(A)$.

Two matrices $A,B\in\gl(m)$ are said to be congruent if there is
$S\in\GL_m(F)$ such that
$$
S'AS=B,
$$
in which case the map $L(A)\to L(B)$ given by $X\mapsto S^{-1}XS$
is a Lie isomorphism.

Suppose $f$ is non-degenerate and alternating. In this case (see
\cite{K}, Theorem 19) $m=2n$ and there is a basis ${\mathcal B}$
of $V$ relative to which $f$ has Gram matrix
\begin{equation}\label{defjo}
J=\left(%
\begin{array}{cc}
  0 & I_n \\
  -I_n & 0 \\
\end{array}%
\right).
\end{equation}
We write $\sy(2n)=L(J)$ and refer to $L(f)$ as the symplectic Lie
algebra. An easy computation based on (\ref{mjo}) and (\ref{defjo}) reveals that
$$L(J)=\left\{\left(\begin{array}{cc}
   A & B\\
   C & -A'
  \end{array}
\right)\,\big |\,A,B,C\in\gl(n),\text{ where }B,C\text{ are
symmetric}\right\}
$$
and
$$M(J)=\left\{\left(\begin{array}{cc}
   A & B\\
   C & A'
  \end{array}
\right)\,\big |\,A,B,C\in\gl(n),\text{ where }B,C\text{ are
skew-symmetric}\right\}.
$$
In particular,
$$
\dim L(J)={{m+1}\choose{2}},\text{ and if }\ell\neq 2\text{ then }
\dim M(J)={{m}\choose{2}}.
$$

Suppose next that $f$ is non-degenerate and symmetric. If $f$ is
alternating then necessarily $\ell=2$ and $L(f)$ is the symplectic
Lie algebra considered above. If $f$ is non-alternating then by
\cite{K}, Theorems 18 and 20, there is a basis ${\mathcal B}$ of
$V$ relative to which $f$ has diagonal Gram matrix
$D=\mathrm{diag}(d_1,\dots,d_m)$, $d_i\neq 0$. Another calculation
based on (\ref{mjo}) shows that
$$
L(D)=\{A\in\gl(m)\,|\, d_iA_{ij}+d_jA_{ji}=0\text{ for all } 1\leq
i,j\leq m\},
$$
and
$$
M(D)=\{A\in\gl(m)\,|\, d_iA_{ij}-d_jA_{ji}=0\text{ for all } 1\leq
i,j\leq m\}.
$$
In particular, if $f$ is non-degenerate, symmetric and
non-alternating then
$$
\dim L(f)={{m}\choose{2}}\text{ if }\ell\neq 2,\text{ }\dim
L(f)={{m+1}\choose{2}}\text{ if }\ell=2,
$$
and
$$
\dim M(f)={{m+1}\choose{2}}.
$$
Moreover, if $F=F^2$ (i.e., every element of $F$ is a square) then
$f$ admits $I_m$ as Gram matrix, in which case we refer to $L(f)$
as the orthogonal Lie algebra and write $\so(m)=L(I_m)$. Clearly,
$\so(m)$ consists of all skew-symmetric matrices of~$\gl(m)$ and
$M(I_m)$ of all symmetric matrices of $\gl(m)$.

A matrix $A\in\gl(m)$ is said to be alternating if
$$A_{ij}=-A_{ji}\text{
and  }A_{ii}=0,\quad 1\leq i,j\leq m.
$$
By above, any two invertible alternating matrices are congruent.
Provided $F=F^2$, so are any two invertible symmetric
non-alternating matrices.

Suppose that $W$ is a $d$-dimensional module for a Lie algebra
$L$, where $d\geq 1$. A bilinear form $\phi:W\times W\to F$ is
said to be $L$-invariant if the representation $R:L\to\gl(W)$
associated to $W$ satisfies $R:L\to L(\phi)$, that is, if
$$
\phi(x\cdot u,v)+\phi(u,x\cdot v)=0,\quad x\in L,\, u,v\in W.
$$
In this case, if the $L$-module $W$ is faithful and $\phi$ is
non-degenerate we obtain an imbedding $R:\g\to\sy(d)$ (resp.
$R:\g\to\so(d)$) provided $\phi$ is alternating (resp. symmetric
and non-alternating, and $F=F^2$).

Let $T:W\to W^*$ be a linear map. By definition, $T$ is an
$L$-homomorphism if and only if the associated bilinear form
$\phi:V\times V\to F$, given by $\phi(u,v)=T(u)(v)$, is
$L$-invariant, in which case $T$ is an isomorphism if and only if
$\phi$ is non-degenerate.

For instance, if $f$ is non-degenerate then $V\cong V^*$ as
$L(f)$-modules via the map $v\mapsto f(v,-)$, since $f$ is
$L(f)$-invariant.

\subsection{A trace form}\label{subtra} We fix throughout the bilinear form $\vp:\gl(V)\times \gl(V)\to F$
given by
\begin{equation}
\label{tra} \vp(x,y)=\tr(xy),\quad x,y\in\gl(V).
\end{equation}
It is well-known and easy to see that $\vp$ is symmetric and
non-degenerate.

\begin{lemma}
\label{lem12} The bilinear form $\vp$ is $\gl(V)$-invariant.
\begin{proof} Let $x,y,z\in\gl(V)$. Then
$$
\varphi(z\cdot x,y)+\varphi(x,z\cdot
y)=\tr([zx]y)+\tr(x[zy])=\tr(zxy-xzy)+\tr(xzy-xyz)=0.
$$
\end{proof}
\end{lemma}
By abuse of notation we will also denote by $\vp$ the
$\gl(m)$-invariant non-degenerate symmetric bilinear form
$\gl(m)\times \gl(m)\to F$ defined by $\vp(A,B)=\tr(AB)$. We have
$$
\s=\sl(m)^\perp,\quad \sl(m)=\s^\perp.
$$
Moreover, if $\ell\nmid m$ then
$$
\gl(m)=\sl(m)\perp\s.
$$

We let $\mathrm{Alt}(m)$ and $\mathrm{Sym}(m)$ stand for the
spaces of all alternating and symmetric $m\times m$ matrices,
respectively. Consider the linear map
$\Psi:\gl(m)\to\mathrm{Alt}(m)$ given by $A\mapsto A-A'$. Since
$\ker(\Psi)=\mathrm{Sym}(m)$, the rank-nullity formula implies
that $\mathrm{im}(\Psi)=\mathrm{Alt}(m)$, i.e., $\Psi$ is
surjective.

Observe next that if $A,B\in \gl(m)$ then
\begin{equation}
\label{trazz} \tr(AB)-\tr(A'B')=\tr(AB)-\tr(B' A')= \tr(AB)-\tr((A
B)')=0.
\end{equation}
Suppose $C\in \mathrm{Alt}(m)$ and $B\in \mathrm{Sym}(m)$. Since
$\Psi$ is surjective, we have $C=A-A'$ for some $A\in\gl(m)$, so
by (\ref{trazz})
\begin{equation}
\label{skew} \vp(C,A)=\tr(CB)=\tr((A-A')B)=\tr(AB-A'B')=0.
\end{equation}
Combining (\ref{skew}) with the non-degeneracy of $\vp$,
dimension considerations show that
$$
\mathrm{Alt}(m)^\perp=\mathrm{Sym}(m),\quad
\mathrm{Sym}(m)^\perp=\mathrm{Alt}(m).
$$
Moreover, if $\ell\neq 2$ then
$$
\gl(m)=\mathrm{Alt}(m)\perp\mathrm{Sym}(m).
$$

\subsection{Weights} Suppose $H$ and $W$ are vector spaces and that $H$ acts on $W$,
i.e., there is a bilinear map $H\times W\to W$, say $(h,w)\mapsto
hw$. Then every $\al\in H^*$ gives rise to the subspace, say
$W_\al$, of $W$ defined by
$$W_\al=\{w\in W\,|\, h w=\al(h)w\text{ for all }h\in H\}.$$
We say that $\al$ is weight for the action of $H$ on $W$ if
$W_\al\neq 0$. Note that if $T:W\to W'$ is an isomorphism of
$L$-modules for a Lie algebra $L$ and $H$ is a subalgebra of $L$
then $T(W_\al)=W'_\al$ for every $\al\in H^*$. In particular, the
weights for the actions of $H$ on $W$ and $W'$ are identical.

\begin{note}{\rm As an illustration, consider the irreducible $\sl(V)$-module
$V$ and the diagonal subalgebra $H$ of $\sl(V)$. The weights of
$H$ acting on $V$ are $\varepsilon_1,\dots,\varepsilon_m$, where
$\varepsilon_i:H\to F$ is the $i$th coordinate function, given by
$\varepsilon_i(h)=h_{ii}$. The weights of $H$ acting on~$V^*$ are
$-\varepsilon_1,\dots,-\varepsilon_m$. Thus, if $m>2$ and
$\ell\neq 2$ then $V\not\cong V^*$. If $m=2$ then $H$ has
the same weights $\varepsilon_1,-\varepsilon_1$ acting on
$V$ and $V^*$ and, in fact, $V\cong V^*$. If $m>2$ but $\ell=2$
then $H$ has the same weights on $V$ and $V^*$. However, in this
case $V\not\cong V^*$, otherwise $V\otimes V\cong \gl(V)$, which
contradicts the $\sl(V)$-submodule structures of $V\otimes V$ and
$\gl(V)$.

An alternative way to decide when $V\cong V^*$ is to
look at the automorphism $A\mapsto -A'$ of $\sl(m)$. It is given
by conjugation by a fixed $S\in\GL_m(F)$ if and only if $m\leq 2$.

The above phenomenon when $\ell=2$ is impossible for $F=\C$: an
irreducible module for a complex semisimple Lie algebra is
characterized by the weights of a Cartan subalgebra.}
\end{note}

\section{Viewing $\gl(2n)$ as a module for $\sy(2n)$}
\label{sec1}

We assume throughout this section that $\ell\neq 2$ and $m=2n$,
and set $W=\gl(2n)$. Recalling the matrix $J\in\gl(2n)$ defined in
(\ref{defjo}), we also set $L=L(J)=\sy(2n)$. Note that $W$ is an
$L$-module via $x\cdot w=[x,w]$. Recall, as well, the non-degenerate
$L$-invariant symmetric bilinear form $\vp:W\times W\to F$,
defined in (\ref{tra}), and the $L$-submodule $M=M(J)$, defined in
(\ref{mjo}).

\begin{theorem}
\label{purr} Suppose that $\ell\nmid 2n$. Then the $L$-module $W$
has the following orthogonal decomposition into $L$-submodules:
\begin{equation}
\label{lea1} W=L\perp (M\cap\sl(2n))\perp\s,
\end{equation}
where $M=L^\perp$ has the matrix description given in
\S\ref{subcla}.

Moreover, $M\cap\sl(2n)$ is an $L$-module of dimension $2n^2-n-1$,
which is faithful if $n\geq 2$. In particular, if $F=F^2$ and
$n\geq 2$ then $M\cap\sl(2n)$ induces an imbedding
$\sy(2n)\hookrightarrow \so(2n^2-n-1)$, which is an isomorphism
$\sy(4)\to\so(5)$ when $n=2$.
\end{theorem}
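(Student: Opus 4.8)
The plan is to establish the four assertions of Theorem \ref{purr} in sequence, using only the trace form $\vp$ and the matrix descriptions from \S\ref{subcla}.

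First I would prove the orthogonal decomposition (\ref{lea1}). By Lemma \ref{lem12} and the discussion in \S\ref{subcla}, for $x\in L$ the adjoint map $\mathrm{ad}(x)$ is $\vp$-skew, so $L^\perp$ is an $L$-submodule. The identity $X'J=-JX$ defining $L=L(J)$ and $Y'J=JY$ defining $M=M(J)$ show, via $\vp(X,Y)=\tr(XY)$ and the cyclicity of trace together with (\ref{trazz})-type manipulations, that $\vp(X,Y)=0$ for $X\in L$, $Y\in M$; comparing dimensions $\dim L(J)=\binom{m+1}{2}$ and $\dim M(J)=\binom{m}{2}$ (computed in \S\ref{subcla}) against $\dim\gl(2n)=m^2$ gives $M=L^\perp$ and $\gl(2n)=L\oplus M$. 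That this sum is orthogonal is then automatic. Next, since $\ell\nmid 2n=m$, we have $\gl(2n)=\sl(2n)\perp\s$ from \S\ref{subtra}, and $\s\subseteq M$ because scalar matrices obviously lie in $M(J)$; intersecting with $M$ yields $M=(M\cap\sl(2n))\perp\s$, and splicing the two orthogonal decompositions gives (\ref{lea1}). The dimension of $M\cap\sl(2n)$ is then $\binom{m}{2}-1=2n^2-n-1$.

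For faithfulness when $n\geq 2$: the kernel of the $L$-action on $M\cap\sl(2n)$ is an ideal $K$ of $L=\sy(2n)$ consisting of elements $x$ with $[x,M\cap\sl(2n)]=0$. I would argue that $\mathrm{ad}$ of any nonzero $x\in L$ fails to kill all of $M\cap\sl(2n)$ by a direct commutator computation: writing $x$ in block form as in \S\ref{subcla} and pairing it against suitably chosen elements of $M\cap\sl(2n)$ (again in block form, with $B,C$ skew-symmetric and $\tr A=0$), one checks that $[x,y]=0$ for all such $y$ forces every block of $x$ to vanish. Concretely one can use that $M\cap\sl(2n)$ contains $J$ itself and enough "off-diagonal" skew blocks to separate points; the hypothesis $n\geq 2$ is exactly what makes the space $\mathrm{Alt}(n)$ of skew $n\times n$ matrices nonzero, which is the crux. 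This block-chasing is the step I expect to be the main obstacle, since it is the only place genuine computation enters and one must be careful that the characteristic is not $2$ (so that $\mathrm{Sym}(n)\neq\mathrm{Alt}(n)$ and the pairing arguments go through).

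Finally, granting faithfulness, the $L$-invariance of $\vp$ restricts to a nonzero invariant symmetric bilinear form on $M\cap\sl(2n)$; since $\s\perp(M\cap\sl(2n))$ and $\gl(2n)=L\perp(M\cap\sl(2n))\perp\s$ with $\vp$ non-degenerate on $\gl(2n)$, the restriction of $\vp$ to $M\cap\sl(2n)$ is non-degenerate. When $F=F^2$ it is symmetric and non-alternating (as $\ell\neq2$), so by the criterion recorded at the end of \S\ref{subcla} the faithful module $M\cap\sl(2n)$ induces an imbedding $\sy(2n)\hookrightarrow\so(2n^2-n-1)$. For $n=2$ both sides have dimension $10$: indeed $\dim\sy(4)=\binom{5}{2}=10$ and $\dim\so(5)=\binom{5}{2}=10$, so an injective Lie homomorphism between them is an isomorphism $\sy(4)\to\so(5)$.
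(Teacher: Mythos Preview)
Your plan matches the paper's proof almost step for step: the paper also shows $M\subseteq L^\perp$ by a block trace computation using (\ref{trazz}) and (\ref{skew}), equates dimensions to conclude $M=L^\perp$ and $W=L\perp M$, splits off $\s$ from $M$ using $\ell\nmid 2n$, and then verifies faithfulness by the very block-commutator check you describe (first taking $b=c=0$ to force $d$ scalar, then varying skew $b$ and $c$ to kill $d,e,f$). One small slip to fix: $J$ lies in $L$, not in $M$ (indeed $J'J=-J^2$ while $JJ=J^2$, and these differ when $\ell\neq 2$), so you cannot use $J$ itself as a test element in $M\cap\sl(2n)$; the ``off-diagonal skew blocks'' you mention are what actually do the work, and they suffice.
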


\begin{proof} We claim that $M=L^\perp$. Indeed, let $x\in M$
and $y\in L$. As indicated in \S\ref{subcla}, there exist
$a,b,c,d,e\in\gl(n)$ such that
\begin{equation}
\label{diab} x=\left(\begin{array}{cc}
   a & b\\
   c & a'
  \end{array}
\right),\, y=\left(\begin{array}{cc}
   d & e\\
   f & -d'
  \end{array}
\right),\text{ with }b,c\text{ skew-symmetric }\text{ and }
e,f\text{ symmetric}.
\end{equation}
It follows from (\ref{trazz}) and (\ref{skew}) that
$$
\vp(x,y)=\tr(xy)=\tr(ad+bf+ce-a'd')=0.
$$
This proves $M\subseteq L^\perp$. The matrix descriptions of $L$
and $M$ show that $W=L\oplus M$. On the other hand, the
non-degeneracy of $\varphi$ implies $\dm\, L+\dm\,L^\perp=\dm\,
W$. Since $M\subseteq L^\perp$ and they have the same dimension,
it follows that $M=L^\perp$. We have shown
\begin{equation}
\label{diab2} W=L\perp M.
\end{equation}
The matrix description of $M$ makes it clear that
$$\dim M\cap \sl(2n)=2n^2-n-1,$$
and the condition $\ell\nmid 2n$ implies
\begin{equation}
\label{diab3} M=(M\cap\sl(2n))\perp\s.
\end{equation}
Substituting (\ref{diab3}) into (\ref{diab2}) yields (\ref{lea1}).

Since $\sl(2n)$ and $\s$ are ideals of $\gl(2n)$, it follows from
Lemma \ref{lem11} that all components of (\ref{lea1}) are
$L$-submodules of $W$. Since $\vp$ is symmetric and non-degenerate
on $W$, so its restriction to each component of (\ref{lea1}). As
explained in \S\ref{subcla}, this yields an imbedding
$\sy(2n)\hookrightarrow \so(2n^2-n-1)$, provided $F=F^2$ and
$M\cap \sl(2n)$ is a faithful $L$-module. This imbedding becomes
an isomorphism $\sy(4)\to\so(5)$ when $n=2$, as these Lie algebras
are both 10-dimensional.

It only remains to show that $M\cap \sl(2n)$ is a faithful
$L$-module whenever $n\geq 2$. For this purpose, let $y\in L$ be
as in (\ref{diab}) and suppose that $[x,y]=0$ for all $x\in M\cap
\sl(2n)$ as in (\ref{diab}). Setting $b=0=c$, it follows that
$[d,a]=0$ for all $a$ in $\gl(n)$, whence $d$ is scalar. Letting
$a=0=c$, we see that $2db=0$ and $bf=0$ for all skew-symmetric
$b\in\gl(n)$, so $d=0=f$. Finally, taking $a=0=b$, we get $ce=0$
for all skew-symmetric $c\in\gl(n)$, whence $e=0$. This completes
the proof.
\end{proof}

\begin{note}{\rm The irreducibility of the components of (\ref{lea1}) is discussed
in \S\ref{donde}.

Here we sketch an indirect argument of the irreducibility and
faithfulness of $M\cap\sl(2n)$ when $n\geq 2$ and $F=\C$.

Let $H$ be the diagonal subalgebra of $L$. For $1\leq i\leq n$
consider the linear functional $\varepsilon_i:H\to F$ given by
$\varepsilon_i(h)=h_{ii}$. We easily verify that the weights for
the action of $H$ on $M\cap\sl(2n)$ are the sums of 2 distinct
members taken from
$\{\varepsilon_{1},\dots,\varepsilon_{n},-\varepsilon_{1},\dots,-\varepsilon_{n}\}$.
These are the same weights for the action of $H$ on $V(\la_2)$,
where $\la_2$ is the second fundamental module for $L$. But
$$\dm(M \cap\sl(2n))=2n^2-n-1=\dm(V(\la_2)),$$
so $M \cap\sl(2n)\cong V(\la_2)$ is irreducible.

Since $L$ is a simple Lie algebra and $M \cap\sl(2n)$ is an
irreducible $L$-module of dimension $>1$, it follows that $M
\cap\sl(2n)$ is faithful.}
\end{note}

\section{Viewing $\gl(m)$ as a module for $\sl(m)$}
\label{sec12}

We assume throughout this section that $m\geq 2$.

\begin{theorem}\label{simplysl2}
Suppose $(m,\ell) \ne (2,2)$. Then the only composition series of
$\gl(m)$ as a module for $\gl(m)$ or $\sl(m)$ are:
$$
0\subset \s\subset\sl(m)\subset \gl(m),\text{ if }\ell\mid m,
$$
$$
0\subset \sl(m)\subset \gl(m),\text{ if }\ell\nmid m.
$$
In particular,
$\mathfrak{sl}(m)/\mathfrak{s}\cap\mathfrak{sl}(m)$ is always
simple. More explicitly,
\begin{itemize}
\item if $\ell\nmid m$, then $\mathfrak{sl}(m)$ simple;

\item if $\ell\mid m$, then the only proper non-trivial ideal of
$\mathfrak{sl}(m)$ is $\mathfrak{s}$.
\end{itemize}
\end{theorem}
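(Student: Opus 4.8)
The plan is to reduce everything to the following central claim: the $\gl(m)$-submodules (equivalently the two-sided ideals, since the adjoint module structure coincides with the ideal structure) of $\gl(m)$ are precisely $0$, $\s$, $\sl(m)$, $\s+\sl(m)$, and $\gl(m)$, with the proviso that when $\ell \mid m$ one has $\s \subset \sl(m)$ so the list collapses to $0 \subset \s \subset \sl(m) \subset \gl(m)$, and when $\ell \nmid m$ one has $\gl(m) = \sl(m) \perp \s$ so the only proper nontrivial submodules are $\sl(m)$ and $\s$. Granting this, the composition series and the simplicity assertions follow immediately by reading off the factors: in the case $\ell \mid m$ the factor $\sl(m)/\s$ is simple because any ideal of $\sl(m)$ properly containing $\s$ would be a $\gl(m)$-submodule of $\gl(m)$ strictly between $\s$ and $\sl(m)$; in the case $\ell \nmid m$ the module $\sl(m)$ is itself simple by the same token, and $\sl(m)/(\s\cap\sl(m)) = \sl(m)$. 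Note that $\gl(m)$-submodules and $\sl(m)$-submodules of $\gl(m)$ need not a priori coincide, so I would verify separately that an $\sl(m)$-submodule is already a $\gl(m)$-submodule: this is easy, since $\gl(m) = \sl(m) + \s$ and $\s$ acts trivially by (\ref{mult}), so the $\sl(m)$-action and the $\gl(m)$-action on $\gl(m)$ have the same invariant subspaces.

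For the central claim itself I would argue directly with the matrix units. Let $K \ne 0$ be a $\gl(m)$-submodule and pick $0 \ne X = \sum c_{ij} e_{ij} \in K$. Using (\ref{mult}), bracketing $X$ with suitable $e_{pq}$ produces individual matrix units inside $K$ whenever $X$ has a nonzero off-diagonal entry, or a nonzero difference of diagonal entries; more precisely, if some $c_{kl} \ne 0$ with $k \ne l$, then $[e_{lk},[e_{kl}, e_{kl}]]$-type manipulations (or rather $[e_{ik},X]$ and a second bracket) land a scalar multiple of $e_{il}$ in $K$ for appropriate $i$, and once one matrix unit $e_{il}$ with $i \ne l$ lies in $K$, bracketing with all the $e_{pq}$ sweeps out every $e_{pq}$ with $p \ne q$, hence all of $\sl(m)$ (recall $\sl(m)$ is spanned by the off-diagonal units together with the $e_{ii}-e_{jj}$, and $[e_{pi},e_{il}]-$type brackets give the diagonal differences too). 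In that case $\s \subseteq K$ or not according to whether $\s \subseteq \sl(m)$, and $K$ is either $\sl(m)$, or $\sl(m)+\s$, or $\gl(m)$. The remaining possibility is that every element of $K$ is diagonal with all diagonal entries equal modulo the relations — concretely, that bracketing never escapes the scalars — which forces $K \subseteq \s$, hence $K = \s$ since $\dim \s = 1$.

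The step I expect to be the genuine obstacle — and the only place the hypothesis $(m,\ell) \ne (2,2)$ is used — is showing that a nonzero element with a nonzero off-diagonal entry really does generate a matrix unit, and dually that a diagonal element not in $\s$ generates $\sl(m)$. When $\ell \mid m$ one must be careful that $e_{ii}-e_{jj}$ lies in $\sl(m)$ but the identity $I_m$ does not, and that certain brackets of diagonal matrices vanish; the delicate sub-case is $m=2$, $\ell=2$, where $e_{11}-e_{22} = e_{11}+e_{22} = I_2$ is central, $[e_{12},e_{21}] = e_{11}-e_{22}$ is the identity, and the two off-diagonal units fail to bracket into a third distinct off-diagonal unit because there are none — this is exactly why that case is excluded and handled separately elsewhere. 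For all other $(m,\ell)$ I would organize the matrix-unit generation into a short lemma: for $i \ne j$, the single unit $e_{ij} \in K$ forces $\sl(m) \subseteq K$, proved by the computations $[e_{ki},e_{ij}] = e_{kj}$ (for $k \ne i$), $[e_{ij},e_{jk}] = e_{ik}$ (for $k \ne j$), and $[e_{ij},e_{ji}] = e_{ii}-e_{jj}$, and then checking these span $\sl(m)$ — which holds precisely because $m \geq 3$, or $m = 2$ with $\ell \neq 2$. Once that lemma is in place the rest is bookkeeping over the two cases $\ell \mid m$ and $\ell \nmid m$, using $\gl(m) = \sl(m)\perp\s$ from \S\ref{subtra} in the latter.
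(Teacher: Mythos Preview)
Your approach is essentially the same as the paper's: reduce to showing that any invariant subspace properly containing $\s\cap\sl(m)$ must contain $\sl(m)$, via matrix-unit bracket computations. Your lemma that a single off-diagonal $e_{ij}$ generates $\sl(m)$, and your treatment of the diagonal case, match the paper's argument.

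The one place where your proposal stays vague is precisely the step you flag as ``the genuine obstacle'': extracting a single matrix unit from an arbitrary $x$ with a nonzero off-diagonal entry $x_{ij}$. Your phrase ``$[e_{ik},X]$ and a second bracket'' does not pin this down, and naive two-step brackets typically pick up extra terms. The paper dispatches this with two explicit formulas: if $\ell\neq 2$ then
\[
[e_{ji},[e_{ji},x]]=-2\,x_{ij}\,e_{ji},
\]
while if $m>2$ one chooses $k\notin\{i,j\}$ and computes
\[
[e_{ki},[e_{jk},[e_{ji},x]]]=x_{ij}\,e_{ji}.
\]
Together these cover exactly the cases $(m,\ell)\neq(2,2)$, which is how the hypothesis enters. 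Plugging either of these into your outline completes the proof.
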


\begin{proof} Let $I$ be a subspace of $\gl(m)$ invariant under
$\gl(m)$ or $\sl(m)$ and properly containing $\sl(m)\cap\s$. It
suffices to show that $I$ contains $\sl(m)$.

If $e_{ij}\in I$ for some $i\neq j$ then (\ref{mult}) yields that
all $e_{kl}$, with $k\neq l$, as well as all traceless diagonal
matrices, are in $I$, so $\sl(m)\subseteq I$. If some non-scalar
diagonal matrix $h$ is in $I$, then $h_i\neq h_j$ for some $i\neq
j$, so $[h,e_{ij}]=(h_i-h_j)e_{ij}\in I$, and the first case
applies. Suppose $x\in I$ and $x_{ij}\ne 0$ for some $i\ne j$.
Then either $\ell\neq 2$, so $[e_{ji},[e_{ji},x]]=-2x_{ij}e_{ji}$,
and the first case applies, or $m>2$ and there exists
$k\in\{1,\ldots,n\}-\{i,j\}$, so
$[e_{ki},[e_{jk},[e_{ji},x]]]=x_{ij}e_{ji}$, and the first case
apples.
\end{proof}

\begin{note}{\rm It is stated in \cite{B}, Chapter I, \S 6,
Exercise 24(a), that bracketing any non-scalar element of $\gl(m)$
with at most four suitable chosen elements produces a non-zero
scalar multiple of one of the $e_{ij}$. The proof of Theorem
\ref{simplysl2} shows that three elements already
suffice.}
\end{note}

\section{Viewing $\gl(2n)$ as a module for $\sl(n)\oplus\sl(n)$ and $\sl(n)$}
\label{sec2}

If $L_1,L_2$ are Lie algebras then the vector space $L_1\oplus
L_2$ becomes a Lie algebra via
$[x_1+x_2,y_1+y_2]=[x_1,y_1]+[x_2,y_2]$ for $x_1,y_1\in L_1$ and
$x_2,y_2\in L_2$.

Let $W=\gl(r+n)$ be the adjoint module for $\gl(r+n)$. By means of
the imbedding $\gl(r)\oplus \gl(n)\hookrightarrow \gl(r+n)$, given
by $a+b\mapsto a\oplus b$, we may view $W$ as a module for
$\gl(r)\oplus\gl(n)$. We have
\begin{equation}
\label{excal} \left[ \left(\begin{array}{cc}
    a & 0 \\
   0 & b
  \end{array}
\right),\left(\begin{array}{cc}
    0 & s \\
   0 & 0
  \end{array}
\right)\right]=\left(\begin{array}{cc}
    0 & as-sb\\
   0 & 0
  \end{array}
\right)
\end{equation}
and
\begin{equation}
\label{excal2} \left[ \left(\begin{array}{cc}
    a & 0 \\
   0 & b
  \end{array}
\right),\left(\begin{array}{cc}
    0 & 0 \\
   t & 0
  \end{array}
\right)\right]=\left(\begin{array}{cc}
    0 & 0\\
   bt-ta & 0
  \end{array}
\right).
\end{equation}
Thus $Z=M_{r\times n}(F)$ is a $\gl(r)\oplus\gl(n)$-module under
the action
$$(a+b)\cdot s=as-sb,\quad a\in\gl(r),b\in\gl(n),s\in Z$$
and $A=M_{n\times r}(F)$ is a $\gl(r)\oplus\gl(n)$-module under
the action
$$(a+b)\cdot t=bt-ta, \quad a\in\gl(r),b\in\gl(n),s\in A.
$$

\begin{theorem}
\label{az}
 The map $\phi:A\to Z^*$, given by $$\phi_t(s)=\tr(ts),\quad t\in A,s\in Z,$$
 is an isomorphism of $\gl(r)\oplus \gl(n)$-modules.
\end{theorem}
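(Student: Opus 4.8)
The statement is that $\phi:A\to Z^*$, $\phi_t(s)=\tr(ts)$, is an isomorphism of $\gl(r)\oplus\gl(n)$-modules. There are three things to check: that $\phi$ is well-defined and linear into $Z^*$ (immediate, since $ts\in\gl(n)$ for $t\in M_{n\times r}(F)$, $s\in M_{r\times n}(F)$, and the trace is linear in each argument), that $\phi$ is bijective, and that $\phi$ intertwines the two module structures. I would dispose of linearity in one sentence and then treat bijectivity and equivariance.

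For bijectivity I would argue via the non-degeneracy of the pairing $A\times Z\to F$, $(t,s)\mapsto\tr(ts)$. Writing $s=\sum s_{ij}e_{ij}$ and $t=\sum t_{kl}e_{kl}$ (with the appropriate index ranges), one computes $\tr(ts)=\sum_{i,j}t_{ji}s_{ij}$, so if $\phi_t=0$ then every $t_{ji}=0$, i.e. $t=0$; hence $\phi$ is injective, and since $\dim A=rn=\dim Z=\dim Z^*$ it is an isomorphism of vector spaces. (Alternatively, one can just note the pairing is the restriction of the non-degenerate form $\vp$ of \S\ref{subtra} to complementary off-diagonal blocks and invoke that.)

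The real content is the equivariance, so that is where I would spend the calculation. Recall from \S\ref{subre} that the module structure on $Z^*$ is $(z\cdot\alpha)(s)=\alpha(-z\cdot s)$, so for $z=a\oplus b$ I must verify
$$
\phi_{(a+b)\cdot t}(s)=-\phi_t((a+b)\cdot s)\quad\text{for all }s\in Z.
$$
Using the formulas $(a+b)\cdot t=bt-ta$ and $(a+b)\cdot s=as-sb$ from \eqref{excal} and \eqref{excal2}, the left side is $\tr((bt-ta)s)=\tr(bts)-\tr(tas)$ and the right side is $-\tr(t(as-sb))=-\tr(tas)+\tr(tsb)$. These agree precisely because $\tr(bts)=\tr(tsb)$ by cyclicity of the trace, so the $\tr(tas)$ terms cancel and the identity holds. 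This is the one step with any substance, but it is a two-line trace manipulation rather than a genuine obstacle.

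**Expected main obstacle.** There is no serious obstacle; the only point requiring care is keeping the sign convention for the dual module straight (the $-x\cdot w$ in the definition of $W^*$), since a sign slip there would make the equivariance check fail. I would therefore state the dual-module action explicitly before computing, so that the cancellation via cyclicity of the trace is transparent.
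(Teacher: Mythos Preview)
Your proposal is correct and follows essentially the same approach as the paper: the paper simply asserts ``this is a linear isomorphism'' and then computes both $\phi_{(a+b)\cdot t}(s)=\tr(bts-tas)$ and $((a+b)\cdot\phi_t)(s)=\tr(-tas+tsb)$, leaving the equality (via cyclicity of the trace) to the reader. Your version is slightly more detailed---you spell out the non-degeneracy of the pairing for bijectivity and name the cyclicity step explicitly---but the argument is the same.
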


\begin{proof} This is a linear
isomorphism. Moreover, if $a+b\in\gl(r)\oplus\gl(n)$ then
$$
\phi_{(a+b)\cdot t}(s)=\phi_{bt-ta}(s)=\tr(bts-tas),
$$
$$
((a+b)\cdot \phi_t)(s)=\phi_t(-(a+b)\cdot
s)=\phi_t(-as+sb)=\tr(-tas+tsb).
$$
\end{proof}

We assume $r=n$ for the remainder of this section. By
means of the imbedding $\gl(n)\hookrightarrow
\gl(n)\oplus\gl(n)\hookrightarrow \gl(2n)$, given by
$a\mapsto a\oplus -a'$, we may view $W$ as a module for
$\gl(n)$. We have
\begin{equation}
\label{tuy} \left[ \left(\begin{array}{cc}
    a & 0 \\
   0 & -a'
  \end{array}
\right),\left(\begin{array}{cc}
    0 & s \\
   0 & 0
  \end{array}
\right)\right]= \left(\begin{array}{cc}
    0 & as+sa'\\
   0 & 0
  \end{array}
\right)
\end{equation}
and
\begin{equation}
\label{tuy2} \left[ \left(\begin{array}{cc}
    a & 0 \\
   0 & -a'
  \end{array}
\right),\left(\begin{array}{cc}
    0 & 0 \\
   t & 0
  \end{array}
\right)\right]= \left(\begin{array}{cc}
    0 & 0\\
   -a't-ta & 0
  \end{array}
\right).
\end{equation}
Thus $Z=\gl(n)$ becomes a $\gl(n)$-module under the action
$$a\cdot s=as+sa',\quad a\in\gl(n),\,s\in Z$$
and
$A=\gl(n)$ becomes a $\gl(n)$-module under the action
$$
a\cdot t=-a't-ta,\quad a\in\gl(n),\,t\in A.
$$
This is nothing but the automorphism $a\mapsto-a'$ followed the previous action on $Z$.

Clearly the spaces of symmetric and alternating matrices are
$\gl(n)$-submodules of $Z$ (resp. $A$), denoted by $S$ and $T$
(resp. $B$ and $C$). Moreover, if $\ell\neq 2$ we have $Z=S\oplus
T$ (resp. $A=B\oplus C$).

\begin{prop}
\label{az4} Suppose that $\ell\neq 2$. Then the map $\phi:A\to
Z^*$ defined in Theorem \ref{az} is an isomorphism of
$\gl(n)$-modules sending $B$ onto $S^*$ and $C$ onto~$T^*$.
\end{prop}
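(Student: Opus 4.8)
The plan is to combine Theorem \ref{az}, which already gives the $\gl(n)\oplus\gl(n)$-module isomorphism $\phi:A\to Z^*$ in the case $r=n$, with the observation that the $\gl(n)$-action on $Z$ and $A$ used in Proposition \ref{az4} is obtained from the $\gl(n)\oplus\gl(n)$-action by precomposing with the imbedding $a\mapsto a\oplus(-a')$. First I would note that $\phi$, being a $\gl(n)\oplus\gl(n)$-module isomorphism, remains a $\gl(n)$-module isomorphism after restriction along any Lie homomorphism $\gl(n)\to\gl(n)\oplus\gl(n)$, in particular along $a\mapsto a\oplus(-a')$. One checks directly that restricting the $\gl(n)\oplus\gl(n)$-action on $Z$ along this map recovers exactly $a\cdot s=as+sa'$, and similarly on $A$ one gets $a\cdot t=-a't-ta$; these are precisely the actions declared just before Proposition \ref{az4}. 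So the isomorphism statement for $\gl(n)$-modules is immediate from Theorem \ref{az}.

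It then remains to verify that $\phi$ carries $B$ onto $S^*$ and $C$ onto $T^*$. Here I would use the standard identification: under $Z=S\oplus T$ (valid since $\ell\neq 2$), the annihilator $T^{0}=\{\alpha\in Z^*:\alpha|_T=0\}$ is canonically $S^*$, and $S^{0}$ is canonically $T^*$. Hence it suffices to show $\phi(B)\subseteq T^{0}$ and $\phi(C)\subseteq S^{0}$, i.e. that $\tr(ts)=0$ whenever $t\in B$ (symmetric) and $s\in T$ (alternating), and whenever $t\in C$ (alternating) and $s\in S$ (symmetric). Both vanishings are exactly the content of (\ref{skew}) (or directly (\ref{trazz})): the trace pairing $\tr(ts)$ between a symmetric and an alternating matrix is zero when $\ell\neq 2$. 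This gives $\phi(B)\subseteq T^{0}\cong S^*$ and $\phi(C)\subseteq S^{0}\cong T^*$; since $\dim B=\dim S=\dim S^*$ and $\dim C=\dim T=\dim T^*$, and $\phi$ is injective, these inclusions are equalities.

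I do not expect any genuine obstacle here: the proposition is essentially a bookkeeping corollary of Theorem \ref{az} together with the orthogonality of symmetric and alternating matrices under $\vp$ recorded in \S\ref{subtra}. The one point requiring a modicum of care is the bases-free identification of $S^*$ with the annihilator of $T$ (and $T^*$ with the annihilator of $S$); I would phrase it via the restriction maps $Z^*\to S^*$ and $Z^*\to T^*$, whose kernels are $S^{0}$ and $T^{0}$ respectively, and which are isomorphisms on the relevant summands. Everything else is a dimension count and an appeal to (\ref{skew}).
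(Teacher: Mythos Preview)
Your proposal is correct and takes essentially the same approach as the paper: both invoke Theorem \ref{az} for the $\gl(n)$-isomorphism, use (\ref{skew}) to get the orthogonality of symmetric and alternating matrices under the trace pairing, and finish with a dimension count. The only cosmetic difference is that you phrase the key step as $\phi(B)\subseteq T^{0}$ directly, whereas the paper phrases it contrapositively (if $\phi_b|_S=0$ then, since $\phi_b|_T=0$ by (\ref{skew}), $b=0$); these are the same argument.
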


\begin{proof} It follows from Theorem \ref{az} that $\phi$ is an
isomorphism of $\gl(n)$-modules. Let $b\in B$ and suppose that
$\phi_b(s)=0$ for all $s\in S$. By (\ref{skew}) we also have
$\phi_b(t)=0$ for all $t\in T$. Since $Z=S\oplus T$, it follows
that $b=0$. Dimension considerations imply that $\phi$ sends $B$
onto $S^*$. Likewise we show that $\phi$ sends $C$ onto $T^*$.
\end{proof}

\begin{theorem}
\label{uca}
 Suppose $\ell\neq 2$ and $n=4$. Then the map $h:T\to C$, given by
$$s= \left(\begin{array}{cccc}
    0 & a & b & c\\
   -a& 0 & d & e\\
   -b & -d & 0 & f\\
   -c & -e & -f & 0\\
  \end{array}
\right)\mapsto s^*=\left(\begin{array}{cccc}
    0 & f & -e & d\\
   -f & 0 & c & -b\\
   e & -c & 0 & a\\
   -d & b & -a & 0\\
  \end{array}
\right),$$ is an isomorphism of $\sl(4)$-modules. The composite
map $T\to C\to T^*$, given by $s\mapsto \phi_{s^*}$, is an
isomorphism of $\sl(4)$-modules. The corresponding non-degenerate
$\sl(4)$-invariant bilinear form $g:T\times T\to F$, namely
$g(s,t)=\tr(s^* t)$, is symmetric. Consequently,
$\sl(4)\cong\so(6)$ provided $F=F^2$.
\end{theorem}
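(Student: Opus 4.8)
The plan is to verify directly that the explicit linear map $h:T\to C$ is an isomorphism of $\sl(4)$-modules, then combine it with Proposition \ref{az4} and chase bilinear forms. First I would check that $h$ is a linear bijection: it is visibly linear in the six entries $a,b,c,d,e,f$, and one reads off that it sends a basis of $T$ to a basis of $C$ (each off-diagonal entry of $s^*$ is $\pm$ one of the six parameters, with no repetitions), so $h$ is invertible. The substantive point is $\sl(4)$-equivariance: one must show $h(a\cdot s)=a\cdot h(s)$ for all $a\in\sl(4)$, where the action on $T$ is $a\cdot s=as+sa'$ and the action on $C$ is $a\cdot t=-a't-ta$. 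By linearity in $a$ it suffices to test this on the standard basis $e_{ij}$ (or on $e_{ij}-e_{ji}$ and diagonal traceless matrices), which reduces the claim to a finite check; the $4\times 4$ size and the sparsity of each $e_{ij}$ keep this manageable, though tedious.

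Once $h:T\to C$ is an $\sl(4)$-isomorphism, Proposition \ref{az4} gives that $\phi$ restricts to an $\sl(4)$-isomorphism $C\to T^*$, so the composite $s\mapsto \phi_{s^*}$ is an $\sl(4)$-isomorphism $T\to T^*$. By the correspondence recalled in \S\ref{subcla} between $L$-homomorphisms $W\to W^*$ and $L$-invariant bilinear forms, the associated form $g(s,t)=\phi_{s^*}(t)=\tr(s^*t)$ is a non-degenerate $\sl(4)$-invariant bilinear form on $T$ (non-degenerate because the map $T\to T^*$ is an isomorphism). It remains to check that $g$ is symmetric, i.e. $\tr(s^*t)=\tr(t^*s)$ for all $s,t\in T$. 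Writing $s$ with entries $a,\dots,f$ and $t$ with entries $a',\dots,f'$ (not to be confused with transposes), both sides expand to the same bilinear combination; I expect $g(s,t)=2(af'+fa'-be'-eb'+cd'+dc')$ up to sign, which is manifestly symmetric. Alternatively one can observe that $s\mapsto s^*$ is, up to normalization, an involution (or that $h^2$ is a scalar), which forces symmetry of $g$; I would record whichever verification is shorter.

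Finally, to conclude $\sl(4)\cong\so(6)$ when $F=F^2$: the $\sl(4)$-module $T$ has dimension $6$, and by Theorem \ref{simplysl2} (with $(m,\ell)=(4,\ell)$, $\ell\neq 2$) $\sl(4)$ is a simple Lie algebra, hence $T$ is a faithful module—its kernel is an ideal of $\sl(4)$, and it cannot be all of $\sl(4)$ since $T$ is nonzero and the action is nontrivial (e.g. $e_{11}-e_{22}$ acts nontrivially, as one sees from $a\cdot s=as+sa'$), nor can it be the ideal $\s$ when $\ell\mid 4$ by a similar explicit check, so the kernel is $0$. Thus the invariant form $g$ makes $T$ a faithful $\sl(4)$-module carrying a non-degenerate symmetric bilinear form; since $F=F^2$ we may assume $g$ is non-alternating (in characteristic $\neq 2$ a non-degenerate symmetric form on a space of even dimension is automatically non-alternating, or one checks $g(s,s)\neq 0$ for some $s$), so \S\ref{subcla} yields an imbedding $\sl(4)\hookrightarrow\so(6)$. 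Both Lie algebras are $15$-dimensional, so the imbedding is an isomorphism.

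The main obstacle is the equivariance check for $h$: unlike the symplectic case in \S\ref{sec1}, there is no slick invariant-form argument available here, and one genuinely must confront the explicit $4\times 4$ formula. Organizing the computation efficiently—testing only on the generators $e_{ij}$ and exploiting the $\mathbb{Z}/2$ and sign symmetries visible in the displayed matrix for $s^*$—is what keeps it from being prohibitive; everything after that is a short formal deduction.
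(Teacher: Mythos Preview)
Your plan is correct and follows essentially the same route as the paper: verify $h$ is equivariant by testing on generators, compose with the isomorphism $\phi:C\to T^*$ of Proposition~\ref{az4}, check symmetry of $g$, and invoke simplicity of $\sl(4)$ (Theorem~\ref{simplysl2}) for faithfulness. Two small points: since $\ell\neq 2$ you automatically have $\ell\nmid 4$, so $\sl(4)$ is simple and your extra caution about the ideal $\s$ is unnecessary; and in characteristic $\neq 2$ a nonzero symmetric form is never alternating, so that side remark can be dropped.

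Where the paper sharpens your execution is precisely at the obstacle you flagged. Rather than testing equivariance on all $e_{ij}$, the paper checks only the three generators $e_{12},e_{23},e_{34}$; repeated use of the bracket relation $[e_{ij},e_{jk}]=e_{ik}$ then gives all $e_{ij}$ with $i<j$. The passage to $e_{ji}$ with $j>i$ is handled not by further computation but by exploiting the involution you noticed: since $(s^*)^*=s$, the identity $h(e_{ij}\cdot s)=e_{ij}\cdot h(s)$, rewritten with $t=s^*$, becomes exactly $h(e_{ji}\cdot t)=e_{ji}\cdot h(t)$. So the whole equivariance check reduces to three explicit $4\times 4$ verifications. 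You already had the involution in hand for the symmetry of $g$; using it here as well is what makes the computation genuinely short rather than merely ``manageable''.
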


\begin{proof} Clearly $h$ is a linear isomorphism. We easily verify that $h$ commutes with the actions of $e_{12},e_{23},e_{34}$ on $T$ and $C$.
In light of (\ref{mult}), the same happens to all $e_{ij}$, where
$1\leq i<j\leq 4$. Let $t\in T$. Then $t=s^*$ for $s=t^*$. If
$1\leq i<j\leq 4$ then $h(e_{ij}\cdot s)=e_{ij}\cdot h(s)$, i.e.,
$(e_{ij}s+se_{ji})^*=-(e_{ji}s^*+s^*e_{ij})$, which means
$-(e_{ij}t^*+t^*e_{ji})=(e_{ji}t+te_{ij})^*$, that is,
$e_{ji}\cdot h(t)=h(e_{ji}\cdot t)$. Using (\ref{mult}) once more
yields that $f$ commutes with the action of all $x\in\sl(4)$. The
symmetry of $g$ is easily verified.

We know from Theorem \ref{simplysl2} that $\sl(4)$ is simple and
it is clear from (\ref{tuy}) that $\sl(4)$ does not act trivially
on $T$. Thus, as explained in \S\ref{subcla}, $g$ yields an
imbedding $\sl(4)\hookrightarrow \so(6)$, which is an isomorphism
since they are both of dimension 15.
\end{proof}

Note that if $n=1$ then $T=0$ and if $n=2$ then $T$ is the trivial
$\sl(2)$-module. We assume $n\geq 2$ for the remainder of this
section.

\begin{theorem}
\label{24} Suppose $\ell\neq 2$. If $n\neq 2,4$ then $T$ is not a
self-dual $\sl(n)$-module.
\end{theorem}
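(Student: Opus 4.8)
The plan is to attack self-duality through the weight theory sketched in the notes above, since comparing dimensions alone is insufficient: in general, a self-dual module and its dual share the same dimension by definition, so we need a finer invariant. The natural one is the set of weights for the diagonal Cartan subalgebra $H$ of $\sl(n)$ acting on $T$. Since $T=C\subseteq\mathrm{Alt}(n)$ sits inside $Z=\gl(n)$ with the action $a\cdot t=-a't-ta$, one computes directly from this formula that the weights of $H$ on $T$ are exactly the pairwise sums $-\varepsilon_i-\varepsilon_j$ with $1\le i<j\le n$ (equivalently, with the conventions above, they are the negatives of pairwise sums of distinct $\varepsilon_i$, coming from the "$-a'$" twist applied to the exterior-square action). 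By the observation in the ``Weights'' subsection, if $T\cong T^*$ then the weight set of $H$ on $T$ must be stable under negation $\varepsilon_i\mapsto-\varepsilon_i$.

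The key step is therefore to show that the multiset $\{-\varepsilon_i-\varepsilon_j : i<j\}$ is stable under $\alpha\mapsto-\alpha$ only when $n\in\{2,4\}$. First I would note that $\sum_i\varepsilon_i=0$ in $H^*$ (the defining relation of the diagonal Cartan of $\sl(n)$), which is precisely what introduces exceptional behavior. For $n=4$ we have $-\varepsilon_i-\varepsilon_j = \varepsilon_k+\varepsilon_l$ where $\{k,l\}=\{1,2,3,4\}\setminus\{i,j\}$, so the weight set is genuinely stable under negation — this recovers the isomorphism of Theorem \ref{uca}. For $n=2$ the module is trivial and the statement is vacuous as already noted. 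For the remaining $n\ge 3$, $n\ne 4$, I would argue that $-\varepsilon_1-\varepsilon_2$ is \emph{not} among the weights $\{-\varepsilon_i-\varepsilon_j\}$: using $\sum\varepsilon_i=0$, an equality $-\varepsilon_1-\varepsilon_2=-\varepsilon_i-\varepsilon_j$ in $H^*$ would force, after clearing the relation, a linear dependence among the $\varepsilon_i$ that is only consistent when $\{1,2\}=\{i,j\}$ (for $n\ge 5$, where $\varepsilon_1,\dots,\varepsilon_{n-1}$ are independent) or reduces to the $n=4$ coincidence. In the edge case $n=3$ one checks by hand: the three weights are $-\varepsilon_1-\varepsilon_2=\varepsilon_3$, $-\varepsilon_1-\varepsilon_3=\varepsilon_2$, $-\varepsilon_2-\varepsilon_3=\varepsilon_1$, whose negation-image $\{-\varepsilon_1,-\varepsilon_2,-\varepsilon_3\}$ is disjoint from it, so $T\not\cong T^*$.

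The main obstacle I anticipate is purely bookkeeping: pinning down the exact weights of $H$ on $T$ with the correct signs, given the composition of the transpose automorphism $a\mapsto-a'$ with the natural exterior-square action, and then handling $F$ of arbitrary characteristic. Characteristic $\ne 2$ is assumed, so $2$ is invertible and $T$ is a genuine direct summand of $Z$; but I should be careful that the ``weight'' argument uses only the bilinear action of $H$ on $T$ and the fact that isomorphic modules have identical weight \emph{sets} (not multiplicities), which holds over any field — this is exactly the content of the last sentence of the ``Weights'' subsection, so no semisimplicity or char-$0$ input is needed. The small-$n$ cases $n=3$ (and the degenerate $n=1,2$) must be dispatched separately since the generic independence of $\varepsilon_1,\dots,\varepsilon_{n-1}$ only kicks in for larger $n$; beyond that the argument is a short linear-algebra computation in $H^*$.
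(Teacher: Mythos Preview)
Your approach is essentially the paper's: compare the weights of the diagonal Cartan $H\subset\sl(n)$ on $T$ and on $T^*\cong C$ and show the two sets are disjoint unless $n\in\{2,4\}$. Two bookkeeping slips to fix, of exactly the sort you anticipated. First, $T$ and $C$ are \emph{not} the same module: $T\subset Z$ carries the action $a\cdot s=as+sa'$ from (\ref{tuy}), so its weights are $\varepsilon_i+\varepsilon_j$ for $i<j$, while $C\subset A$ carries $a\cdot t=-a't-ta$ from (\ref{tuy2}) with weights $-(\varepsilon_p+\varepsilon_q)$; you have written ``$T=C$'' and computed with $C$'s action. Second, the sentence ``$-\varepsilon_1-\varepsilon_2$ is not among the weights $\{-\varepsilon_i-\varepsilon_j\}$'' is trivially false as written; you mean that its \emph{negation} $\varepsilon_1+\varepsilon_2$ is not among them, i.e.\ that $\varepsilon_1+\varepsilon_2+\varepsilon_i+\varepsilon_j=0$ has no solution with $i<j$ unless $n\in\{2,4\}$.

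Once the signs are straightened out, the single relation (\ref{uva}), namely $a_1\varepsilon_1+\cdots+a_n\varepsilon_n=0\Leftrightarrow a_1=\cdots=a_n$, disposes of all $n\ge3$, $n\ne4$, uniformly: the coefficient vector of $\varepsilon_1+\varepsilon_2+\varepsilon_i+\varepsilon_j$ has entries in $\{0,1,2\}$, and they are all equal only when $n=4$ and $\{i,j\}=\{3,4\}$ (the case $n=2$ being degenerate). So your separate hand-check for $n=3$ and your appeal to the independence of $\varepsilon_1,\dots,\varepsilon_{n-1}$ for $n\ge5$ are both unnecessary.
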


\begin{proof} Let $H$ be the space of diagonal matrices $a\oplus -a'$ with $a\in\sl(n)$.
Let $\va_i:H\to F$ the $i$th coordinate function, $h\mapsto
h_{ii}$, for $1\leq i\leq n$. Observe that
\begin{equation}
\label{uva} a_1\va_1+\cdots+a_n \va_n=0\Leftrightarrow
a_1=\cdots=a_{n}.
\end{equation}
The eigenvalues of $H$ acting on $T$ can explicitly computed from
(\ref{tuy}). They are $\va_i+\va_j$, where $1\leq i<j\leq n$. On
the other hand, (\ref{tuy2}) shows that the eigenvalues of $H$
acting on $C$ are $-(\va_{p}+\va_{q})$, where $1\leq p<q\leq n$.
Thus the sets of eigenvectors for the actions of $H$ on $T$ are
$C$ are disjoint if $n\neq 2,4$ by (\ref{uva}). It follows from
Proposition \ref{az4} that $T\not\cong T^*$.
\end{proof}

\begin{theorem}
\label{ugas} Both $C$ and $T$ are irreducible $\sl(n)$-modules.
\end{theorem}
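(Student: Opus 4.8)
The plan is to pass from $C$ to $T$, and then prove that $T$ is irreducible by decomposing it under the diagonal torus and propagating any single weight vector to all of $T$. First, Proposition~\ref{az4} gives an isomorphism of $\sl(n)$-modules $C\cong T^*$, and since a module is irreducible precisely when its dual is (see \S\ref{subre}), it suffices to prove that $T$ is irreducible. This is trivial when $n=2$, since then $\dm\,T=1$, so I would assume $n\geq 3$ and work with the basis $s_{ij}=e_{ij}-e_{ji}$, $1\leq i<j\leq n$, of $T$, setting also $s_{ij}=-s_{ji}$ and $s_{ii}=0$. From the definition $a\cdot s=as+sa'$ of the action of $\sl(n)$ on $T$ together with $e_{pq}e_{rs}=\delta_{qr}e_{ps}$, a direct computation gives $e_{kl}\cdot s_{ij}=\delta_{li}s_{kj}+\delta_{lj}s_{ik}$ for $k\neq l$, and $h\cdot s_{ij}=(\va_i+\va_j)(h)\,s_{ij}$ for $h$ in the diagonal subalgebra $H$ of $\sl(n)$, where $\va_i\in H^*$ is the $i$th coordinate function.

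The key step is to show that the weights $\va_i+\va_j$ ($1\leq i<j\leq n$) are pairwise distinct in $H^*$. Given $\{i,j\}\neq\{k,l\}$, one exhibits a traceless diagonal matrix separating them: if the two pairs share exactly one index this is easy once $n\geq 3$; if they are disjoint — forcing $i,j,k,l$ distinct and $n\geq 4$ — one uses a further index when $n\geq 5$, and for $n=4$ the computation $\bigl((\va_i+\va_j)-(\va_k+\va_l)\bigr)(e_{ii}-e_{kk})=2$ settles it because $\ell\neq 2$. (This is the only essential use of $\ell\neq 2$ beyond the appeal to Proposition~\ref{az4}; the hypothesis is genuine, since for $(\ell,n)=(2,4)$ the statement fails, and it is in force throughout this section.) Consequently $T=\bigoplus_{i<j}Fs_{ij}$ is the weight-space decomposition of $T$ under $H$, and a routine induction on the size of the support — replacing a nonzero element $u$ of an $H$-stable subspace by $h\cdot u-(\va_c+\va_d)(h)\,u$ for a suitable $h\in H$ and a pair $(c,d)$ in the support of $u$, which strictly shrinks the support while keeping it nonempty — shows that every nonzero $H$-stable subspace of $T$, in particular every nonzero $\sl(n)$-submodule, contains some $s_{ab}$.

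It remains to check that an $\sl(n)$-submodule $U$ of $T$ containing a single $s_{ab}$ (with $a\neq b$) is all of $T$; this follows at once from the formula for $e_{kl}\cdot s_{ij}$ above, since $e_{ca}\cdot s_{ab}=s_{cb}$ (for $c\neq b$) and $e_{db}\cdot s_{ab}=s_{ad}$ (for $d\neq a$) put into $U$ every $s_{ij}$ with $\{i,j\}\cap\{a,b\}\neq\emptyset$, after which $e_{ja}\cdot s_{ai}=s_{ji}$ accounts for the remaining $s_{ij}$ (a step vacuous when $n=3$). Hence $U=T$, so $T$, and therefore $C$, is irreducible. I do not anticipate a real obstacle: the argument is essentially bookkeeping with weights. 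The one point requiring care is the distinctness of the weights $\va_i+\va_j$ when $n=4$, which is precisely where $\ell\neq 2$ enters and where the characteristic-$2$ analogue breaks down.
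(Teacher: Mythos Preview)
Your argument is correct when $\ell\neq 2$, but the theorem as stated carries no such hypothesis. Notice that, unlike the neighbouring Proposition~\ref{az4}, Theorem~\ref{24} and Theorem~\ref{ugas33}, Theorem~\ref{ugas} omits ``Suppose $\ell\neq 2$''; this is deliberate, and the result is later invoked in characteristic $2$ in the proof of Theorem~\ref{teosec12}. So your proof leaves a genuine gap, and your assertion that the statement fails for $(\ell,n)=(2,4)$ is incorrect: $T\cong\Lambda^2(V)$ is irreducible over $\sl(n)$ in every characteristic (for instance because $\omega_2$ is minuscule for type $A$), and one can check directly that, say, $s_{12}+s_{34}$ generates all of $T$ under $\sl(4)$ even when $\ell=2$ by applying $e_{13}$ to obtain $s_{14}$.

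Both places where you invoke $\ell\neq 2$ are avoidable. For the reduction from $C$ to $T$, the paper does not go through Proposition~\ref{az4} and duality; it simply observes that the automorphism $a\mapsto -a'$ of $\sl(n)$ carries the action on $T$ to the action on $C$, so one is irreducible iff the other is, in any characteristic. For the irreducibility of $T$ itself, the paper bypasses weights entirely: since $a\cdot t=at-(at)'$, the submodule generated by $t$ is closed under ``left-multiply by $\sl(n)$, then alternate'', and a short calculation (essentially your formula $e_{kl}\cdot s_{ij}=\delta_{li}s_{kj}+\delta_{lj}s_{ik}$, iterated once or twice) extracts a single $s_{ab}$ from any nonzero $t$. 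Your propagation step from one $s_{ab}$ to all of $T$ is fine and is exactly what the paper does; it is only the extraction step that needs to be redone characteristic-free. Concretely, if $t_{ij}\neq 0$ choose $k\notin\{i,j\}$ (here $n\geq 3$) and compute $e_{ki}\cdot t=\sum_{q\neq i,k}t_{iq}s_{kq}$; one more application of some $e_{jk}$ isolates a basis vector, with no appeal to distinctness of weights.
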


\begin{proof} Since $a\mapsto -a'$ is an automorphism of $\sl(n)$,
it is clear that $C$ is irreducible if and only if so is $T$. We
next verify that $T$ is irreducible.

Let $a\in\sl(n)$ and set $b=a'$. Suppose $t\in T$. Then
$$a\cdot t=at+ta'=at-(at)'=tb-(tb)'.$$
Thus the $\sl(n)$-module generated by $t$ contains all matrices
obtained from $t$ by arbitrary left and right multiplication by
$\sl(n)$ followed by ``alternation". By doing this we can easily
pass from any $t\neq 0$ to $e_{12}-e_{21}$ and from there to any
$e_{ij}-e_{ij}$.
\end{proof}

\begin{theorem}
\label{ugas33} Suppose $\ell\neq 2$. Then both $B$ and $S$ are
irreducible $\sl(n)$-modules.
\end{theorem}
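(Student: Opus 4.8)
The statement to prove is Theorem \ref{ugas33}: when $\ell\neq 2$, both $B$ (alternating matrices, acting inside $A=\gl(n)$) and $S$ (symmetric matrices, acting inside $Z=\gl(n)$) are irreducible $\sl(n)$-modules. The natural strategy is to mirror the argument for Theorem \ref{ugas}. First I would observe that the automorphism $a\mapsto -a'$ of $\sl(n)$ interchanges (up to the twist already discussed after \eqref{tuy2}) the actions on $Z=\gl(n)$ and $A=\gl(n)$, and in particular carries $S$-type behaviour to $S$-type behaviour; more cheaply, by Proposition \ref{az4} the map $\phi$ is an $\sl(n)$-isomorphism sending $B$ onto $S^*$, so $B$ is irreducible if and only if $S^*$ is, if and only if $S$ is (irreducibility is preserved under dualizing, as noted in \S\ref{subre}). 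Hence it suffices to prove that $S$ is an irreducible $\sl(n)$-module under the action $a\cdot s = as + sa'$.

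\textbf{Core computation.} For $s\in S$ and $a\in\sl(n)$ with $b=a'$, note $a\cdot s = as + sa' = as + (as)'$, the ``symmetrization'' of $as$; equivalently $a\cdot s = a's$ symmetrized as well. So the $\sl(n)$-submodule generated by a nonzero $s\in S$ contains every matrix obtained from $s$ by left (or right) multiplication by an element of $\sl(n)$ followed by symmetrization, and this can be iterated. The plan is to show that starting from any nonzero $s\in S$ one can reach a fixed nonzero element, say $e_{11}+e_{22}$ if $\ell\nmid 2$, or more robustly $e_{12}+e_{21}$, and from there reach all of $S$. Concretely: given $s\neq 0$, pick indices with $s_{ij}\neq 0$; hitting $s$ with a suitable $e_{pq}$ ($p\neq q$) and symmetrizing isolates a single off-diagonal symmetric generator $e_{ij}+e_{ji}$ up to a nonzero scalar (here $\ell\neq 2$ is what prevents the symmetrization from killing the relevant term when $i=j$, and the $n\geq 2$ hypothesis in force for this section guarantees enough room). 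Then from $e_{ij}+e_{ji}$ one reaches every other $e_{kl}+e_{lk}$ by conjugation-type moves using the $e_{pq}$, and reaches the traceless diagonal symmetric matrices $e_{ii}-e_{jj}$ by computing $e_{ij}\cdot(e_{ij}+e_{ji})$. Since $\ell\neq 2$, the scalar matrix $I_n$ is obtained as $\tfrac12\,a\cdot s$ is \emph{not} available from traceless data, so one must check separately whether $I_n\in S$ is forced; in fact $I_n = e_{11}\cdot(\text{something})$? No — rather, one observes $\mathrm{Sym}(n)$ decomposes as the trace-zero part plus $\langle I_n\rangle$ only when $\ell\nmid n$, so I should be careful and instead produce $I_n$ directly, e.g. by symmetrizing $e_{12}\cdot(e_{12}+e_{21})$-type expressions to land on a nonzero multiple of a diagonal matrix with nonzero trace; checking that such an element arises is the one place demanding a short explicit calculation.

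\textbf{The main obstacle.} The genuine subtlety, distinguishing this from Theorem \ref{ugas}, is that $S=\mathrm{Sym}(n)$ contains $I_n$, and the ``symmetrize $as$'' operation can produce traceless outputs even from $I_n$ (since $a\cdot I_n = a + a' $, which is symmetric but need not be traceless as $\mathrm{tr}(a+a')=2\,\mathrm{tr}(a)=0$ for $a\in\sl(n)$ — so actually $a\cdot I_n$ \emph{is} always traceless). Thus the image of $I_n$ under $\sl(n)$ lies in the trace-zero symmetric matrices, and conversely one must verify the trace-zero symmetric matrices generate everything including $I_n$; this forces showing the submodule generated by any nonzero $s$ is not contained in $\mathrm{Sym}(n)_0$. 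The resolution: if $\mathrm{tr}(s)\neq 0$ we are immediately outside $\mathrm{Sym}(n)_0$; if $\mathrm{tr}(s)=0$ then $s$ has a nonzero off-diagonal or two distinct diagonal entries, and applying a suitable $e_{pq}$ and symmetrizing yields a matrix with nonzero trace (a short index chase, using $\ell\neq 2$). Once $I_n$ and all $e_{ij}+e_{ji}$ ($i\neq j$) and all $e_{ii}-e_{jj}$ are in the submodule, a dimension count gives $\mathrm{Sym}(n)$, completing the proof. So the plan is: (i) reduce $B$ to $S$ via Proposition \ref{az4}; (ii) show any nonzero $s$ generates a submodule meeting the trace-nonzero locus; (iii) from there extract $I_n$, all off-diagonal symmetric generators, and all traceless diagonal symmetric matrices; (iv) conclude by dimension. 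The hard part is step (ii)–(iii), specifically the explicit move that breaks out of $\mathrm{Sym}(n)_0$ and isolates single generators, which is where the characteristic-$\neq 2$ hypothesis is essential.
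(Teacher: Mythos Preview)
Your approach is correct and matches the paper's: reduce $B$ to $S$ via Proposition~\ref{az4} and duality, then show $S$ is irreducible by verifying that any nonzero $s$ generates all of $\mathrm{Sym}(n)$ under the symmetrization action $a\cdot s = as + (as)'$. Your detour through the trace-zero subspace $\mathrm{Sym}(n)_0$ is unnecessary, however: the paper simply passes from an arbitrary nonzero $s$ directly to $e_{11}$ (which already has nonzero trace), and from $e_{11}$ to every $e_{ij}+e_{ji}$, so the potential obstruction you flag never materializes and your steps (ii)--(iii) collapse into a single short computation.
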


\begin{proof} By Proposition \ref{az4} we have $B\cong S^*$, so
$B$ is irreducible if and only if so is~$S$. We next verify that
$S$ is irreducible.

Let $a\in\sl(n)$ and set $b=a'$. Suppose $s\in S$. Then $$a\cdot
s=as+(as)'=sb+(sb)'.$$ Thus the $\sl(n)$-module generated by $s$
contains all matrices obtained from $s$ by arbitrary left and
right multiplication followed by ``symmetrization". By doing this
we can easily pass from any $s\neq 0$ to $e_{11}$ and from there
to any $e_{ij}+e_{ij}$.
\end{proof}

\begin{note}{\rm Let $\mathrm{Bil}(V)$ be the vector space all bilinear
forms $\beta:V\times V\to F$. Then $\mathrm{Bil}(V)$ becomes a
$\gl(V)$-module via
\begin{equation}
\label{biliboy} (x\cdot \beta)(u,v)=-\beta(xu,v)-\beta(u,xv).
\end{equation}
Given $\beta\in\mathrm{Bil}(V)$, the subalgebra of all
$x\in\gl(V)$ such that $x\cdot \beta=0$ is just $L(\beta)$.

Let $\mathrm{Sym}(V)$ and $\mathrm{Alt}(V)$ be the subspaces of
symmetric and alternating bilinear forms on $V$. Clearly
$\mathrm{Sym}(V)$ and $\mathrm{Alt}(V)$ are $\gl(V)$-submodules of
$\mathrm{Bil}(V)$. We have canonical $\gl(V)$-isomorphisms
$$
\mathrm{Bil}(V)\cong (V\otimes V)^*\cong V^*\otimes V^*,
$$
mapping $\mathrm{Sym}(V)$ onto the symmetric square $S^2(V^*)$ and
$\mathrm{Alt}(V)$ onto the exterior square $\Lambda^2(V^*)$. Now
(\ref{tuy2}) and (\ref{biliboy}) make it clear that
$\mathrm{Bil}(V)\cong A$. Thus
$$
B\cong S^2(V^*), C\cong \Lambda^2(V^*),
$$
and, if $\ell\neq 2$, then
$$
S\cong S^2(V), T\cong \Lambda^2(V).
$$
Regardless of $\ell$, if $n=4$ we consider the map
$\Lambda^2(V)\times \Lambda^2(V)\to\Lambda^4(V)$ given by
\begin{equation}
\label{weg} (v_1\wedge v_2,v_3\wedge v_4)\mapsto v_1\wedge
v_2\wedge v_3\wedge v_4.
\end{equation}
Since $\dim\Lambda^4(V)=1$, (\ref{weg}) yields a non-degenerate
$\sl(V)$-invariant symmetric bilinear form on $\Lambda^2(V)$. This
form is alternating if $\ell=2$. Thus, if $F=F^2$ we obtain an
isomorphism $\sl(4)\cong \so(6)$ when $\ell\neq 2$ and an
imbedding $\sl(4)\hookrightarrow \sy(6)$ if $\ell=2$. }
\end{note}

\section{Basic Properties of $M(f)$}\label{secbas}

Recall the definition of the bilinear form $\vp:\gl(V)\times
\gl(V)\to F$ given in (\ref{tra}).

\begin{lemma}
\label{lem13} If $L$ is a Lie subalgebra of $\gl(V)$ and $M$ is an
$L$-submodule of $\gl(V)$ then
$$
M^\perp=\{x\in\gl(V)\,|\, \vp(M,x)=0\}
$$
is an $L$-submodule of $\gl(V)$.
\begin{proof} Let $z\in L$, $x\in M^\perp$ and $y\in M$. Then, by
Lemma \ref{lem12}, we have
$$
\vp([zx],y)=-\vp(x,[zy])=0.
$$
\end{proof}
\end{lemma}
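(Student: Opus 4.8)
The statement to be proved is Lemma~\ref{lem13}: if $L$ is a Lie subalgebra of $\gl(V)$ and $M$ is an $L$-submodule of $\gl(V)$, then the orthogonal complement $M^\perp$ of $M$ with respect to the trace form $\vp(x,y)=\tr(xy)$ is again an $L$-submodule of $\gl(V)$. The plan is to exploit the $\gl(V)$-invariance of $\vp$, which has already been recorded in Lemma~\ref{lem12}, and to move the action of $L$ across the form from the $M^\perp$-side to the $M$-side.

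Concretely, fix $z\in L$, $x\in M^\perp$ and $y\in M$; I want to show $\vp([z,x],y)=0$, since this for all such $y$ says precisely that $[z,x]\in M^\perp$, i.e.\ that $M^\perp$ is $L$-stable. By Lemma~\ref{lem12} the form $\vp$ is $\gl(V)$-invariant, so in particular $L$-invariant, which gives the identity
$$
\vp([z,x],y)+\vp(x,[z,y])=0.
$$
Now $M$ is an $L$-submodule and $z\in L$, so $[z,y]\in M$; since $x\in M^\perp$ this forces $\vp(x,[z,y])=0$, and hence $\vp([z,x],y)=0$. As $y\in M$ was arbitrary, $[z,x]\in M^\perp$, completing the argument. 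That $M^\perp$ is a subspace is immediate from the bilinearity of $\vp$.

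There is essentially no obstacle here: the entire content is the invariance identity from Lemma~\ref{lem12}, and the only thing to be careful about is the direction in which one transports the action — one must use that $M$ (not $M^\perp$) is known to be $L$-stable, so that the term $\vp(x,[z,y])$ can be killed. The computation is a one-line manipulation, so I would present it exactly as above with no further machinery.
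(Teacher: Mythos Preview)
Your proof is correct and follows exactly the same approach as the paper: use the $\gl(V)$-invariance of $\vp$ from Lemma~\ref{lem12} to get $\vp([z,x],y)=-\vp(x,[z,y])$, then kill the right-hand side using $[z,y]\in M$ and $x\in M^\perp$. The paper's version is the same one-line computation, just written more tersely.
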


\begin{cor}
\label{cor11} The space $L(f)^\perp$ is an $L(f)$-submodule of
$\gl(V)$.
\begin{proof} This follows from Lemmas \ref{lem11} and
\ref{lem13}.
\end{proof}
\end{cor}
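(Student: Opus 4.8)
The plan is to deduce Corollary~\ref{cor11} directly by combining the two preceding lemmas, taking the submodule $M$ of Lemma~\ref{lem13} to be $L(f)$ itself. First I would invoke Lemma~\ref{lem11} to record that $L(f)$ is a Lie subalgebra of $\gl(V)$. The point I would then make explicit is that this closure under the bracket says precisely that $L(f)$, regarded inside the adjoint $L(f)$-module $\gl(V)$ (with action $x\cdot y=[x,y]$), is an $L(f)$-submodule of $\gl(V)$: for $x\in L(f)$ and $y\in L(f)$ we have $x\cdot y=[x,y]\in L(f)$ because $L(f)$ is a subalgebra.

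Next I would apply Lemma~\ref{lem13} with $L=L(f)$ and $M=L(f)$. The lemma then immediately yields that
$$
L(f)^\perp=\{x\in\gl(V)\,|\,\vp(L(f),x)=0\}
$$
is an $L(f)$-submodule of $\gl(V)$, which is exactly the assertion of the corollary.

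The argument is a one-line formal consequence, so there is no real obstacle; the only thing worth being careful about is bookkeeping of the hypotheses. Specifically, one must check that the ambient $L(f)$-module in Lemma~\ref{lem13} — namely $\gl(V)$ under the adjoint action — is the same one in which $L(f)$ sits as a submodule, and that $\vp$ is the $\gl(V)$-invariant (hence $L(f)$-invariant) form of \S\ref{subtra}, used in Lemma~\ref{lem13} via Lemma~\ref{lem12}. Both are immediate from the setup in \S\ref{subcla} and \S\ref{subtra}, so no further computation is needed.
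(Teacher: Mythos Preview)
Your proposal is correct and matches the paper's own proof exactly: invoke Lemma~\ref{lem11} to see that $L(f)$ is a subalgebra (hence an $L(f)$-submodule of $\gl(V)$ under the adjoint action), and then apply Lemma~\ref{lem13} with $L=M=L(f)$ to conclude that $L(f)^\perp$ is an $L(f)$-submodule.
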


For a subspace $U$ of $V$ we define
$$
L(U)=\{v\in V\,|\, f(v,U)=0\}\text{ and }R(U)=\{v\in V\,|\,
f(U,v)=0\}.
$$
Note that $f$ is non-degenerate if $L(V)=0=R(V)$. We set
$Rad(f)=L(V)\cap R(V)$.

\begin{lemma}
\label{lem14} (a) If $\ell=2$ then $L(f)=M(f)$.

(b) If $\ell\neq 2$ then $L(f)\cap M(f)$ is the ideal of $L(f)$ of
all endomorphisms $x$ of $V$ that satisfy $xV\subseteq Rad(f)$.

(c) If $\ell\neq 2$ and $Rad(f)=0$ then $L(f)\cap M(f)=0$.

(d) If $\ell\neq 2$ and $f$ is non-degenerate then $L(f)\cap
M(f)=0$.
\begin{proof} (a) This is obvious.

(b) Let $x\in L(f)\cap M(f)$ and $v,w\in V$. Then
$$
-f(v,xw)=f(xv,w)=f(v,xw)\text{ and }-f(xv,w)=f(v,xw)=f(xv,w)
$$
so
$$
2f(v,xw)=0\text{ and }2f(xv,w)=0.
$$
If $\ell\neq 2$ then $xV\subseteq L(V)\cap R(V)=Rad(f)$. As the
intersection of $L(f)$-submodules of $\gl(V)$, we see that
$L(f)\cap M(f)$ is an ideal of $L(f)$. Moreover, any endomorphism
$x$ of $V$ that sends $V$ to $Rad(f)$ is clearly in $L(f)\cap
M(f)$.

(c) This follows from (b).

(d) This follows from (c).
\end{proof}
\end{lemma}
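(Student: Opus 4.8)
The plan is to treat the four parts in sequence, with part (b) carrying all the content and parts (a), (c), (d) being immediate consequences.

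For (a), I would simply note that when $\ell=2$ the sign in the defining condition of $L(f)$ is irrelevant: the identity $f(xv,w)=-f(v,xw)$ becomes $f(xv,w)=f(v,xw)$, which is exactly the condition defining $M(f)$. Hence the two subalgebras coincide.

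For (b), I would first record that $L(f)\cap M(f)$ is an ideal of $L(f)$: by Lemma~\ref{lem11}, $L(f)$ is a subalgebra of $\gl(V)$ and $M(f)$ is an $L(f)$-submodule of $\gl(V)$, so for $z\in L(f)$ and $y\in L(f)\cap M(f)$ the bracket $[z,y]$ lies simultaneously in $L(f)$ and in $M(f)$. Next, take $x\in L(f)\cap M(f)$ and $v,w\in V$. The relation coming from $M(f)$ reads $f(xv,w)=f(v,xw)$, while the one coming from $L(f)$ reads $f(xv,w)=-f(v,xw)$; adding these and using $\ell\neq 2$ gives $f(xv,w)=0$, and then also $f(v,xw)=0$, for all $v,w$. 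The first vanishing says $xv\in L(V)$ for every $v$, and the second says $xw\in R(V)$ for every $w$; therefore $xV\subseteq L(V)\cap R(V)=Rad(f)$. Conversely, if $x$ is any endomorphism of $V$ with $xV\subseteq Rad(f)$, then $f(xv,w)=0=f(v,xw)$ for all $v,w$, since $Rad(f)$ is annihilated by $f$ on both sides; this verifies at once the defining identity of $L(f)$ (the sign being immaterial against $0$) and that of $M(f)$, so $x\in L(f)\cap M(f)$.

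Parts (c) and (d) then follow at once: if $Rad(f)=0$, part (b) forces $xV=0$, i.e.\ $x=0$, for every $x\in L(f)\cap M(f)$, which is (c); and if $f$ is non-degenerate then $L(V)=0=R(V)$, hence $Rad(f)=0$ and (c) applies, giving (d). Every step here is a one-line manipulation of the bilinear form, so there is no genuine obstacle; the only point needing a little care is to match the vanishing $f(xv,w)=0$ (for all $v,w$) correctly to membership of $xV$ in the left and right radicals $L(V)$ and $R(V)$, and to recall that these intersect in $Rad(f)$ by definition.
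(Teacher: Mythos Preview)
Your proof is correct and follows essentially the same approach as the paper: both arguments combine the $L(f)$ and $M(f)$ conditions to obtain $2f(xv,w)=0=2f(v,xw)$, divide by $2$, and then read off membership of $xV$ in $L(V)\cap R(V)=Rad(f)$, with the converse and the ideal property handled exactly as you do.
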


\begin{lemma}
\label{lem15} If $f$ is non-degenerate and $\ell\neq 2$ then
$M(f)\subseteq L(f)^\perp$.
\begin{proof} Let ${\mathcal B}$ be a basis of $V$ and let $A$ be the
Gram matrix of $f$ relative to ${\mathcal B}$. Let $x\in L(f)$ and
$y\in M(f)$ have respective matrices $X,Y\in\gl(m)$  relative to
${\mathcal B}$. Then
$$
X'A=-AX,\quad Y'A=AY.
$$
Since $f$ is non-degenerate, $A$ is invertible, whence
\begin{equation}
\label{traq} X=-A^{-1}X'A,\quad Y=A^{-1}Y'A.
\end{equation}
It follows that
$$
XY=(-A^{-1}X'A)(A^{-1}Y'A)=-A^{-1}X'Y'A.
$$
Taking traces yields
$$
\tr(XY)=-\tr(X'Y')=-\tr(Y'X')=-\tr((XY)')=-\tr(XY).
$$
Therefore $2\tr(XY)=0$. Since $\ell\neq 2$, we infer $\tr(XY)=0$.
\end{proof}
\end{lemma}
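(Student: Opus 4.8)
The plan is to reduce the claim to a matrix computation. Since congruence sends $L(f)$ to $L(A)$ and $M(f)$ to $M(A)$ for the Gram matrix $A$ of $f$ with respect to any chosen basis $\mathcal B$ of $V$, and the trace form $\vp(x,y)=\tr(xy)$ is unchanged under this identification, it suffices to show that $\tr(XY)=0$ whenever $X\in L(A)$, $Y\in M(A)$, with $A$ invertible (non-degeneracy of $f$) and $\ell\neq 2$. First I would record the defining relations $X'A=-AX$ and $Y'A=AY$, and, using invertibility of $A$, rewrite them as $X=-A^{-1}X'A$ and $Y=A^{-1}Y'A$. Multiplying, the $A$'s telescope: $XY=(-A^{-1}X'A)(A^{-1}Y'A)=-A^{-1}X'Y'A=-A^{-1}(YX)'A$.

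The second step is to take traces. Trace is conjugation-invariant, so $\tr(XY)=-\tr((YX)')=-\tr(YX)=-\tr(XY)$, using that a matrix and its transpose have equal trace and that $\tr(YX)=\tr(XY)$. Hence $2\tr(XY)=0$, and since $\ell\neq 2$ we conclude $\tr(XY)=0$, i.e., $\vp(x,y)=0$ for all $x\in L(f)$, $y\in M(f)$, which is exactly $M(f)\subseteq L(f)^\perp$.

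I do not expect any serious obstacle here: the argument is a short manipulation, and the only subtlety is making sure the trace form survives the passage to matrices (it does, as remarked in \S\ref{subcla} and \S\ref{subtra}) and that invertibility of $A$ is available (it is, since $f$ is assumed non-degenerate). One could alternatively phrase the computation intrinsically on $\gl(V)$ via the adjoint of an endomorphism with respect to $f$, but the matrix version is cleanest. It is perhaps worth noting for the record that combined with Lemma~\ref{lem14}(d) and a dimension count (as in Theorem~\ref{purr} and Theorems~\ref{intro51}, \ref{intro61}), this inclusion will in fact be an equality $M(f)=L(f)^\perp$ when $f$ is non-degenerate and $\ell\neq 2$; but that refinement is not needed for the present statement.
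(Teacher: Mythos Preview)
Your proof is correct and follows essentially the same route as the paper: pass to matrices via a Gram matrix $A$, use invertibility of $A$ to write $X=-A^{-1}X'A$ and $Y=A^{-1}Y'A$, multiply, take traces, and conclude from $\ell\neq 2$. The only cosmetic difference is that you group $X'Y'=(YX)'$ before taking the trace, whereas the paper manipulates the trace of $X'Y'$ directly; the content is identical.
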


Suppose that $f$ is non-degenerate. Then given, $x\in\gl(V)$,
there exists a unique $x^*\in\gl(V)$, the adjoint of $x$,
satisfying
$$
f(xv,w)=f(v,x^*w),\quad v,w\in V.
$$
In matrix terms, if ${\mathcal B}$ is a basis of $V$ and $A,X,X^*$
are the matrices of $f,x,x^*$, then
$$
X'A=AX^*,
$$
which has the unique solution
$$
X^*=A^{-1}X'A.
$$
Observe that $x\in L(f)\Leftrightarrow x^*=-x$ and $y\in
M(f)\Leftrightarrow y^*=y$.

\begin{lemma}
\label{lem16} Suppose $f$ is non-degenerate as well as symmetric
or skew-symmetric. Then
$$
x^{**}=x,\quad x\in\gl(V).
$$
\begin{proof} Let $v,w\in V$. Then
$$
f(xv,w)=f(v,x^*w)=\pm f(x^*w,v)=\pm f(w,x^{**}v)=(\pm 1)^2f(x^{**}
v,w).
$$
\end{proof}
\end{lemma}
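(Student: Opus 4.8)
The plan is to unwind the definition of the adjoint twice, using that $f$ is symmetric or skew-symmetric to flip the two arguments at the right moments. Write $\va=1$ if $f$ is symmetric and $\va=-1$ if $f$ is skew-symmetric, so that $f(v,w)=\va f(w,v)$ for all $v,w\in V$. Since $f$ is non-degenerate, the adjoint $x\mapsto x^*$ on $\gl(V)$ is well defined, so in particular $x^{**}=(x^*)^*$ makes sense and satisfies $f(x^*v,w)=f(v,x^{**}w)$ for all $v,w\in V$.

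First I would fix $x\in\gl(V)$ and arbitrary $v,w\in V$ and run the chain
$$
f(xv,w)=f(v,x^*w)=\va f(x^*w,v)=\va f(w,x^{**}v)=\va^2 f(x^{**}v,w),
$$
where the first and third equalities are the defining property of the adjoint (applied to $x$ and to $x^*$, respectively) and the second and fourth are the $\va$-symmetry of $f$. Since $\va^2=1$ irrespective of $\ell$, this yields $f(xv,w)=f(x^{**}v,w)$ for all $v,w\in V$. Then, holding $v$ fixed and letting $w$ range over $V$, non-degeneracy of $f$ forces $xv=x^{**}v$; as $v$ was arbitrary, $x=x^{**}$.

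There is essentially no obstacle here: the only points requiring care are that the adjoint exists and is unique (guaranteed by the non-degeneracy of $f$, established just before the statement) and that $\va^2=1$ holds even when $\ell=2$, so that no hypothesis on the characteristic is needed. One could equally argue in matrix terms: with $A$ the Gram matrix of $f$ in some basis one has $X^*=A^{-1}X'A$, whence $X^{**}=A^{-1}A'X(A')^{-1}A$, and this equals $X$ because $A'=\va A$ with $\va^2=1$.
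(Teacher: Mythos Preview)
Your proof is correct and follows exactly the same chain of equalities as the paper, with $\va=\pm 1$ in place of the paper's $\pm$; you simply make the final step (invoking non-degeneracy to conclude $x=x^{**}$) explicit where the paper leaves it implicit. The added matrix argument is a correct alternative but not needed.
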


\begin{lemma}
\label{lem17} Suppose $f$ is non-degenerate, symmetric or
skew-symmetric, and $\ell\neq 2$. Then
$$
\gl(V)=L(f)\oplus M(f).
$$
\begin{proof} Given $z\in\gl(V)$ let $x=(z-z^*)/2$ and
$y=(z+z^*)/2$. Then $z=x+y$. Moreover, by Lemma \ref{lem16}, $x\in
L(f)$ and $y\in M(f)$. Furthermore, $L(f)\cap M(f)=0$ by Lemma
\ref{lem14}, as required.
\end{proof}
\end{lemma}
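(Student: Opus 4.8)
The plan is to realize $L(f)$ and $M(f)$ as the $-1$ and $+1$ eigenspaces of a single linear involution of the vector space $\gl(V)$, and then use the standard eigenspace decomposition that is available because $\ell\neq 2$. The involution is the adjoint operation $x\mapsto x^*$, which is well-defined precisely because $f$ is non-degenerate: $x^*$ is the unique endomorphism satisfying $f(xv,w)=f(v,x^*w)$ for all $v,w\in V$. By Lemma \ref{lem16}, the hypothesis that $f$ is symmetric or skew-symmetric guarantees $x^{**}=x$, so $x\mapsto x^*$ is a linear involution of $\gl(V)$; and, as noted just before Lemma \ref{lem16}, $L(f)=\{x\in\gl(V)\,|\,x^*=-x\}$ while $M(f)=\{y\in\gl(V)\,|\,y^*=y\}$.

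With this in place the proof has two short steps. First I would show $\gl(V)=L(f)+M(f)$: given $z\in\gl(V)$, since $\ell\neq 2$ we may set $x=(z-z^*)/2$ and $y=(z+z^*)/2$, so that $z=x+y$, and applying $*$ (using $z^{**}=z$) yields $x^*=-x$ and $y^*=y$, i.e.\ $x\in L(f)$ and $y\in M(f)$. Second I would show the sum is direct, i.e.\ $L(f)\cap M(f)=0$; this is exactly Lemma \ref{lem14}(d), since $f$ is non-degenerate and $\ell\neq 2$ forces any $x$ in both $L(f)$ and $M(f)$ to satisfy $2f(xv,w)=0$ for all $v,w$, hence $xV\subseteq Rad(f)=0$. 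Combining the two steps gives $\gl(V)=L(f)\oplus M(f)$.

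There is no genuine obstacle here; all the work is carried by the preliminary lemmas, so the only thing to be careful about is bookkeeping on the hypotheses. Both the averaging trick in the first step and the vanishing of the intersection in the second really do use $\ell\neq 2$, and the involutivity $x^{**}=x$ (which is what makes the two averaging operators land in the eigenspaces $L(f)$ and $M(f)$) genuinely uses that $f$ is symmetric or skew-symmetric, since a general non-degenerate $f$ need not satisfy this. As an alternative to the intersection argument one could instead count dimensions: the formulas of \S\ref{subcla} give $\dim L(f)+\dim M(f)={{m}\choose{2}}+{{m+1}\choose{2}}=m^2=\dim\gl(V)$, which together with $\gl(V)=L(f)+M(f)$ already forces directness; but the eigenspace argument is cleaner and self-contained.
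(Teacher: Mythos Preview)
Your proposal is correct and matches the paper's own proof essentially line for line: the paper also sets $x=(z-z^*)/2$, $y=(z+z^*)/2$, invokes Lemma~\ref{lem16} to place $x\in L(f)$ and $y\in M(f)$, and cites Lemma~\ref{lem14} for $L(f)\cap M(f)=0$. Your framing in terms of the eigenspace decomposition of the involution $x\mapsto x^*$ is a nice way to package the idea, but the underlying argument is identical.
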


\begin{cor}
\label{cor12} Suppose $f$ is non-degenerate, symmetric or
skew-symmetric, and $\ell\neq 2$. Then
$$
\gl(V)=L(f)\perp M(f).
$$
\begin{proof} Since $\varphi$ is non-degenerate, this follows from Lemmas
\ref{lem15} and \ref{lem17}.
\end{proof}
\end{cor}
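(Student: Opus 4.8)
The plan is to obtain Corollary~\ref{cor12} as an immediate synthesis of the two preceding lemmas, so very little genuinely new is required. First I would invoke Lemma~\ref{lem17}: under the stated hypotheses ($f$ non-degenerate, symmetric or skew-symmetric, and $\ell\neq 2$) we already have the vector-space direct sum $\gl(V)=L(f)\oplus M(f)$, obtained there by splitting any $z\in\gl(V)$ as $z=(z-z^*)/2+(z+z^*)/2$ and observing, via Lemma~\ref{lem16}, that the two pieces lie in $L(f)$ and $M(f)$ respectively, while $L(f)\cap M(f)=0$ by Lemma~\ref{lem14}. Thus the only additional ingredient needed to promote this to an \emph{orthogonal} direct sum is that the two summands pair trivially under $\vp$.

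That ingredient is exactly Lemma~\ref{lem15}, which asserts $M(f)\subseteq L(f)^{\perp}$, i.e.\ $\vp(x,y)=0$ for all $x\in L(f)$ and $y\in M(f)$; its proof converts the adjoint relations for $x$ and $y$ into matrix form and uses $\tr((XY)')=\tr(XY)$ together with $\ell\neq 2$. Combining the direct sum from Lemma~\ref{lem17} with the mutual orthogonality from Lemma~\ref{lem15} is precisely the assertion $\gl(V)=L(f)\perp M(f)$, which completes the argument. There is no real obstacle here: all the content has been front-loaded into Lemmas~\ref{lem15}--\ref{lem17}, and the corollary is a one-line assembly of them.

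For completeness one may append the remark that, since $\vp$ is non-degenerate on all of $\gl(V)$ and the decomposition is orthogonal, $\vp$ restricts to a non-degenerate form on each of $L(f)$ and $M(f)$; moreover a dimension count comparing $\dim L(f)^{\perp}=\dim\gl(V)-\dim L(f)$ with $\dim M(f)$ (exactly as in the proof of Theorem~\ref{purr}) upgrades the inclusion of Lemma~\ref{lem15} to the equality $M(f)=L(f)^{\perp}$. This is not needed for the statement as phrased, but it records that the orthogonal complement of $L(f)$ inside $\gl(V)$ is precisely $M(f)$ under the present hypotheses.
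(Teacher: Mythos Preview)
Your proposal is correct and follows exactly the paper's approach: the paper's proof is the single sentence ``Since $\varphi$ is non-degenerate, this follows from Lemmas~\ref{lem15} and~\ref{lem17},'' and you have simply unpacked what those two lemmas contribute (direct sum from Lemma~\ref{lem17}, mutual orthogonality from Lemma~\ref{lem15}). Your closing remark that $M(f)=L(f)^{\perp}$ follows by a dimension count is a correct optional addendum, not part of the statement itself.
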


\begin{lemma}
\label{lem18} Suppose $f$ is non-degenerate and $\ell\neq 2$. Then
$L(f)\subseteq \sl(V)$.
\begin{proof} By definition $\s\subseteq M(f)$ and by Lemma
\ref{lem15} we have $M(f)\subseteq L(f)^\perp$. Hence $\s\subseteq
L(f)^\perp$. Since $\vp$ is non-degenerate, this yields
$$
L(f)=L(f)^{\perp\perp}\subseteq \s^\perp=\sl(V).
$$
Alternatively, let $x\in L(f)$ and take traces in (\ref{traq}) to
get $\tr(x)=-\tr(x)$.
\end{proof}
\end{lemma}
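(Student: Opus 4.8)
The plan is to exploit the characteristic hypothesis $\ell\neq 2$ together with the invariance of trace under transposition and conjugation. The most direct route: fix a basis $\mathcal B$ of $V$ and let $A\in\gl(m)$ be the Gram matrix of $f$; since $f$ is non-degenerate, $A$ is invertible. Given $x\in L(f)$ with matrix $X$, the defining relation $X'A=-AX$ rearranges to $X=-A^{-1}X'A$, which is precisely (\ref{traq}). Taking traces and using that trace is invariant under both conjugation and transposition gives $\tr(X)=-\tr(A^{-1}X'A)=-\tr(X')=-\tr(X)$, so $2\tr(X)=0$. Since $\ell\neq 2$ we conclude $\tr(X)=0$, that is, $x\in\sl(V)$.

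An alternative, more structural argument uses the trace form $\vp$ of \S\ref{subtra}. First observe that $\s\subseteq M(f)$, since every scalar operator trivially satisfies $f(\lambda v,w)=f(v,\lambda w)$. Next, Lemma \ref{lem15} gives $M(f)\subseteq L(f)^\perp$ under the present hypotheses, so $\s\subseteq L(f)^\perp$. Since $\vp$ is non-degenerate on the finite-dimensional space $\gl(V)$, the operation of taking $\vp$-orthogonal complements reverses inclusions and is involutive on subspaces, hence $L(f)=L(f)^{\perp\perp}\subseteq \s^\perp=\sl(V)$, where the last equality is the identification $\s^\perp=\sl(m)$ recorded in \S\ref{subtra}.

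Neither argument presents a genuine obstacle; this is essentially a one-line consequence of $\chr F\neq 2$. The only point requiring a moment's care is making sure the ingredients are already in place: (\ref{traq}) is established inside the proof of Lemma \ref{lem15}, while $\s\subseteq M(f)$ and $\s^\perp=\sl(m)$ are immediate from the definitions. I would present the matrix computation as the main proof for its transparency, and record the $\vp$-duality version as an afterthought, since it ties the lemma to the recurring theme of the section (invariant trace form, orthogonal complements).
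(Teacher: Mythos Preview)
Your proposal is correct and matches the paper's proof almost verbatim: both the $\vp$-duality argument ($\s\subseteq M(f)\subseteq L(f)^\perp$ hence $L(f)\subseteq\s^\perp=\sl(V)$) and the direct trace computation from (\ref{traq}) appear in the paper. The only difference is presentational---the paper leads with the duality argument and gives the matrix computation as the alternative, whereas you reverse this emphasis.
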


Since $\sl(V)$ is an ideal of $\gl(V)$ it follows from Lemma
\ref{lem11} that $M(f)\cap \sl(V)$ is an $L(f)$-submodule of
$\gl(V)$.

\begin{cor}
\label{cor13} Suppose $f$ is non-degenerate, symmetric or
skew-symmetric, and $\ell\nmid 2m$. Then $\gl(V)$ has the
following decomposition into perpendicular $L(f)$-submodules:
$$
\gl(V)=L(f)\perp (M(f)\cap \sl(V))\perp\s.
$$
\begin{proof} If we had $M(f)\subseteq \sl(V)$ then $\ell\neq 2$ together with Lemmas \ref{lem17} and \ref{lem18} would imply $\gl(V)\subseteq
\sl(V)$, which is impossible. It follows that $M(f)\cap \sl(V)$ is
a hyperplane of $M(f)$. Since $\ell\nmid m$, then $\s\cap M(f)\cap
\sl(V)=0$, with $\s$ and $M(f)\cap \sl(V)$ perpendicular to each
other, so $M(f)=(M(f)\cap \sl(V))\perp\s$. Replacing this in
Corollary \ref{cor12} yields the desired result.
\end{proof}
\end{cor}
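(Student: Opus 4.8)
The plan is to build the three-term decomposition out of the two-term orthogonal decomposition $\gl(V)=L(f)\perp M(f)$ furnished by Corollary~\ref{cor12}, by splitting the summand $M(f)$ further. First I would record that $\ell\nmid 2m$ is exactly the conjunction of $\ell\neq 2$ and $\ell\nmid m$, so that Corollary~\ref{cor12} applies and gives $\gl(V)=L(f)\perp M(f)$; here $M(f)$ is an $L(f)$-submodule by Lemma~\ref{lem11}, and $L(f)$ is one because it is a subalgebra.

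The core of the argument is to establish $M(f)=(M(f)\cap\sl(V))\perp\s$. We have $\s\subseteq M(f)$ since $f(\lambda v,w)=f(v,\lambda w)$ for any scalar $\lambda$, and $\s\cap\sl(V)=0$ because $\ell\nmid m$ (as recorded in the preliminaries), hence also $\s\cap(M(f)\cap\sl(V))=0$. So it suffices to check that $M(f)\cap\sl(V)$ is a hyperplane of $M(f)$: a dimension count then forces $M(f)=(M(f)\cap\sl(V))\oplus\s$. For the hyperplane claim I would argue by contradiction: if $M(f)\subseteq\sl(V)$, then combining $L(f)\subseteq\sl(V)$ (Lemma~\ref{lem18}) with $\gl(V)=L(f)\oplus M(f)$ (Lemma~\ref{lem17}) would yield $\gl(V)\subseteq\sl(V)$, which is impossible. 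Thus $M(f)\not\subseteq\sl(V)$, and since $\sl(V)$ is a hyperplane of $\gl(V)$, intersecting with $M(f)$ drops the dimension by exactly one. Orthogonality of $\s$ and $M(f)\cap\sl(V)$ under $\vp$ is immediate from $\s=\sl(V)^\perp$ (see \S\ref{subtra}) together with $M(f)\cap\sl(V)\subseteq\sl(V)$.

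Substituting $M(f)=(M(f)\cap\sl(V))\perp\s$ into $\gl(V)=L(f)\perp M(f)$ then yields the desired $\gl(V)=L(f)\perp(M(f)\cap\sl(V))\perp\s$. To finish I would observe that each of the three summands is an $L(f)$-submodule: $L(f)$ acts on itself by the adjoint action since it is a subalgebra, $\s$ is a central ideal of $\gl(V)$, and $M(f)\cap\sl(V)$ is an $L(f)$-submodule as the intersection of the $L(f)$-submodule $M(f)$ with the ideal $\sl(V)$, as already noted just before the statement.

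I do not expect a genuine obstacle here; the result is a corollary of the lemmas already in hand. The one step that deserves a moment of thought is the non-containment $M(f)\not\subseteq\sl(V)$, and it is precisely here that $\ell\neq 2$ enters, through Lemmas~\ref{lem17} and~\ref{lem18}, while the hypothesis $\ell\nmid m$ is needed so that $\s$ is not contained in $\sl(V)$ and the summands $\s$ and $M(f)\cap\sl(V)$ meet trivially.
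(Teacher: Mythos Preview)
Your proposal is correct and follows essentially the same route as the paper's proof: obtain $\gl(V)=L(f)\perp M(f)$ from Corollary~\ref{cor12}, show $M(f)\not\subseteq\sl(V)$ by the same contradiction using Lemmas~\ref{lem17} and~\ref{lem18}, and then split $M(f)=(M(f)\cap\sl(V))\perp\s$ using $\ell\nmid m$. You have simply spelled out a few steps (such as $\s\subseteq M(f)$, the source of orthogonality, and the $L(f)$-submodule status of each summand) that the paper leaves implicit.
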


\section{Multiple identifications}\label{secid}

Let $U$ and $W$ be $L$-modules for a Lie algebra $L$. Then $U\otimes W$ becomes an $L$-module via
$$
x\cdot (u\otimes v)=x\cdot u\otimes v+u\otimes x\cdot v.
$$
We may view the symmetric and exterior squares $S^2(U)$ and
$\Lambda^2(U)$ as $L$-submodules of $U\otimes U$. If $\ell=2$ then
$\Lambda^2(U)\subseteq S^2(U)$, while if $\ell\neq 2$ then
$U\otimes U=\Lambda^2(U)\oplus S^2(U)$.

Suppose $f$ is non-degenerate. Since $f$ is $L(f)$-invariant, the
map $\Gamma_1:V\to V^*$ induced by $f$, namely
$$
\Gamma_1(v)=f(v,-),
$$
is an isomorphism of $L(f)$-modules. This, in turn, yields the
$L(f)$-isomorphism $\Gamma_1\otimes 1_V:V\otimes V\to V^*\otimes
V$. On the other hand, we have the natural $\gl(V)$-isomorphism
$\Gamma_2:V^*\otimes V\to\gl(V)$, given by
$$
\Gamma_2(\delta\otimes w)(u)=\delta(u)w,\quad \delta\in
V^*,\,u,w\in V.
$$
It follows that $\Gamma=\Gamma_2\circ(\Gamma_1\otimes 1_V)$ is an
$L(f)$-isomorphism $V\otimes V\to\gl(V)$, given by
$$
\Gamma(v\otimes w)(u)=f(v,u)w,\quad u,v,w\in V.
$$

We wonder what are $L(f)$-submodules of $V\otimes V$ corresponding
to $L(f)$ and $M(f)$ under~$\Gamma$. If $f$ is symmetric or
skew-symmetric, the answer is as follows.

\begin{prop}
\label{gam} Suppose that $f$ is non-degenerate.

(a) If $\ell\neq 2$ and $f$ is skew-symmetric, then $\Gamma$ sends
$\Lambda^2(V)$ onto $M(f)$ and $S^2(V)$ onto $L(f)$.

(b) If $\ell\neq 2$ and $f$ is symmetric, then $\Gamma$ sends
$\Lambda^2(V)$ onto $L(f)$ and $S^2(V)$ onto $M(f)$.

(c) If $\ell=2$ and $f$ is symmetric then $\Gamma$ sends $S^2(V)$
onto $L(f)=M(f)$.
\end{prop}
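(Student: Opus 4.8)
The plan is to compute explicitly how the isomorphism $\Gamma:V\otimes V\to\gl(V)$ interacts with the adjoint operation $x\mapsto x^*$ induced by $f$, and then match eigenspaces of the relevant involution on $V\otimes V$. Recall from the discussion before Lemma \ref{lem16} that $x\in L(f)\Leftrightarrow x^*=-x$ and $y\in M(f)\Leftrightarrow y^*=y$, and that when $f$ is symmetric or skew-symmetric we have $x^{**}=x$ (Lemma \ref{lem16}), so $\gl(V)$ decomposes (when $\ell\neq 2$) as the $\pm1$ eigenspaces of $*$. The key computational step is the identity
$$
\Gamma(v\otimes w)^* = \epsilon\,\Gamma(w\otimes v),\qquad v,w\in V,
$$
where $\epsilon=+1$ if $f$ is symmetric and $\epsilon=-1$ if $f$ is skew-symmetric. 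To verify this, I would simply unwind the definitions: for $u\in V$, $\Gamma(v\otimes w)(u)=f(v,u)w$, and the adjoint $z^*$ of $z=\Gamma(v\otimes w)$ is characterized by $f(zu',u)=f(u',z^*u)$ for all $u,u'$. Computing the left side gives $f(v,u')f(w,u)$, and one checks that $z^*=\Gamma(w\otimes v)$ precisely when $f(u',x)=\epsilon f(x,u')$, i.e. exactly under the symmetry hypothesis on $f$ with the stated sign.

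Given this identity, parts (a) and (b) follow immediately. Under the swap involution $\tau$ on $V\otimes V$ (with $\tau(v\otimes w)=w\otimes v$), we have $S^2(V)$ and $\Lambda^2(V)$ as the $+1$ and $-1$ eigenspaces when $\ell\neq 2$. The identity shows that $\Gamma$ intertwines $\tau$ with $\epsilon\cdot{}^*$. Hence if $f$ is skew-symmetric ($\epsilon=-1$), $\Gamma$ sends the $+1$ eigenspace $S^2(V)$ of $\tau$ to the $-1$ eigenspace of $*$, which is $L(f)$, and $\Lambda^2(V)$ to the $+1$ eigenspace of $*$, which is $M(f)$; this is (a). If $f$ is symmetric ($\epsilon=+1$), the signs match directly: $\Gamma$ sends $S^2(V)$ to $M(f)$ and $\Lambda^2(V)$ to $L(f)$; this is (b). One should also note dimensions agree as a sanity check, using the formulas for $\dim L(f)$ and $\dim M(f)$ from \S\ref{subcla} together with $\dim S^2(V)=\binom{m+1}{2}$, $\dim\Lambda^2(V)=\binom{m}{2}$.

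For part (c), where $\ell=2$, the eigenspace picture breaks down since $\tau$ is no longer diagonalizable and $\Lambda^2(V)\subseteq S^2(V)$. Instead I would argue directly: Lemma \ref{lem14}(a) gives $L(f)=M(f)$, and since $\tau$ acts trivially modulo $\Lambda^2(V)$ — that is, $v\otimes w + w\otimes v \in \Lambda^2(V)$ in characteristic $2$ — the same identity $\Gamma(v\otimes w)^*=\Gamma(w\otimes v)$ shows that for a symmetric tensor $z\in S^2(V)$ (by definition, $S^2(V)$ is the image of symmetrization, or equivalently the submodule generated by the $v\otimes v$), we have $\Gamma(z)^*=\Gamma(z)$, so $\Gamma(S^2(V))\subseteq M(f)=L(f)$. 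Equality then follows from the dimension count: when $\ell=2$ and $f$ is symmetric non-alternating, $\dim L(f)=\binom{m+1}{2}=\dim S^2(V)$. The main obstacle is really just getting the sign $\epsilon$ right in the adjoint identity and being careful about the characteristic-$2$ subtleties in the definition of $S^2(V)$ versus $\Lambda^2(V)$; once that identity is pinned down, everything else is bookkeeping.
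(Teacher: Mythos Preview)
Your approach is essentially the paper's: the paper computes $f(\Gamma(v\otimes w)u_1,u_2)=f(v,u_1)f(w,u_2)$ and $f(u_1,\Gamma(v\otimes w)u_2)=f(u_1,w)f(v,u_2)$, which is exactly your adjoint identity $\Gamma(v\otimes w)^*=\epsilon\,\Gamma(w\otimes v)$, and then verifies the defining conditions of $L(f)$ and $M(f)$ on spanning sets of $S^2(V)$ and $\Lambda^2(V)$ together with the dimension count. Two small caveats in your treatment of (c): in this paper $S^2(V)$ is the $\tau$-fixed subspace of $V\otimes V$ (not the image of symmetrization, which in characteristic $2$ equals $\Lambda^2(V)$), and the dimension formula $\dim L(f)=\binom{m+1}{2}$ also holds when $f$ is alternating, so your argument covers the full statement of (c), not just the non-alternating case.
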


\begin{proof} The dimensions of $L(f)$ and $M(f)$ we computed in
\ref{subcla}. On the other hand, it is well-known that
$$
\dim S^2(V)={{m+1}\choose{2}}\text{ and  }\dim
\Lambda^2(V)={{m}\choose{2}}.
$$
Furthermore, given $u_1,u_2,v,w\in V$, we have
$$
f(\Gamma(v\otimes w)u_1,u_2)=f(v,u_1)f(w,u_2),
$$
$$
f(u_1,\Gamma(v\otimes w)u_2)=f(u_1,w)f(v,u_2).
$$
Since $S^2(V)$ is spanned by all $v\otimes v$ and $v\otimes
w+w\otimes v$, and $\Lambda^2(V)$ by all $v\otimes w-w\otimes v$,
the above information combines to yield the desired result.
\end{proof}

\begin{theorem}\label{matver} Suppose that $\ell\neq 2$, that $m=2n$, and that $f$ is non-degenerate and skew-symmetric.
Then the following $L(f)$-modules are isomorphic:

(1) $M(f)=\{y\in\gl(V)\,|\, f(yv,w)=f(v,yw)\text{ for all }v,w\in
V\}$.

(2) $\Lambda^2(V)$, the second exterior power of the natural $L(f)$-module $V$.

(3) $L(f)^\perp$, the orthogonal complement of $L(f)$ relative to the bilinear form (\ref{tra}).

(4) The space of all
\begin{equation}
\label{qa}
\left(\begin{array}{cc}
    a & b \\
   c & a'
  \end{array}
\right)\in\gl(2n)
\end{equation}
such that $a\in\gl(n)$ and $b,c\in\gl(n)$ are skew-symmetric.
\end{theorem}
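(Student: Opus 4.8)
The plan is to assemble the four stated identifications from facts already established; each equivalence is a repackaging of an earlier result, so the proof is really a matter of chaining them together through the module $M(f)$ of item~(1).

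First I would identify (1) with (4). Since $f$ is non-degenerate and alternating, $m=2n$, and by \S\ref{subcla} there is a basis $\mathcal B$ of $V$ relative to which $f$ has Gram matrix $J$ as in \eqref{defjo}. The Lie isomorphism $M_{\mathcal B}\colon\gl(V)\to\gl(2n)$ carries $L(f)$ onto $L(J)$ and $M(f)$ onto $M(J)$, and the explicit computation recorded there shows that $M(J)$ is precisely the set of matrices \eqref{qa} with $a\in\gl(n)$ arbitrary and $b,c\in\gl(n)$ skew-symmetric. As $M_{\mathcal B}$ is compatible with brackets, hence with adjoint actions, it is in particular an isomorphism of $L(f)$-modules, which identifies (1) and (4).

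Next I would show (1) equals (3). The hypotheses of Corollary \ref{cor12} hold here ($f$ non-degenerate and skew-symmetric, $\ell\neq2$), so $\gl(V)=L(f)\perp M(f)$. Since $\vp$ is non-degenerate, $\dim L(f)^\perp=\dim\gl(V)-\dim L(f)=\dim M(f)$, and $M(f)\subseteq L(f)^\perp$ by the orthogonality, whence $M(f)=L(f)^\perp$; by Corollary \ref{cor11} this is an $L(f)$-submodule, and (1) and (3) coincide. Finally, (1) is isomorphic to (2) by Proposition \ref{gam}(a), which asserts that the $\gl(V)$-isomorphism $\Gamma\colon V\otimes V\to\gl(V)$ built in \S\ref{secid} restricts to an $L(f)$-isomorphism $\Lambda^2(V)\to M(f)$. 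Transitivity of isomorphism then yields the theorem.

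I do not anticipate a genuine obstacle. The only point that warrants a line of care is confirming that the $L(f)$-module structure attached to item (4) is the one it inherits as a submodule of the adjoint module $\gl(2n)$ for $L(J)$, and that this is compatible with $M(f)$ under $M_{\mathcal B}$ — but this is automatic from $M_{\mathcal B}$ being a Lie isomorphism intertwining the adjoint representations of $\gl(V)$ and $\gl(2n)$.
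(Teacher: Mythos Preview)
Your proposal is correct and follows essentially the same approach as the paper: it invokes Proposition~\ref{gam}(a) for $M(f)\cong\Lambda^2(V)$, Corollary~\ref{cor12} for $M(f)=L(f)^\perp$, and the matrix description from \S\ref{subcla} for the identification with item~(4). You supply a bit more detail than the paper's terse one-line proof (in particular the dimension count extracting $M(f)=L(f)^\perp$ from the orthogonal decomposition, and the remark that $M_{\mathcal B}$ intertwines the adjoint actions), but the logical route is identical.
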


\begin{proof} We know from Proposition \ref{gam} that $M(f)\cong \Lambda^2(V)$, while Corollary \ref{cor12} shows that $M(f)=L(f)^\perp$ relative to $\vp$.
The matrix description of $M(f)$ is taken from \S\ref{subcla}.
\end{proof}

In a similar manner, we derive the following result.

\begin{theorem}\label{matver2} Suppose that $\ell\neq 2$, $f$ is non-degenerate and symmetric.
Then the following $L(f)$-modules are isomorphic:

(1) $M(f)=\{y\in\gl(V)\,|\, f(yv,w)=f(v,yw)\text{ for all }v,w\in
V\}$.

(2) $S^2(V)$, the second symmetric power of the natural
$L(f)$-module $V$.

(3) $L(f)^\perp$, the orthogonal complement of $L(f)$ relative to the bilinear form (\ref{tra}).

(4) The space of all $A\in\gl(m)$ satisfying
$d_iA_{ij}-d_jA_{ji}=0$ for all $1\leq i,j\leq m$, where
$D=\mathrm{diag}(d_1,\dots,d_m)$ is the Gram matrix of $f$
relative to a basis of $V$.

In particular, if $F=F^2$ then $M(f)$ is isomorphic to the
$L(f)$-module of all symmetric $m\times m$ matrices.
\end{theorem}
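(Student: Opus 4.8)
The plan is to assemble the statement from results already established, paralleling the proof of Theorem~\ref{matver} with the roles of the symmetric and exterior squares interchanged. First I would obtain the isomorphism between (1) and (2): since $\ell\neq 2$ and $f$ is non-degenerate and symmetric, Proposition~\ref{gam}(b) tells us that the $L(f)$-isomorphism $\Gamma\colon V\otimes V\to\gl(V)$ restricts to an isomorphism $S^2(V)\to M(f)$, so $M(f)\cong S^2(V)$ as $L(f)$-modules.

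Next I would identify $M(f)$ with $L(f)^\perp$. Lemma~\ref{lem15} gives $M(f)\subseteq L(f)^\perp$, and Corollary~\ref{cor12} gives $\gl(V)=L(f)\perp M(f)$; since $\vp$ is non-degenerate we have $\dim L(f)+\dim L(f)^\perp=\dim\gl(V)=\dim L(f)+\dim M(f)$, so $M(f)=L(f)^\perp$, and by Lemma~\ref{lem13} (or Corollary~\ref{cor11}) this is an $L(f)$-submodule, matching (3). For (4), I would recall from \S\ref{subcla} that $f$, being non-degenerate, symmetric and --- as $\ell\neq 2$ --- automatically non-alternating, admits a diagonal Gram matrix $D=\mathrm{diag}(d_1,\dots,d_m)$ relative to some basis $\mathcal B$, and that the Lie isomorphism $M_{\mathcal B}\colon\gl(V)\to\gl(m)$ carries $L(f)$ onto $L(D)$ and $M(f)$ onto $M(D)=\{A\in\gl(m)\mid d_iA_{ij}-d_jA_{ji}=0\}$; transporting the $L(f)$-action through $M_{\mathcal B}$ makes this an isomorphism of $L(f)$-modules, which is (4). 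Finally, if $F=F^2$ then each $d_i$ is a square, so after rescaling $\mathcal B$ we may take $D=I_m$, and $M(I_m)$ is precisely the space of symmetric $m\times m$ matrices, giving the last assertion.

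There is no substantial obstacle here: every ingredient --- the isomorphism $\Gamma$ and its behaviour on $S^2(V)$, the decomposition $\gl(V)=L(f)\perp M(f)$, and the diagonalization of a symmetric non-alternating form --- has been set up in \S\ref{subcla}, \S\ref{secbas} and \S\ref{secid}. The only thing to keep in mind is bookkeeping: one must note explicitly that $M_{\mathcal B}$ is not merely a linear bijection but an $L(f)$-module isomorphism once the action on $M(D)$ is transported along it, and that the hypothesis $\ell\neq 2$ is exactly what supplies both the splitting (via Lemmas~\ref{lem15}--\ref{lem17}) and the availability of a diagonal Gram matrix.
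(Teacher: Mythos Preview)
Your proposal is correct and follows essentially the same route as the paper: the paper simply states ``In a similar manner, we derive the following result,'' referring back to the proof of Theorem~\ref{matver}, which invokes Proposition~\ref{gam} for the $S^2(V)$ identification, Corollary~\ref{cor12} for $M(f)=L(f)^\perp$, and \S\ref{subcla} for the matrix description. Your write-up spells out these same steps (with a bit more care about the dimension count and the role of $M_{\mathcal B}$), so it matches the intended argument.
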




\begin{lemma}\label{contra} The contraction map $\Omega:V\otimes V\to F$, given by $v\otimes w\mapsto f(v,w)$, is an $L(f)$-homomorphism,
which is surjective if and only if $f\neq 0$.
\end{lemma}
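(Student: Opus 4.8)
The plan is to verify the two assertions of Lemma~\ref{contra} directly from the module structure on $V\otimes V$ and the definition of $\Omega$.

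First I would check that $\Omega$ is an $L(f)$-homomorphism. Recall that the action on $V\otimes V$ is $x\cdot(v\otimes w)=xv\otimes w+v\otimes xw$, and that $F$ is the trivial $L(f)$-module, so $x\cdot\lambda=0$ for all $\lambda\in F$. Hence I must show $\Omega(x\cdot(v\otimes w))=0$ for all $x\in L(f)$ and $v,w\in V$. Computing,
\[
\Omega(x\cdot(v\otimes w))=\Omega(xv\otimes w+v\otimes xw)=f(xv,w)+f(v,xw),
\]
and this vanishes precisely because $x\in L(f)$ means $f(xv,w)=-f(v,xw)$. Since the simple tensors span $V\otimes V$ and $\Omega$ is linear, this shows $\Omega$ commutes with the $L(f)$-action, i.e.\ $\Omega$ is an $L(f)$-homomorphism onto the trivial module $F$.

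Next I would treat the surjectivity claim. If $f=0$ then $\Omega$ is the zero map, and since $\dim F=1>0$ it is not surjective. Conversely, if $f\neq 0$, pick $v,w\in V$ with $f(v,w)=c\neq 0$; then for any $\lambda\in F$ we have $\Omega\big((\lambda/c)\,v\otimes w\big)=\lambda$, so $\Omega$ is surjective. (One could equally invoke that $F$ is one-dimensional, so the image of a nonzero linear functional is all of $F$.)

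There is essentially no obstacle here: both parts are immediate from the definitions, the only point worth stating carefully being that $F$ carries the trivial module structure so that the homomorphism condition reduces to $f(xv,w)+f(v,xw)=0$, which is exactly the defining property of $L(f)$. No further results from the excerpt are needed beyond the definition of $L(f)$ and of the tensor-product module in \S\ref{secid}.
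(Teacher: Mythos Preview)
Your proof is correct and is precisely the ``easy calculation'' the paper alludes to: you verify directly that $\Omega(x\cdot(v\otimes w))=f(xv,w)+f(v,xw)=0$ by the defining property of $L(f)$, and handle surjectivity by the trivial case distinction on whether $f$ vanishes identically.
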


\begin{proof} This is an easy calculation.
\end{proof}

\begin{lemma}\label{contra2} Suppose that $f$ is non-degenerate. Then $\Omega$ can be identified with the trace map $\tr:\gl(V)\to F$,
in the sense that $\Omega(v\otimes w)=\tr(\Gamma(v\otimes w))$ for all $v,w\in V$.
\end{lemma}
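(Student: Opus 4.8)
The plan is to verify the stated identity $\Omega(v\otimes w)=\tr(\Gamma(v\otimes w))$ directly on the spanning elements $v\otimes w$ of $V\otimes V$, since both $\Omega$ and $\tr\circ\,\Gamma$ are linear maps $V\otimes V\to F$ and thus agree everywhere as soon as they agree on the decomposable tensors. So fix $v,w\in V$. By definition $\Omega(v\otimes w)=f(v,w)$, so the whole content is to show $\tr(\Gamma(v\otimes w))=f(v,w)$.

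To compute the trace of the endomorphism $\Gamma(v\otimes w)$, recall from the paragraph preceding Proposition~\ref{gam} that this endomorphism acts by $u\mapsto f(v,u)w$; in other words it is the rank-one map $w\otimes f(v,-)$, the composite of the linear functional $f(v,-)\in V^*$ with the inclusion of the line $Fw$. First I would fix a basis $\{v_1,\dots,v_m\}$ of $V$ and write $x=\Gamma(v\otimes w)$, so that $x v_i=f(v,v_i)w$. Expanding $w=\sum_j c_j v_j$ in the chosen basis, the matrix of $x$ has $(j,i)$-entry $f(v,v_i)c_j$, and hence $\tr(x)=\sum_i f(v,v_i)c_i = f\bigl(v,\sum_i c_i v_i\bigr)=f(v,w)$ by bilinearity of $f$ in the second slot. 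This is exactly $\Omega(v\otimes w)$, completing the argument.

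There is really no obstacle here: the only mild care needed is to keep straight which index of the matrix of $\Gamma(v\otimes w)$ is summed in the trace and to use the bilinearity of $f$ to pull the coefficients $c_i$ back inside the form. One could phrase the same computation coordinate-free by noting that under the canonical isomorphism $V^*\otimes V\cong\End(V)$ the trace corresponds to the evaluation pairing $V^*\otimes V\to F$, $\delta\otimes u\mapsto\delta(u)$, so that $\tr(\Gamma(v\otimes w))=\tr\bigl(\Gamma_2(f(v,-)\otimes w)\bigr)=f(v,-)(w)=f(v,w)$, using the definition $\Gamma=\Gamma_2\circ(\Gamma_1\otimes 1_V)$ with $\Gamma_1(v)=f(v,-)$ from the same paragraph. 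Either presentation is a two-line verification; I would likely include the coordinate-free version for brevity, remarking that both maps are linear so it suffices to check the identity on decomposable tensors $v\otimes w$.
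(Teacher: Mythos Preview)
Your proof is correct and follows essentially the same approach as the paper: a direct computation of the trace of the rank-one endomorphism $\Gamma(v\otimes w)$. The paper's version picks a basis $v_1,\dots,v_m$ of $V$ together with its $f$-dual basis $w_1,\dots,w_m$ and checks the identity on the spanning tensors $v_i\otimes w_j$, while you compute for arbitrary $v,w$ in a single basis (and also supply the coordinate-free evaluation-pairing argument); both are the same two-line verification.
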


\begin{proof} Let $v_1,\dots,v_m$ be a basis of $V$ and let $w_1,\dots,w_m$ be the dual basis of $V$ relative to $f$, i.e.,
such that $f(v_i,w_j)=\delta_{ij}$. Then $\Gamma(v_i\otimes
w_j)(w_k)=\delta_{ik}w_j$, so
$$
\tr(\Gamma(v_i\otimes
w_j))=\delta_{ij}=f(v_i,w_j)=\Omega(v_i\otimes w_j),
$$
which implies the result for all $v,w\in V$.
\end{proof}

\begin{cor}\label{loro} Suppose that $\ell\neq 2$ and $f$ is non-degenerate.

(a) If $f$ is skew-symmetric then $\Lambda^2(V)$ is not contained in the the kernel of $\Omega$, so
$\Lambda^2(V)\cap \ker\Omega$ is an $L(f)$-submodule of $\Lambda^2(V)$ of codimension 1, which
corresponds to $M(f)\cap\sl(V)$ under $\Gamma$, and in matrix form to all matrices (\ref{qa}) with $a\in\sl(n)$.

(b) If $f$ is symmetric then $S^2(V)$ is not contained in the the
kernel of $\Omega$, so $S^2(V)\cap \ker\Omega$ is an
$L(f)$-submodule of $S^2(V)$ of codimension 1, which corresponds
to $M(f)\cap\sl(V)$ under $\Gamma$, and in matrix form to all
$A\in\gl(m)$ as described in Theorem \ref{matver2} satisfying
$\tr(A)=0$.

(c) If $f$ is symmetric or skew-symmetric then $\s$ is contained in $M(f)\cap\sl(V)$ if and only if $\ell|m$.
\end{cor}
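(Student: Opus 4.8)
The plan is to transfer the whole statement to the matrix picture via the $L(f)$-isomorphism $\Gamma:V\otimes V\to\gl(V)$ of \S\ref{secid}, using two facts already established: Proposition~\ref{gam}, which says that $\Gamma$ carries $\Lambda^2(V)$ onto $M(f)$ when $f$ is skew-symmetric and $S^2(V)$ onto $M(f)$ when $f$ is symmetric, and Lemma~\ref{contra2}, which identifies $\Omega$ with $\tr\circ\Gamma$. Combining these, the restriction of $\Omega$ to $\Lambda^2(V)$ (resp.\ to $S^2(V)$) becomes, after applying $\Gamma$, the trace functional restricted to $M(f)$; and since $\Gamma$ is bijective, $\Lambda^2(V)\cap\ker\Omega$ (resp.\ $S^2(V)\cap\ker\Omega$) is carried onto $\{y\in M(f):\tr(y)=0\}=M(f)\cap\sl(V)$. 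Moreover $\Lambda^2(V)$ (resp.\ $S^2(V)$) is an $L(f)$-submodule of $V\otimes V$ and $\ker\Omega$ is one by Lemma~\ref{contra}, so the intersection is an $L(f)$-submodule automatically. Thus everything reduces to analysing $\tr|_{M(f)}$.

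For parts (a) and (b) I would first check that $\tr|_{M(f)}$ is a nonzero, hence surjective, linear functional on $M(f)$. In the skew-symmetric case this is immediate from the matrix description of $M(f)$ in \S\ref{subcla} (equivalently Theorem~\ref{matver}): a matrix of shape (\ref{qa}) has trace $\tr(a)+\tr(a')=2\tr(a)$, which, as $\ell\neq 2$, runs over all of $F$ as $a$ varies. In particular $\Omega$ does not vanish identically on $\Lambda^2(V)$, which is the first assertion of (a); consequently $\ker(\tr|_{M(f)})=M(f)\cap\sl(V)$ is a hyperplane of $M(f)$ and $\Lambda^2(V)\cap\ker\Omega$ has codimension~$1$ in $\Lambda^2(V)$. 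Reading off the matrix form, $M(f)\cap\sl(V)$ consists of the matrices (\ref{qa}) with $2\tr(a)=0$, i.e.\ with $a\in\sl(n)$. Part (b) is the same argument with Proposition~\ref{gam}(b) and Theorem~\ref{matver2} in place of their skew-symmetric analogues: the matrices $A$ with $d_iA_{ij}=d_jA_{ji}$ include all diagonal matrices, hence some of nonzero trace, so $\Omega$ is again nonzero on $S^2(V)$ and $S^2(V)\cap\ker\Omega$ corresponds to the traceless members of $M(f)$.

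For part (c): every scalar operator is self-adjoint, so $\s\subseteq M(f)$ regardless of the nature of $f$; therefore $\s\subseteq M(f)\cap\sl(V)$ if and only if $\s\subseteq\sl(V)$, which by the remark in \S\ref{se-pre} holds precisely when $\ell\mid m$.

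I do not expect a genuine obstacle: the argument is entirely bookkeeping on top of Proposition~\ref{gam} and Lemmas~\ref{contra}--\ref{contra2}. The only point that warrants a line of care is the claim that $\Lambda^2(V)$ (resp.\ $S^2(V)$) is not contained in $\ker\Omega$, equivalently that $M(f)\not\subseteq\sl(V)$; rather than invoking $\ell\nmid m$ as in Corollary~\ref{cor13}, I would settle this uniformly, for all $m$, by exhibiting the explicit element of $M(f)$ of nonzero trace produced by the matrix descriptions above.
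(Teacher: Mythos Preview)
Your proposal is correct and follows exactly the approach the paper intends: the corollary is stated without proof, as an immediate consequence of Proposition~\ref{gam}, Lemmas~\ref{contra}--\ref{contra2}, and the matrix descriptions in Theorems~\ref{matver}--\ref{matver2}, which is precisely the bookkeeping you carry out. Your care in verifying $M(f)\not\subseteq\sl(V)$ directly from the matrix form (rather than via Corollary~\ref{cor13}) is appropriate, since the latter assumes $\ell\nmid m$.
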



\begin{lemma}\label{quot} The map $V\otimes V\to \Lambda^2(V)$ given by
$$
v\otimes w\mapsto  v\otimes w-w\otimes v
$$
is an epimorphism of $\gl(V)$-modules with kernel $S^2(V)$.
\end{lemma}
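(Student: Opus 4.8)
The plan is to produce the map through the universal property of the tensor product, verify $\gl(V)$-equivariance on decomposable tensors, and then pin down the kernel by a dimension count. There is no genuine obstacle here; the only point demanding a little care is that the relevant dimension formulas hold over an arbitrary field, in particular in characteristic $2$, where $\Lambda^2(V)\subseteq S^2(V)$.

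First I would note that $(v,w)\mapsto v\otimes w-w\otimes v$ is a bilinear map $V\times V\to V\otimes V$, hence factors through a well-defined linear map $\Phi:V\otimes V\to V\otimes V$ with $\Phi(v\otimes w)=v\otimes w-w\otimes v$. By the very definition of $\Lambda^2(V)$ as the subspace of $V\otimes V$ spanned by the elements $v\otimes w-w\otimes v$, the image of $\Phi$ is precisely $\Lambda^2(V)$, so $\Phi$, viewed as a map into $\Lambda^2(V)$, is surjective. For the equivariance, recall $x\cdot(v\otimes w)=xv\otimes w+v\otimes xw$ for $x\in\gl(V)$, and compute
$$\Phi(x\cdot(v\otimes w))=xv\otimes w-w\otimes xv+v\otimes xw-xw\otimes v=x\cdot(v\otimes w-w\otimes v)=x\cdot\Phi(v\otimes w).$$
Since decomposable tensors span $V\otimes V$, this shows $\Phi$ is a homomorphism of $\gl(V)$-modules.

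It remains to identify $\ker\Phi$ with $S^2(V)$. From the spanning sets of $S^2(V)$ we get at once $\Phi(v\otimes v)=0$ and $\Phi(v\otimes w+w\otimes v)=0$, so $S^2(V)\subseteq\ker\Phi$. For the reverse inclusion I would invoke rank--nullity together with the surjectivity of $\Phi$ onto $\Lambda^2(V)$: this yields
$$\dim\ker\Phi=\dim(V\otimes V)-\dim\Lambda^2(V)=m^2-{{m}\choose{2}}={{m+1}\choose{2}}=\dim S^2(V),$$
the dimension formulas being exactly those recorded in the proof of Proposition \ref{gam} and valid over any field. Hence $\ker\Phi=S^2(V)$, completing the proof. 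Alternatively, one can dispense with the dimension count: writing $z=\sum_{i,j}a_{ij}\,e_i\otimes e_j$ in a basis $e_1,\dots,e_m$ of $V$, one has $\Phi(z)=\sum_{i,j}(a_{ij}-a_{ji})\,e_i\otimes e_j$, so $\Phi(z)=0$ forces $a_{ij}=a_{ji}$ for all $i,j$, i.e. $z\in S^2(V)$; this argument makes no use of the characteristic and is the step I would present if one prefers to avoid citing the dimension formulas.
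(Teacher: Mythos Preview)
Your argument is correct; the paper's own proof is simply ``This is clear,'' so you have supplied exactly the routine verification the authors chose to omit. Both the dimension-count and the explicit coordinate computation you give are valid over any field, and either one suffices.
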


\begin{proof} This is clear.
\end{proof}

\begin{theorem}\label{lami} Suppose that $\ell=2$ and $f$ is non-degenerate and
symmetric. Then

(1) $\Gamma$ maps the $L(f)$-submodule $\Lambda^2(V)$ of $S^2(V)$
onto $L(f)^{(1)}=[L(f),L(f)]$, an ideal of $L(f)$ of
codimension~$m$.

(2) $\gl(V)/L(f)\cong L(f)^{(1)}$ as $L(f)$-modules.

(3) Suppose $f$ is alternating. Then $m=2n$ and there is a basis
${\mathcal B}$ of $V$ relative to which $f$ has Gram matrix $J$,
as defined in (\ref{defjo}).

Moreover, $L(J)$ (resp. $L(J)^{(1)}$) consists of all matrices
\begin{equation}
\label{deri}
\left(%
\begin{array}{cc}
  A & B \\
  C & A' \\
\end{array}%
\right)
\end{equation}
such that $A,B,C\in\gl(n)$ and $B,C$ are symmetric (resp.
alternating).

Furthermore, $L(J)^{(2)}$ consists of all matrices (\ref{deri})
such that $B,C$ are alternating and $\tr(A)=0$. In particular,
$L(J)^{(2)}$ has codimension 1 in $L(J)^{(1)}$.

(4) Suppose $f$ is alternating and let ${\mathcal B}$ and $J$ be
as above. Let $\Delta:\Lambda^2(V)\to F$ be the $L(f)$-epimorphism
given by
$$
v\wedge w=v\otimes w+w\otimes v\mapsto f(v,w).
$$
Then $L(J)^{(2)}$ corresponds to $\ker\Delta$ under the
$L(f)$-isomorphism $M_{\mathcal B}\circ \Gamma$.

(5) Suppose $f$ is non-alternating. Then there is a basis
${\mathcal B}$ of $V$ relative to which $f$ has Gram matrix
$$
D=\mathrm{diag}(d_1,\dots,d_m),\quad 0\neq d_i\in F.
$$
Moreover, $L(D)$ (resp. $L(D)^{(1)}$) consists of all $A\in\gl(m)$
such that $d_iA_{ij}=d_j A_{ji}$ (resp. $d_iA_{ij}=d_j A_{ji}$ and
$A_{ii}=0$). Furthermore, $L(f)^{(1)}$ is perfect provided~$m\neq
2$. In particular, if $D=I_m$, which occurs when $F=F^2$, then
$L(D)$ (resp. $L(D)^{(1)}$) consists of all symmetric (resp.
alternating) matrices in $gl(m)$.

(6) The space $\s$ is contained in $L(f)^{(1)}$ if and only if $f$
is alternating, in which case $\s$ is contained in $L(f)^{(2)}$ if
and only if $4|m$.
\end{theorem}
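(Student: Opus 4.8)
The engine of the proof is the $L(f)$-module isomorphism $\Gamma\colon V\otimes V\to\gl(V)$ built in \S\ref{secid}, which by Proposition \ref{gam}(c) carries $S^2(V)$ onto $L(f)=M(f)$. The one fact that makes characteristic $2$ special here is that the quotient module $S^2(V)/\Lambda^2(V)$ is \emph{trivial}: for any $x\in\gl(V)$ and $v\in V$ the element $x\cdot(v\otimes v)=xv\otimes v+v\otimes xv$ already lies in $\Lambda^2(V)$, and $\dim S^2(V)-\dim\Lambda^2(V)=\binom{m+1}{2}-\binom{m}{2}=m$. Transporting through $\Gamma$, the $L(f)$-submodule $\Gamma(\Lambda^2(V))$ of $L(f)$ has a trivial quotient of dimension $m$; triviality of that quotient module means exactly that $[y,z]\in\Gamma(\Lambda^2(V))$ for all $y,z\in L(f)$, so $L(f)^{(1)}\subseteq\Gamma(\Lambda^2(V))$. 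For the opposite inclusion, observe that $\Gamma(v\otimes v)\in L(f)$, hence for $z\in L(f)$ the bracket $[z,\Gamma(v\otimes v)]=\Gamma\bigl(z\cdot(v\otimes v)\bigr)=\Gamma\bigl((zv)\wedge v\bigr)$ lies in $L(f)^{(1)}$; since for any $i\ne j$ the matrix description of $L(f)$ in \S\ref{subcla} provides a $z\in L(f)$ sending $v_i$ to a nonzero multiple of $v_j$, we get $\Gamma(v_i\wedge v_j)\in L(f)^{(1)}$ for all $i,j$, and these span $\Gamma(\Lambda^2(V))$. Thus $\Gamma(\Lambda^2(V))=L(f)^{(1)}$, which proves (1) (the ideal and codimension assertions being then immediate). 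Part (2) follows at once: $\Gamma$ induces $\gl(V)/L(f)\cong(V\otimes V)/S^2(V)$, which is $\cong\Lambda^2(V)$ by Lemma \ref{quot}, and $\Lambda^2(V)\cong\Gamma(\Lambda^2(V))=L(f)^{(1)}$ by (1).

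For the matrix statements (3) and (5), I would work in the normal forms of \S\ref{subcla}. In the non-alternating case $f$ has diagonal Gram matrix $D=\mathrm{diag}(d_1,\dots,d_m)$, $d_i\ne 0$, with $L(D)=\{A:d_iA_{ij}=d_jA_{ji}\}$; a one-line evaluation of $\Gamma$ on the basis gives $\Gamma(v_i\wedge v_j)=d_ie_{ji}+d_je_{ij}$, so by (1) the derived algebra $L(D)^{(1)}=\Gamma(\Lambda^2(V))$ is precisely the span of these, i.e.\ all $A\in L(D)$ with $A_{ii}=0$; perfectness of $L(D)^{(1)}$ when $m\ge 3$ follows from the identity $[\,d_ie_{ji}+d_je_{ij}\,,\,d_je_{kj}+d_ke_{jk}\,]=d_j\,(d_ie_{ki}+d_ke_{ik})$ for distinct $i,j,k$ (and $d_j\ne 0$), and the case $D=I_m$, which arises when $F=F^2$, is read off directly. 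In the alternating case $f$ has Gram matrix $J$ and $L(J)=\{\left(\begin{smallmatrix}A&B\\C&A'\end{smallmatrix}\right):B,C\text{ symmetric}\}$; computing $\bigl[\left(\begin{smallmatrix}A&0\\0&A'\end{smallmatrix}\right),\left(\begin{smallmatrix}0&B\\0&0\end{smallmatrix}\right)\bigr]=\left(\begin{smallmatrix}0&AB-BA'\\0&0\end{smallmatrix}\right)$ and $\bigl[\left(\begin{smallmatrix}0&B\\0&0\end{smallmatrix}\right),\left(\begin{smallmatrix}0&0\\C&0\end{smallmatrix}\right)\bigr]=\left(\begin{smallmatrix}BC&0\\0&(BC)'\end{smallmatrix}\right)$ (both arguments in $L(J)$ when $B,C$ are symmetric), together with the fact that $AB-BA'$ has zero diagonal whenever $B$ is symmetric, shows $L(J)^{(1)}$ contains all $\left(\begin{smallmatrix}A&B\\C&A'\end{smallmatrix}\right)$ with $B,C$ alternating; since that space has dimension $2n^2-n=\dim\Lambda^2(V)$ and $L(J)^{(1)}\subseteq\Gamma(\Lambda^2(V))$ by (1), equality holds. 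Repeating the computation inside $L(J)^{(1)}$ and using that the product of two alternating matrices has trace $0$ in characteristic $2$ shows the $(1,1)$-block of every bracket of elements of $L(J)^{(1)}$ is traceless, whence $L(J)^{(2)}$ is carved out of $L(J)^{(1)}$ by the additional condition $\tr(A)=0$, of codimension $1$.

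For (4): $\Delta$ is an $L(f)$-epimorphism onto the trivial module $F$ because $x\in L(f)$ forces $f(xv,w)+f(v,xw)=0$, so $\ker\Delta$ has codimension $1$ in $\Lambda^2(V)$; a short computation on the symplectic basis (e.g.\ $\Gamma(v_i\wedge v_{n+i})$ has $(1,1)$-block $e_{ii}$, while $\Gamma(v_i\wedge v_j)$ with $i,j\le n$ has $(1,1)$-block $0$) shows that under the $L(f)$-isomorphism $M_{\mathcal B}\circ\Gamma\colon\Lambda^2(V)\to L(J)^{(1)}$ the functional $\Delta$ corresponds to $\left(\begin{smallmatrix}A&B\\C&A'\end{smallmatrix}\right)\mapsto\tr(A)$, so $\ker\Delta$ maps onto $\{\tr(A)=0\}\cap L(J)^{(1)}=L(J)^{(2)}$ by (3). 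Finally (6) is immediate from the matrix pictures: a scalar matrix $cI$ has diagonal $(c,\dots,c)$, which vanishes only for $c=0$, so $\s\not\subseteq L(D)^{(1)}$ in the non-alternating case, while in the alternating case $I_{2n}=\left(\begin{smallmatrix}I_n&0\\0&I_n\end{smallmatrix}\right)$ has alternating off-diagonal blocks $0$, hence lies in $L(J)^{(1)}$; and $I_{2n}\in L(J)^{(2)}$ if and only if $\tr(I_n)=n$ vanishes in $F$, i.e.\ if and only if $2\mid n$, i.e.\ if and only if $4\mid m$. The main obstacle is Part (1) together with the explicit derived-series computations feeding parts (3) and (5); once $\Gamma(\Lambda^2(V))=L(f)^{(1)}$ and the matrix forms of $L(f)^{(1)}$ and $L(f)^{(2)}$ are in hand, parts (2), (4) and (6) are short formalities.
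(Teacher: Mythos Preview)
Your argument is correct and largely parallels the paper's, with one genuine improvement and one step that needs completing.

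The improvement is in part (1). The paper only proves $L(f)^{(1)}\subseteq\Gamma(\Lambda^2(V))$ conceptually (via $L(f)\cdot S^2(V)\subseteq\Lambda^2(V)$, essentially your triviality-of-the-quotient observation) and then obtains equality by first carrying out the explicit matrix computations of parts (3) and (5) to see that $L(f)^{(1)}$ has codimension~$m$. Your direct proof of the reverse inclusion---realising each $\Gamma(v_i\wedge v_j)$ as the bracket $[z,\Gamma(v_i\otimes v_i)]$ for a $z\in L(f)$ with $zv_i\in Fv_j$---is cleaner and makes (1) independent of the matrix work. You do need that such a $z$ exists with $zv_i$ a \emph{pure} multiple of $v_j$ (else the wedges mix); this is true in both normal forms but is worth one sentence.

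The incomplete step is the description of $L(J)^{(2)}$. Your trace argument shows only $L(J)^{(2)}\subseteq N':=\{\tr A=0\}\cap L(J)^{(1)}$; ``whence $L(J)^{(2)}$ is carved out by $\tr A=0$'' needs the reverse inclusion, and literally ``repeating the computation'' does not give it. Inside $L(J)^{(1)}$ your type-2 brackets now have $B,C$ alternating, and the span of $\{BC:B,C\text{ alternating}\}$ is \emph{not} $\sl(n)$ in general (for $n=2$ it is only $F\cdot I_2$). The remedy, which the paper uses via its relation $[\gl(n),\gl(n)]=\sl(n)$, is to include the block-diagonal brackets
\[
\Bigl[\begin{pmatrix}A_1&0\\0&A_1'\end{pmatrix},\begin{pmatrix}A_2&0\\0&A_2'\end{pmatrix}\Bigr]=\begin{pmatrix}[A_1,A_2]&0\\0&[A_1,A_2]'\end{pmatrix},
\]
with $A_1,A_2\in\gl(n)$; these lie in $L(J)^{(1)}$ and supply all of $\sl(n)$ in the diagonal block. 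Together with your type-1 brackets (now with $B$ alternating, e.g.\ $A=e_{ki}$, $B=e_{ij}+e_{ji}$ giving $e_{kj}+e_{jk}$) this yields $N'\subseteq L(J)^{(2)}$. Everything else---parts (2), (4), (5), (6)---matches the paper's argument.
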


\begin{proof} By Proposition \ref{gam}, the $L(f)$-isomorphism
$\Gamma:V\otimes V\to\gl(V)$ sends $S^2(V)$ onto $L(f)$.
Therefore,
$$
\Gamma(L(f)\cdot S^2(V))=L(f)\cdot L(f)=L(f)^{(1)}.
$$
One the other hand, we easily verify that
$$
L(f)\cdot S^2(V)\subseteq \Lambda^2(V),
$$
which yields
\begin{equation}
\label{copn} \Gamma(\Lambda^2(V))\supseteq L(f)^{(1)}.
\end{equation}
Since $\Lambda^2(V)$ has codimension $m$ in $S^2(V)$, in order to
show that equality prevails in (\ref{copn}) it suffices to show
that $L(f)^{(1)}$ has codimension $m$ in $L(f)$.

Suppose first that $f$ is non-alternating. By \cite{K}, Theorem
20, there is a basis ${\mathcal B}$ of $V$ relative to which $f$
has Gram matrix $D=\mathrm{diag}(d_1,\dots,d_m)$, where $0\neq
d_i\in F$. We easily see that $L(D)$ consists of all $A\in\gl(m)$
such that $d_iA_{ij}=d_j A_{ji}$, with the following
multiplication table:
$$
[e_{ii},d_je_{ij}+d_i e_{ji}]=d_je_{ij}+d_i e_{ji},
$$
$$
[d_je_{ij}+d_i e_{ji},d_ke_{ik}+d_i e_{ki}]=d_i(d_k e_{jk}+d_j
e_{kj}).
$$
This proves (5) and completes the verification of (\ref{copn})
when $f$ is non-alternating.

Suppose next that $f$ is alternating. By \cite{K}, Theorem 19,
$m=2n$ and there is a basis ${\mathcal B}$ of $V$ relative to
which $f$ has Gram matrix $J$, as defined in
(\ref{defjo}). As seen in \S\ref{subtra}, $L(J)$ consists of
the stated matrices. Moreover, we have
\begin{equation}
\label{req1} [\gl(n),\gl(n)]=\sl(n),
\end{equation}
\begin{equation}
\label{req2} \left[\left(%
\begin{array}{cc}
  0 & B \\
  0 & 0 \\
\end{array}%
\right),\left(%
\begin{array}{cc}
  0 & 0 \\
  C & 0 \\
\end{array}%
\right)\right]=\left(%
\begin{array}{cc}
  BC & 0 \\
  0 & CB \\
\end{array}%
\right),
\end{equation}
\begin{equation}
\label{req3} \left[\left(%
\begin{array}{cc}
  A & 0 \\
  0 & A' \\
\end{array}%
\right),\left(%
\begin{array}{cc}
  0 & B \\
  0 & 0 \\
\end{array}%
\right)\right]=\left(%
\begin{array}{cc}
  0 & AB+(AB)' \\
  0 & 0 \\
\end{array}%
\right),
\end{equation}
\begin{equation}
\label{req4} \left[\left(%
\begin{array}{cc}
  A & 0 \\
  0 & A' \\
\end{array}%
\right),\left(%
\begin{array}{cc}
  0 & 0 \\
  C & 0 \\
\end{array}%
\right)\right]=\left(%
\begin{array}{cc}
  0 & 0 \\
  CA+(CA)' & 0 \\
\end{array}%
\right).
\end{equation}

Combining (\ref{req1}) and (\ref{req2}) with $B=I_n$ and
$C=\mathrm{diag}(1,0,\dots,0)$ we deduce
$$
\left(%
\begin{array}{cc}
  A & 0 \\
  0 & A' \\
\end{array}%
\right)\in L(J)^{(1)},\quad A\in\gl(n).
$$
As seen in \S\ref{subtra}, the map $\gl(n)\to\mathrm{Alt}(n)$, given by $A\mapsto A+A'$, is surjective.
This, together with
(\ref{req3}) and (\ref{req4}) applied to the special case
$B=I_n=C$, imply that for all alternating matrices $B,C\in\gl(n)$,
we have
$$
\left(%
\begin{array}{cc}
  0 & B \\
  0 & 0 \\
\end{array}%
\right),\left(%
\begin{array}{cc}
  0 & 0 \\
  C & 0 \\
\end{array}%
\right)\in L(J)^{(1)}.
$$
It now follows from (\ref{req1})-(\ref{req4}) that $L(J)^{(1)}$
consists of all matrices (\ref{deri}) such that $B,C$ are
alternating. This proves the first two statements of (3) and
completes the proof of (1).

Using (\ref{req3}) and (\ref{req4}) with $A=e_{ii}$ and
$B=e_{ij}+e_{ji}=C$, where $i\neq j$, we see that for all
alternating matrices $B,C\in\gl(n)$, we have
$$
\left(%
\begin{array}{cc}
  0 & B \\
  0 & 0 \\
\end{array}%
\right),\left(%
\begin{array}{cc}
  0 & 0 \\
  C & 0 \\
\end{array}%
\right)\in L(J)^{(2)}.
$$
Let $B,C\in\gl(n)$ be alternating. We infer from (\ref{skew}) that
$$\tr(BC)=0.$$
It now follows from (\ref{req1})-(\ref{req4}) that $L(J)^{(2)}$
consists of all matrices (\ref{deri}) such that $B,C$ are
alternating and $\tr(A)=0$, which completes the proof of (3).

Suppose still that $f$ is alternating. Then $f$ induces a linear
map $\Lambda^2(V)\to F$, namely $\Delta$. It is clear that
$\Delta$ is an $L(f)$-epimorphism. Let
$${\mathcal B}=\{v_1,\dots,v_n,w_1,\dots,w_n\}$$ be the given
basis $V$, relative to which $f$ has Gram matrix $J$. Then
$M_{\mathcal B}\circ\Gamma$ satisfies:
$$
v_i\wedge v_j\mapsto \left(%
\begin{array}{cc}
  0 & e_{ij}+e_{ji} \\
  0 & 0 \\
\end{array}%
\right), w_i\wedge w_j\mapsto \left(%
\begin{array}{cc}
  0 & 0 \\
  e_{ij}+e_{ji} & 0 \\
\end{array}%
\right), v_i\wedge w_j\mapsto \left(%
\begin{array}{cc}
  e_{ij} & 0 \\
  0 & e_{ji} \\
\end{array}%
\right).
$$
This, (1) and (3) show that $M_{\mathcal B}\circ\Gamma$ sends
$\ker\Delta$ onto $L(J)^{(2)}$, which is (4).

Now (3) and (5) yield the first part of (6), while (3) gives the
second. Finally, we may apply (1), Proposition \ref{gam} and Lemma
\ref{quot} to derive (2).
\end{proof}

\begin{note}\label{errorb}{\rm Suppose that $\ell=2$ and $f$ is non-degenerate and alternating.
There are two mistakes in \cite{B}, Chapter I, \S 6, Exercise
25(b). In their notation, it is claimed that $\mathfrak
b=L(f)^{(1)}$, when in fact $\mathfrak a=L(f)^{(1)}$ and
$\mathfrak b=L(f)^{(2)}$. This holds for any even $m$, while their
claim was made for $m\geq 6$. It is also claimed that if $m\geq 6$
then $\mathfrak {b/c}$ is simple, when in fact $\mathfrak c=\s$ is
only included in $\mathfrak b=L(f)^{(2)}$ when $4|m$. }
\end{note}

\section{The $L(f)$-module $L(f)^{(1)}$ when $f$ is symmetric and non-alternating}
\label{secSoSimple}

We assume throughout this section that $m\ge 3$ and let
$L=L(I_m)=\so(m)$.

On the one hand, if $\ell\ne 2$ then $L$ consists of all skew-symmetric matrices and $L^{(1)}=L$. On the other hand, when $\ell=2$, $L$ is the set of all symmetric matrices and $L^{(1)}$ consists of all alternating
matrices. In any characteristic, the derived algebra $L^{(1)}$ is
spanned by the matrices $E_{ij}$ with $i<j$, where $E_{ij}$ is
defined as $e_{ij}-e_{ji}$ for any $i,j$. The following
multiplication rules can be easily verified.
\begin{itemize}
\item $[E_{ij},E_{jk}]=E_{ik}$ for $i\ne j$ and $j\ne k$; \item
$[E_{ij},E_{rs}]=0$ if $i,j,r,s$ are all different to each other.
\end{itemize}

\begin{theorem}
\label{sosimple1} If $m=3$ or $m\ge 5$ then $L^{(1)}$ is a simple
Lie algebra. If $m\ge 3$ and $\ell=2$ then $L^{(1)}$ is an
irreducible $L$-module.
\end{theorem}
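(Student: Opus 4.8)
The statement has two parts, and I would handle them separately but with a common engine: exploiting the explicit basis $\{E_{ij} : i<j\}$ of $L^{(1)}$ together with the two multiplication rules displayed just before the theorem. The plan is to show that any nonzero subspace $I$ of $L^{(1)}$ that is invariant under the relevant action (under $\ad L^{(1)}$ for the simplicity claim, under $\ad L$ for the module claim) must be all of $L^{(1)}$, and then, for the simplicity claim, separately dispose of the dimension condition $\dim L^{(1)}>1$, which fails only when $m=3$ and $\ell=2$ is not an issue since $\dim\Lambda^2(F^3)=3$ — so in fact $\dim L^{(1)}\ge 3$ whenever $m\ge 3$, and the only genuine content is the invariance argument. (I should double-check the degenerate low cases: $m=3$ gives $\so(3)$ or its char-2 analogue of dimension $3$, and $m=4$ is explicitly excluded.)

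First I would treat the $\ell=2$, $L$-module statement, since it is the stronger hypothesis and the bracket rules are cleanest there. Take a nonzero $L$-submodule $I\subseteq L^{(1)}$ and pick $0\ne x\in I$, write $x=\sum_{i<j}c_{ij}E_{ij}$ with some $c_{pq}\ne 0$. The strategy is a "separation" argument: bracket $x$ with suitable elements of $L$ (which in char $2$ is all symmetric matrices, in particular the diagonal idempotents $e_{rr}$ and the symmetric $e_{rs}+e_{sr}$) to isolate a single $E_{pq}$. The key computation is that for diagonal $h\in L$ one gets $[h,E_{ij}]=(h_{ii}-h_{jj})E_{ij}$ (note: in char $2$, $h_{ii}-h_{jj}=h_{ii}+h_{jj}$), which lets me pick off eigenspaces; when $m\ge 3$ I can choose $h$ with $h_{pp}\ne h_{qq}$ but enough coincidences among the other coordinates to collapse $x$ toward a multiple of $E_{pq}$ plus terms $E_{ij}$ with $\{i,j\}$ "parallel" to $\{p,q\}$; then a second bracket with some $e_{rs}+e_{sr}$, using the off-diagonal rules derived from the two displayed identities, finishes the isolation. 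Once a single $E_{pq}\in I$, the rule $[E_{ij},E_{jk}]=E_{ik}$ propagates it to every $E_{ik}$ sharing an index, and since $m\ge 3$ the "sharing an index" graph on pairs is connected, so $I=L^{(1)}$.

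For the simplicity claim in arbitrary characteristic, the argument is formally the same but now $I$ is only assumed $\ad L^{(1)}$-invariant, so I only have brackets with the $E_{ij}$ available, not with all of $L$. Here I would run the standard three-step isolation: from $0\ne x=\sum c_{ij}E_{ij}$ with $c_{pq}\ne 0$, if $m\ge 5$ there is an index $r\notin\{p,q\}$ and a further index, so I can apply $\ad E$ twice or three times (e.g. $[E_{rp},[E_{rp},x]]$-type expressions, carefully using $[E_{ij},E_{rs}]=0$ when all four indices differ, to kill the "far" terms) and land on a nonzero multiple of some $E_{ab}$; then propagate as before. The case $m=3$ is small enough to check by hand: $L^{(1)}$ is $3$-dimensional with $[E_{12},E_{23}]=E_{13}$ etc., which is the familiar simple algebra ($\mathfrak{sl}(2)$-like; one should note that even in char $2$ this $3$-dimensional algebra has no proper nonzero ideal, which is a quick direct verification). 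Finally I must confirm $\dim L^{(1)}>1$: $\dim L^{(1)}=\binom{m}{2}\ge 3$ for $m\ge 3$, so the definition of "simple" is met.

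The main obstacle is the char-$2$ bookkeeping in the isolation step: the relation $[E_{ij},E_{jk}]=E_{ik}$ has no sign in this paper's convention ($E_{ij}=e_{ij}-e_{ji}$ but $-1=1$ only in char $2$; in odd characteristic one does get $[E_{ij},E_{jk}]=E_{ik}$ as well from $[e_{ij}-e_{ji},e_{jk}-e_{kj}]=e_{ik}-e_{ki}$, so actually the rule is characteristic-free as stated), so the real delicacy is making sure that when I bracket with diagonal elements in char $2$ the eigenvalue differences $h_{ii}+h_{jj}$ do not accidentally vanish when I want them nonzero — this forces the choice of separating element to be made with some care when $m=4$, which is exactly why $m=4$ is excluded. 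I would organize the proof so that the $m=4$ obstruction is visibly the only place the argument could fail, matching the remark already made in the excerpt that $L^{(1)}$ need not be simple when $m=4$.
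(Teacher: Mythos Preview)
Your proposal is correct and follows essentially the same two-step strategy as the paper: first show that any ideal (resp.\ $L$-submodule) of $L^{(1)}$ containing a single $E_{ij}$ is all of $L^{(1)}$ via the propagation rule $[E_{ij},E_{jk}]=E_{ik}$, then isolate a single $E_{ij}$ from an arbitrary nonzero element by iterated brackets, handling $m=3$ directly. The only difference is cosmetic: where you describe the isolation step heuristically (eigenspace separation via diagonal $h$, or ``$[E_{rp},[E_{rp},x]]$-type'' expressions), the paper writes down explicit bracket sequences, namely $(\ad E_{ts}\circ\ad E_{rs}\circ\ad E_{ji}\circ\ad E_{rj})(x)=x_{ij}E_{tj}$ with five distinct indices for the simplicity claim when $m\ge 5$, and $(\ad E_{rs}\circ\ad E_{jr}\circ\ad e_{ii})(x)=x_{ij}E_{is}$ with four distinct indices for the $L$-module claim when $m\ge 4$ and $\ell=2$.
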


\begin{proof}
When $m=3$, $L^{(1)}$ has basis $\{E_{12},E_{23},E_{31}\}$ with
multiplication table given by $[E_{ij},E_{jk}]=E_{ik}$ for
$\{i,j,k\}=\{1,2,3\}$. So $L^{(1)}$ is 3-dimensional and perfect,
and therefore   simple. Assume henceforth that $m\ge 4$.

Suppose first that $I$ is an ideal of $L^{(1)}$ such that
$E_{ij}\in I$ for some $i\ne j$. Let $r\ne s$ be indices such that
$\{i,j\}\ne \{r,s\}$. If $\{i,j\}\cap \{r,s\}=\emptyset$ or $j=r$,
then $$ [E_{ri},[E_{ij},E_{js}]]=E_{rs}\in I.
$$
So $I=L^{(1)}$ if $I$ contains a basis element.

Now suppose $m\ge 5$ and let $I$ be a nonzero ideal of $L^{(1)}$.
Let $x\in I$ with $x_{ij}\ne 0$ for some $i\ne j$. As $m\ge 5$, we
can pick indices $r,s,t$ such that $|\{i,j,r,s,t\}|=5$. Since
$$
(\mathrm{ad}E_{ts}\circ\mathrm{ad}E_{rs}\circ\mathrm{ad}E_{ji}
\circ\mathrm{ad}E_{rj})(x) = x_{ij}E_{tj},
$$
we deduce $E_{tj}\in I$. Thus $I=L^{(1)}$.

Finally suppose $m\ge 4$ and $\ell=2$, and let $W$ be a nonzero
$L$-submodule of $L^{(1)}$. Let $x\in W$ with $x_{ij}\ne 0$ for
some $i\ne j$. Let $r,s$ be indices such that $|\{i,j,r,s\}|=4$.
As
$$
(\mathrm{ad}E_{rs}\circ \mathrm{ad}E_{jr}\circ
\mathrm{ad}e_{ii})(x) = x_{ij}E_{is},
$$
we have $E_{is}\in W$. Since $W$ is in particular an ideal of
$L^{(1)}$, we infer $W=L^{(1)}$.
\end{proof}

\section{A composition series of the $\so(m)$-module $\gl(m)$ when $\ell\neq 2$}\label{dondeso}

We suppose throughout this section that $\ell\neq 2$, that $m\geq
2$, and that $f$ is non-degenerate and symmetric. We further assume that $L=L(I_m)=\so(m)$.

Note that $L$ consists of all skew-symmetric matrices and is
spanned by the matrices $E_{ij}$ with $i<j$, as defined in
\S\ref{secSoSimple}.

\begin{prop}\label{sosimple}
If $m=3$ or $m\ge 5$ then $L(f)$ is a simple Lie algebra.
\end{prop}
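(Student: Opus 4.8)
The plan is to reduce to the split form and then invoke Theorem~\ref{sosimple1} after a harmless extension of scalars. By \cite{K}, Theorems 18 and 20, the form $f$ is congruent to a form whose Gram matrix is $D=\mathrm{diag}(d_1,\dots,d_m)$ with each $d_i\neq 0$; since congruent forms yield isomorphic Lie algebras (see \S\ref{subcla}), it suffices to show that $L(D)$ is simple. Note that $\dim L(D)=\binom{m}{2}\geq 3>1$, so the only point in question is the absence of proper nonzero ideals.

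Set $K=F(\sqrt{d_1},\dots,\sqrt{d_m})$, a field of characteristic $\ell\neq 2$ over which $D$ is congruent to $I_m$ (via $S=\mathrm{diag}(d_1^{-1/2},\dots,d_m^{-1/2})$). Since $L(D)$ is the solution space in $\gl(m)$ of a linear system with coefficients in $F$, the $K$-Lie algebra $L(D)\otimes_F K$ is the corresponding solution space $\{X\in\gl_m(K):X'D=-DX\}$ over $K$, and by \S\ref{subcla} the congruence $S$ identifies it with $\so_m(K)=\{X\in\gl_m(K):X'=-X\}$. Theorem~\ref{sosimple1} shows that $\so_m(K)^{(1)}$ is a simple Lie algebra when $m=3$ or $m\geq 5$, and since $\ell\neq 2$ we have $\so_m(K)^{(1)}=\so_m(K)$ (opening of \S\ref{secSoSimple}); thus $\so_m(K)$, hence $L(D)\otimes_F K$, is simple. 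If $0\neq I\subsetneq L(D)$ were an ideal, then $I\otimes_F K$ would be a nonzero ideal of $L(D)\otimes_F K$ of $K$-dimension $\dim_F I<\dim_F L(D)=\dim_K(L(D)\otimes_F K)$, hence proper, contradicting simplicity. Therefore $L(D)$, and with it $L(f)$, is simple.

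This is essentially bookkeeping; the one step requiring a little care is the descent of simplicity along $F\subseteq K$, which is why I pass to the finite extension $K$ rather than an algebraic closure so that dimensions stay transparent. A self-contained alternative avoiding scalar extension altogether is to repeat verbatim the ideal manipulations from the proof of Theorem~\ref{sosimple1}, using the basis $G_{ij}=d_je_{ij}-d_ie_{ji}$ ($i<j$) of $L(D)$, which satisfies $[G_{ij},G_{jk}]=d_jG_{ik}$ for distinct $i,j,k$ and $[G_{ij},G_{rs}]=0$ for distinct $i,j,r,s$; since each $d_j\neq 0$, the very bracket chains that drove an arbitrary $E_{ij}$ (or an element with a nonzero off-diagonal entry) to all of $L^{(1)}$ there now drive an arbitrary nonzero element of $L(D)$ to all of $L(D)$.
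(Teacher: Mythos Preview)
Your proof is correct and follows exactly the route the paper takes: extend scalars so that $f$ becomes congruent to $I_m$, apply Theorem~\ref{sosimple1}, and descend simplicity back to $F$. The paper compresses all of this into the single phrase ``extending scalars if necessary''; your version simply makes the descent step explicit (and your optional direct argument with the $G_{ij}$ is a nice bonus, though not needed here).
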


\begin{proof}
This follows from Theorem \ref{sosimple1}, extending scalars if
necessary.
\end{proof}

\begin{note}\label{noso1}{\rm Suppose that $m=4$.
Then $L(f)$ is 6-dimensional. Moreover, $L(f)$ is a simple Lie
algebra if the discriminant of $f$ is not a square in $F$, and the
direct sum of two 3-dimensional simple ideals otherwise.

This can be found in  \cite{B}, Chapter I, \S 6, Exercise 26(b).
An alternative approach via current Lie algebras, independent of
whether $\ell\neq 2$ or not, can be found in \cite{CS}. }
\end{note}

\begin{note}\label{noso2}{\rm If $m=2$ then $L(f)$ is 1-dimensional.}
\end{note}

Consider next the $L$-module $M=M(I_m)$ consisting of all
symmetric matrices. This module has basis
$\{A_{ij}:i\le j\}$, where $A_{ij}$ is defined as $e_{ij}+e_{ji}$
for all $i,j$. The matrices $E_{rs}$ act on the $A_{ij}$ according
to the following rules.
\begin{itemize}
\item $[E_{ij},A_{ij}]=A_{ii}-A_{jj}$; \item
$[E_{ij},A_{jj}]=2A_{ij}$ for $i\ne j$; \item
$[E_{ij},A_{jk}]=A_{ik}$ if $\{i,j,k\}$ has size 3; \item
$[E_{ij},A_{rs}]=0$ if $\{i,j,r,s\}$ has size 4.
\end{itemize}

\begin{theorem}\label{pus0}
Suppose $m\ge 4$. Let $M^0 = M(I_m)\cap \mathfrak{sl}(m)$. Then:

(1) If $\ell\nmid m$, then $M^0$ is an irreducible $L$-module.

(2) If $\ell|m$, then $\mathfrak{s}$ is the only non-trivial
$L$-submodule of $M^0$, so $M^0/\mathfrak{s}$ is an irreducible
$L$-submodule.
\end{theorem}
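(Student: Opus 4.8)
The plan is to prove the following single statement, from which (1) and (2) both follow at once: \emph{if $W$ is a nonzero $L$-submodule of $M^0$ with $W\neq\s$, then $W=M^0$}. Indeed, $\s=\langle I_m\rangle$ is contained in $M^0$ exactly when $\ell\mid m$ (since $\tr(I_m)=m$), while $[L,\s]=0$; so this statement says that $M^0$ is irreducible when $\ell\nmid m$, and that when $\ell\mid m$ the only proper nonzero $L$-submodule of $M^0$ is $\s$, whence $M^0/\s$ is irreducible. Throughout I use that $M^0$ is spanned by the $A_{ij}$ with $i\neq j$ together with the traceless diagonal matrices (which in turn are spanned by the $A_{ii}-A_{jj}$), and the bracket formulas for the $E_{rs}$ acting on the $A_{ij}$ recorded just above.

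Two reductions are routine. First, if $A_{pq}\in W$ for some $p\neq q$, then $W=M^0$: repeated use of $[E_{ab},A_{bc}]=A_{ac}$ (legitimate since $m\geq 4$ supplies a third index) gives every $A_{ij}$ with $i\neq j$, then $[E_{ij},A_{ij}]=A_{ii}-A_{jj}$ gives every traceless diagonal matrix, and these span a subspace of dimension $\binom m2+(m-1)=\binom{m+1}2-1=\dim M^0$. Second, if $W$ contains a non-scalar diagonal matrix $x=\sum_k\lambda_k e_{kk}$, then choosing $r,s$ with $\lambda_r\neq\lambda_s$ gives $[E_{rs},x]=(\lambda_s-\lambda_r)A_{rs}\in W$, a nonzero multiple of an off-diagonal $A_{rs}$, and the first reduction applies. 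Hence a nonzero $W$ consisting entirely of diagonal matrices must consist of scalar matrices, so $W\subseteq\s$ and therefore $W=\s$; thus it remains only to treat the case in which $W$ contains an element $x$ with a nonzero off-diagonal entry, and to deduce $W=M^0$.

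So let $x\in W$ have a nonzero entry in position $(r,s)$, $r\neq s$, and, using $m\geq 4$, fix $t,u$ so that $r,s,t,u$ are distinct. A direct check from (\ref{mult}) shows that $\ad E_{su}\circ\ad E_{rt}$ annihilates every diagonal matrix and every $A_{kl}$ with $\{k,l\}\not\subseteq\{r,s,t,u\}$, while it sends $A_{rs}\mapsto A_{tu}$, $A_{tu}\mapsto A_{rs}$, $A_{ru}\mapsto -A_{st}$, $A_{st}\mapsto -A_{ru}$ and $A_{rt},A_{su}\mapsto 0$. Applying it to $x$ gives a nonzero element of $W$ lying in $\langle A_{rs},A_{tu},A_{ru},A_{st}\rangle$ whose $A_{tu}$-coefficient equals the $(r,s)$-entry of $x$. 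From this four-term element one extracts a nonzero multiple of a single $A_{pq}$ by a short explicit sequence of further brackets: unwanted slots are killed using $[E_{ij},A_{kl}]=0$ when $\{i,j,k,l\}$ has four elements, and one moves between an off-diagonal slot and a diagonal matrix using $[E_{ij},A_{ij}]=A_{ii}-A_{jj}$ and $[E_{ij},A_{ii}-A_{jj}]=-4A_{ij}$, the latter nonzero because $\ell\neq 2$. Once some $A_{pq}$ lies in $W$, the first reduction above yields $W=M^0$.

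The heart of the argument — and the step needing care — is the very last part: extracting a single $A_{pq}$ from the four-term element. This is elementary but involves a short case analysis, and it is exactly here that the hypothesis $m\geq 4$ (four distinct indices) is used. It is also here that $\ell=3$ must be watched: over a field of characteristic $3$ one has $-4\equiv -1$, so the eigenvalues $-1$ and $-4$ of $(\ad E_{rs})^2$ on $M^0$ coincide and a naive ``eigenvalue projection'' in $\ad E_{rs}$ no longer separates the $(r,s)$-slot from the slots $(r,l),(s,l)$; this is why the argument is routed through the auxiliary indices $t,u$ from the start, as the operator $\ad E_{su}\circ\ad E_{rt}$ already does.
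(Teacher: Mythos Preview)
Your argument follows the same outline as the paper's: the two reductions (an $A_{pq}$ in $W$ gives $W=M^0$; a non-scalar diagonal in $W$ yields such an $A_{pq}$), then the main case of a non-diagonal $x$. In that main case the paper finishes in one stroke: with $i,j,r,s$ pairwise distinct and $x_{ij}\neq 0$,
\[
(\ad E_{ir}\circ\ad E_{rs}\circ\ad E_{ir}\circ\ad E_{ij}\circ\ad E_{rs}\circ\ad E_{ir})(x)=-2x_{ij}A_{ir},
\]
which places an $A_{pq}$ in $W$ directly. Your initial step $\ad E_{su}\circ\ad E_{rt}$ is correct and lands you in $\langle A_{rs},A_{tu},A_{ru},A_{st}\rangle$ with $A_{tu}$-coefficient $x_{rs}\ne0$, as claimed; but then ``one extracts a nonzero multiple of a single $A_{pq}$ by a short explicit sequence of further brackets'' is asserted rather than carried out. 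You yourself flag this as ``the step needing care'', and your closing remarks about characteristic $3$ do not fill it.

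The gap is real but easy to close with the very tools you list. Writing $y=\alpha A_{rs}+\beta A_{tu}+\gamma A_{ru}+\delta A_{st}$ with $\beta=x_{rs}\neq 0$, one checks that $(\ad E_{rs})^2$ acts by $-4$ on $A_{rs}$, by $-1$ on $A_{ru}$ and $A_{st}$, and kills $A_{tu}$; hence
\[
(\ad E_{tu})^2\bigl(y+(\ad E_{rs})^2 y\bigr)=(\ad E_{tu})^2\bigl(-3\alpha A_{rs}+\beta A_{tu}\bigr)=-4\beta\,A_{tu}\neq 0.
\]
In characteristic $3$ the inner expression is already $\beta A_{tu}$, so the eigenvalue coincidence $-4\equiv-1$ you worried about actually helps here rather than hurts. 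Either supply this (or an equivalent) computation, or replace the whole step by a single explicit composition as the paper does.
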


\begin{proof}
Let $W\ne 0$ be an $L$-submodule of $M^0$. Suppose first that $W$
consists only of diagonal matrices and let $0\ne h\in W$. If
$i\ne j$ then $[E_{ij},h]=(h_{jj}-h_{ii})A_{ij}\in W$, whence
$h_{ii}=h_{jj}$, i.e., $h$ is scalar. This implies
$W=\mathfrak{s}$ in the case $\ell|m$, and is a contradiction in
the case $\ell\nmid m$.

Now suppose $A_{ij}\in W$ for some $i\ne j$. Let $r,s$ be distinct indices
such that $\{i,j\}\ne \{r,s\}$. If $j=r$ then
$[E_{si},A_{ij}]=A_{sr}\in W$, whereas $\{i,j\}\cap
\{r,s\}=\emptyset$ implies $[E_{sj},[E_{ri},A_{ij}]]=A_{sr}\in W$.
In addition $[E_{sr},A_{sr}]=A_{ss}-A_{rr}\in W$. Therefore
$W=M^0$ if $W$ contains a basis element $A_{ij}$ with $i\ne j$.

Finally, suppose that $W$ contains a non-diagonal matrix $x$. So
$x_{ij}\ne 0$ for some $i\ne j$. As $m\ge 4$, we can find indices
$r,s$ such that $\{i,j,r,s\}$ has size 4. Since
$$
(\mathrm{ad}E_{ir}\circ \mathrm{ad}E_{rs}\circ
\mathrm{ad}E_{ir}\circ \mathrm{ad}E_{ij}\circ
\mathrm{ad}E_{rs}\circ \mathrm{ad}E_{ir})(x) = -2x_{ij}A_{ir},
$$
we deduce $A_{ir}\in W$, which implies $W=M^0$.
\end{proof}

\begin{cor}
\label{pus2} Suppose that $m\geq 4$. Then

(1) If $\ell$ does not divide $m$ then $M(f)\cap\sl(V)$ is an
irreducible $L(f)$-module of dimension~${{m+1}\choose{2}}-1$.

(2) If $\ell$ divides $m$ then $M(f)\cap\sl(V)/\s$ is an
irreducible $L(f)$-module of dimension ${{m+1}\choose{2}}-2$.
\end{cor}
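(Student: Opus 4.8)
The plan is to deduce this corollary directly from Theorem \ref{pus0} by transporting the module structure along the isomorphisms already established. First I would invoke the matrix model from \S\ref{subcla}: since $f$ is non-degenerate, symmetric and $\ell\neq 2$, we may assume (after extending scalars if necessary, or after passing to a diagonal Gram matrix and then to $I_m$ when $F=F^2$) that $f$ has Gram matrix $I_m$, so that $L(f)$ is identified with $L=\so(m)$ and $M(f)$ with $M=M(I_m)$, the space of symmetric matrices, via the Lie isomorphism $M_{\mathcal B}$. Under this identification $M(f)\cap\sl(V)$ corresponds to $M^0=M(I_m)\cap\sl(m)$, which has dimension ${{m+1}\choose{2}}-1$ since $\sl(m)$ is a hyperplane in $\gl(m)$ not containing all of $M(I_m)$ (for instance $I_m\in M(I_m)\setminus\sl(m)$ as $\ell\nmid 2$, so $\tr(I_m)=m$, but even if $\ell\mid m$ one has $e_{11}+e_{22}\in M(I_m)$ with nonzero trace when $m\geq 2$... more simply, $M(I_m)\not\subseteq\sl(m)$ because $\dim M(I_m)={{m+1}\choose{2}}$ while Corollary \ref{cor13} or Lemma \ref{lem18} forces $L(f)\subseteq\sl(V)$ and $\gl(V)=L(f)\oplus M(f)$ is not contained in $\sl(V)$).

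Next I would apply Theorem \ref{pus0} verbatim: if $\ell\nmid m$ then $M^0$ is an irreducible $L$-module, which gives part (1) after transporting back along $M_{\mathcal B}$; and if $\ell\mid m$ then $\s$ is the only nontrivial $L$-submodule of $M^0$, so $M^0/\s$ is irreducible, giving part (2). The dimension counts are then immediate: $\dim M^0={{m+1}\choose{2}}-1$ and, when $\ell\mid m$, $\dim(M^0/\s)={{m+1}\choose{2}}-2$ since $\s\subseteq M^0$ in that case by Corollary \ref{loro}(c).

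There is essentially no obstacle here; the corollary is a restatement of Theorem \ref{pus0} in coordinate-free language. The only point requiring a word of care is the reduction to the case $D=I_m$: when $F\neq F^2$ the form $f$ need not admit $I_m$ as a Gram matrix, but $L(f)$-module irreducibility is unaffected by extending scalars to a field $K$ with $K=K^2$ (a $K$-submodule invariant structure restricts to an $F$-submodule, and conversely an $F$-submodule spans a $K$-submodule of the same codimension), so the conclusion for $L(I_m)$ over $K$ descends to $L(f)$ over $F$. Alternatively one can run the argument of Theorem \ref{pus0} directly with the diagonal Gram matrix $D=\mathrm{diag}(d_1,\dots,d_m)$ using the multiplication rules for $L(D)$ acting on $M(D)$, the extra scalars $d_i$ being units and therefore harmless. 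I would include a one-line remark to this effect and otherwise let the proof consist of the two displayed implications above.
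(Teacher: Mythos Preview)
Your approach is essentially identical to the paper's: the paper's proof consists of two sentences, saying that the dimensions are clear from the matrix description of $M(f)$ and that irreducibility follows from Theorem~\ref{pus0}, ``extending scalars if necessary.'' You have simply unpacked this, correctly. One small remark: in your descent argument you only need the direction ``an $F$-submodule spans a $K$-submodule of the same (co)dimension,'' which is what you state second; the first clause (that a $K$-submodule restricts to an $F$-submodule) is neither needed nor true in general, so I would drop it.
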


\begin{proof}
The stated dimensions are clear from the matrix version of $M(f)$.
Irreducibility follows from Theorem \ref{pus0}, extending scalars
if necessary.
\end{proof}

\begin{note}{\rm Suppose that $\ell\nmid m$ and $m>2$. Then Corollaries \ref{loro} and
\ref{pus2} show that the traceless matrices $A\in\gl(m)$ described
in part (4) of Theorem \ref{matver2} form an irreducible
$L(f)$-module of dimension ${{m+1}\choose{2}}-1$, isomorphic to
the kernel of the contraction map $S^2(V)\to F$ given by $v\otimes
w+w\otimes v\mapsto f(v,w)$.

When $F=\C$ and $m>4$ this gives an elementary matrix description
of $V(2\lambda_1)$, where $\lambda_1$ is first fundamental module
of the orthogonal Lie algebra $\so(m)$, as the space of all
traceless $m\times m$ symmetric matrices.}
\end{note}

\begin{note}\label{som22}{\rm
Suppose that $m=3$. If $\ell\ne 3$ then $M^0$ is an irreducible
$L$-module. Now suppose $\ell=3$. If $-1$ is not a square in $F$,
then $M^0/\s$ is irreducible. However, if $-1$ is a square in $F$,
say $i^2=-1$, the $L$-submodules of $M^0$ are $X$, $Y$ and $\s =
X\cap Y$, where }
$$
X = \mathrm{span} \left\lbrace I_3, \begin{pmatrix}
0 & 0 & 0 \\
0 & 1 & i \\
0 & i & -1
\end{pmatrix}, \begin{pmatrix}
0  & i & -1 \\
i  & 0 & 0 \\
-1 & 0 & 0
\end{pmatrix} \right\rbrace,
$$
$$
Y = \mathrm{span} \left\lbrace I_3, \begin{pmatrix}
0 & 0  & 0 \\
0 & -1  & i \\
0 & i & 1
\end{pmatrix}, \begin{pmatrix}
0 & i & 1 \\
i & 0 & 0 \\
1 & 0 & 0
\end{pmatrix} \right\rbrace.
$$

\end{note}

\begin{note}\label{som33}{\rm
Suppose $m=2$. If $-1$ is not a square in $F$ then $M^0$ is
irreducible as a module over $L$. If $-1$ is a square in $F$, say
$i^2=-1$, the only $L$-submodules of $M^0$ are $Fx$ and $Fy$,
where
$$
x = \begin{pmatrix}
1 & i \\
i & -1
\end{pmatrix} \quad \text{and} \quad y = \begin{pmatrix}
-1  & i \\
i & 1
\end{pmatrix}.
$$
}
\end{note}

Combining the results of this section with  Corollary \ref{cor12},
Proposition \ref{gam}, Theorem \ref{matver2}, and Corollary
\ref{loro}, we obtain the following theorem.

\begin{theorem} \label{61} Suppose that $\ell\neq 2$, that $m\geq 2$, and that $f$ is
non-degenerate and symmetric. Then

(1) $M(f)$ is the orthogonal complement to $L(f)$ with respect to
the bilinear form $\vp:\gl(V)\times \gl(V)\to F$, given by
$\vp(x,y)=\tr(xy)$. Moreover, there is a basis of $V$ relative to
which $f$ has Gram matrix $D=\mathrm{diag}(d_1,\dots,d_n)$ and,
relative to this basis, $M(f)$ consists of all $A\in\gl(m)$ such
that $d_iA_{ij}=d_j A_{ji}$. Furthermore, $M(f)$ is isomorphic to
$S^2(V)$ as $L(f)$-module.

(2) $M(f)\cap\sl(V)$ consists, relative to the above basis, of all
matrices $A\in\gl(m)$ such that $d_iA_{ij}=d_j A_{ji}$ and
$\tr(A)=0$, and is isomorphic to the kernel of the contraction
$L(f)$-epimorphism $S^2(V)\to F$ given by $vw\to f(v,w)$.

(3) If $m\geq 4$ and $\ell\nmid m$ then $M(f)\cap\sl(V)$ is an
irreducible $L(f)$-module of dimension~${{m+1}\choose{2}}-1$.

(4) If $m\geq 4$ and $\ell\mid m$ then $M(f)\cap\sl(V)/\s$ is an
irreducible $L(f)$-module of dimension ${{m+1}\choose{2}}-2$.

(5) If $m=3$ or $m\geq 5$ then $L(f)$ is a simple Lie algebra,
isomorphic to $\gl(V)/M(f)$ and $\Lambda^2(V)$ as $L(f)$-modules.

(6) The following are composition series of the $L(f)$-module
$\gl(V)$:
$$
0\subset \s\subset M(f)\cap\sl(V)\subset M(f)\subset \gl(V),\text{
if }m\geq 4\text{ and }\ell|m,
$$
$$
0\subset M(f)\cap\sl(V)\subset M(f)\subset \gl(V),\text{ if }m\geq
4\text{ and }\ell\nmid m.
$$
In any case, $M(f)/M(f)\cap\sl(V)$ is the trivial $L(f)$-module.
\end{theorem}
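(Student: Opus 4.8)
The plan is to derive Theorem \ref{61} by assembling results already in place, chiefly from \S\ref{secbas}, \S\ref{secid} and the present section; essentially no new computation is required. The backbone is the orthogonal decomposition $\gl(V)=L(f)\perp M(f)$ of Corollary \ref{cor12}, the $L(f)$-isomorphism $\Gamma\colon V\otimes V\to\gl(V)$ of \S\ref{secid}, and Proposition \ref{gam}(b), which says that $\Gamma$ carries $S^2(V)$ onto $M(f)$ and $\Lambda^2(V)$ onto $L(f)$.

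First I would handle parts (1) and (2). Since $\vp$ is non-degenerate, $M(f)=L(f)^{\perp}$ follows at once from Corollary \ref{cor12}; the diagonal Gram matrix $D=\mathrm{diag}(d_1,\dots,d_m)$ comes from the classification recalled in \S\ref{subcla}, relative to which $M_{\mathcal B}$ sends $M(f)$ onto $M(D)=\{A\,|\,d_iA_{ij}=d_jA_{ji}\}$, and $M(f)\cong S^2(V)$ is Proposition \ref{gam}(b) (equivalently Theorem \ref{matver2}). For (2) I would intersect with $\sl(V)$, read off the matrix form, and invoke Corollary \ref{loro}(b): $\Gamma$ identifies $S^2(V)\cap\ker\Omega$ with $M(f)\cap\sl(V)$, while the restriction of $\Omega$ to $S^2(V)$ is, up to a harmless factor of $2$ since $\ell\ne2$, the contraction $L(f)$-epimorphism $S^2(V)\to F$ attached to $f$, which Lemma \ref{contra2} identifies with the trace.

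Parts (3) and (4) are Corollary \ref{pus2}(1) and (2) verbatim, themselves consequences of Theorem \ref{pus0} applied after extending scalars, if necessary, so that $f$ becomes equivalent to $I_m$; the dimensions use $\dim M(f)=\binom{m+1}{2}$ from \S\ref{subcla}, hence $\dim(M(f)\cap\sl(V))=\binom{m+1}{2}-1$, dropping by one more modulo $\s$ when $\ell\mid m$, where $\s\subseteq M(f)\cap\sl(V)$ by Corollary \ref{loro}(c). For (5), simplicity of $L(f)$ when $m=3$ or $m\ge5$ is Proposition \ref{sosimple}, the isomorphism $\gl(V)/M(f)\cong L(f)$ of $L(f)$-modules comes from the direct sum $\gl(V)=L(f)\oplus M(f)$ (Lemma \ref{lem17}), and $L(f)\cong\Lambda^2(V)$ is the remaining clause of Proposition \ref{gam}(b).

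Finally, for (6) I would check that the displayed chains consist of $L(f)$-submodules ($\s\subseteq M(f)\cap\sl(V)$ when $\ell\mid m$ by Corollary \ref{loro}(c); $M(f)\cap\sl(V)$ and $M(f)$ are $L(f)$-submodules of $\gl(V)$ by Lemma \ref{lem11}) and then identify the successive quotients: $\s$ is trivial; $(M(f)\cap\sl(V))/\s$, resp.\ $M(f)\cap\sl(V)$, is irreducible by (4), resp.\ (3); $M(f)/(M(f)\cap\sl(V))$ is one-dimensional, being the image of $M(f)$ under the trace, an $L(f)$-homomorphism into the trivial module, nonzero because $e_{11}\in M(D)$ has trace $1$, hence that quotient is trivial; and $\gl(V)/M(f)\cong L(f)$ is irreducible by (5). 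Stacking these exhibits the stated chains as composition series. The one place needing separate comment is $m=4$: there $\gl(V)/M(f)\cong L(f)$ need not be irreducible, since by Note \ref{noso1} it can split into two $3$-dimensional simple ideals when the discriminant of $f$ is a square, so the top term is a genuine composition factor only for $m=3$ or $m\ge5$. I expect the book-keeping of which of the hypotheses $m=3$, $m=4$, $m\ge5$, $\ell\mid m$, $\ell\nmid m$ triggers which cited statement --- together with the reminder that parts (3)--(6) require $m\ge4$ precisely because for $m=2,3$ the irreducibility of $M(f)\cap\sl(V)$ (or of its quotient by $\s$) is sensitive to whether $-1$ is a square in $F$, as in Notes \ref{som22} and \ref{som33} --- to be the only real source of friction.
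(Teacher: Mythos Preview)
Your proposal is correct and matches the paper's own approach essentially line for line: the paper's proof of Theorem \ref{61} is a one-sentence reference to Corollary \ref{cor12}, Proposition \ref{gam}, Theorem \ref{matver2}, Corollary \ref{loro}, together with the results of \S\ref{dondeso} (Proposition \ref{sosimple}, Theorem \ref{pus0}, Corollary \ref{pus2}), which is precisely the list you assemble. Your observation about $m=4$ is apt and goes slightly beyond what the paper says here: the chain in (6) is a composition series at the top step only when $\gl(V)/M(f)\cong L(f)$ is irreducible, which by Note \ref{noso1} can fail for $m=4$ when the discriminant of $f$ is a square; the paper flags this phenomenon in the surrounding discussion but does not spell out the required refinement of the chain in the statement of (6).
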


\section{A composition series of the $\sy(2n)$-module $\gl(2n)$ when $\ell\neq 2$}\label{donde}

We assume throughout this section that $\ell\neq 2$, that $m=2n$,
and that $f$ is non-degenerate and skew-symmetric.

\begin{theorem}\label{spsimple} The symplectic Lie algebra $L(f)$ is
simple.
\end{theorem}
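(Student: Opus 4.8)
The plan is to argue directly with the matrix model $L(f)=L(J)=\sy(2n)$ described in \S\ref{subcla}, whose elements are the $2n\times 2n$ matrices with diagonal blocks $A,-A'$ (where $A\in\gl(n)$ is arbitrary) and off-diagonal blocks $B,C$ (symmetric $n\times n$ matrices). Since $\dim L(f)=2n^2+n>1$, by the definition of simplicity it suffices to show that every nonzero ideal $I$ of $L(f)$ equals $L(f)$. The case $n=1$ is immediate: then $\sy(2)=\sl(2)$ is three–dimensional and satisfies $\sl(2)^{(1)}=\sl(2)$, because $[e_{11}-e_{22},e_{12}]=2e_{12}$ and $[e_{12},e_{21}]=e_{11}-e_{22}$ (here $\ell\neq 2$ first enters), hence $\sl(2)$ is simple. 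So assume $n\geq 2$. The proof then has a \emph{propagation step} and an \emph{isolation step}.

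For the propagation step I work with the spanning set of $L(f)$ consisting of the matrices $\widehat e_{ij}:=e_{ij}-e_{n+j,\,n+i}$ $(i\neq j)$, the matrices $e_{i,\,n+i}$ and $e_{i,\,n+j}+e_{j,\,n+i}$ $(i\neq j)$ supported in the upper–right block, the matrices $e_{n+i,\,i}$ and $e_{n+i,\,j}+e_{n+j,\,i}$ $(i\neq j)$ supported in the lower–left block, and the diagonal elements $h_i:=e_{ii}-e_{n+i,\,n+i}$. Using only (\ref{mult}) one checks a short list of brackets — representative cases being $[\widehat e_{ij},\widehat e_{jk}]=\widehat e_{ik}$, $[\widehat e_{ij},e_{j,\,n+j}]=e_{i,\,n+j}+e_{j,\,n+i}$, $[e_{i,\,n+i},e_{n+i,\,i}]=h_i$ and $[h_i,e_{i,\,n+i}]=2\,e_{i,\,n+i}$, together with their transposes and index permutations. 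Two consequences are immediate. First, every member of the spanning set lies in $L(f)^{(1)}$: each $h_i$ is a bracket of two of them, and each of the remaining (``root'') vectors $x$ satisfies $x=\lambda^{-1}[h_i,x]$ for a suitable $i$ and a scalar $\lambda\in\{\pm1,\pm2\}$, which is invertible since $\ell\neq 2$; hence $L(f)^{(1)}=L(f)$. Second, an ideal containing just one element of the spanning set, or one $h_i$, must contain all of them, and therefore equals $L(f)$ — the index chains run freely because $n\geq 2$.

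The isolation step asserts that a nonzero ideal $I$ meets the spanning set. Let $0\neq x\in I$ have diagonal blocks $A,-A'$ and off-diagonal blocks $B,C$. Bracketing $x$ with an upper–right block generator gives an element of $I$ whose lower–left block vanishes, and bracketing with a lower–left block generator gives an element of $I$ whose upper–right block vanishes; moreover, choosing a suitable such generator, the resulting element can be taken nonzero whenever the block it eliminates was nonzero. Iterating, one arrives at a nonzero element of $I$ of one of three forms: (a) off-diagonal blocks zero and $A\neq 0$; (b) supported in the upper–right block; (c) supported in the lower–left block. One further bracket with a lower–left generator sends a matrix of form (b) to one of form (a) (and (c) is symmetric), so we may assume we are in case (a). On such matrices $\ad\widehat e_{ij}$ and $\ad h_i$ act exactly as $\ad e_{ij}$ and $\ad e_{ii}$ act on $A\in\gl(n)$; hence: if $A$ has a nonzero off-diagonal entry, the $\ad$-chain from the proof of Theorem \ref{simplysl2} (which for $\ell\neq 2$ is valid for all $n\geq 2$) carries $x$ to a nonzero multiple of some $\widehat e_{rs}$; if $A$ is diagonal but not scalar, one bracket with a suitable $\widehat e_{ij}$ reduces to the previous case; and if $A$ is a nonzero scalar, then $x$ is a nonzero multiple of $h_1+\dots+h_n$ and $[h_1+\dots+h_n,\,e_{1,\,n+1}]=2\,e_{1,\,n+1}\in I$ is a nonzero multiple of a spanning element (again $\ell\neq 2$). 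Combining the two steps yields $I=L(f)$.

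I expect the isolation step to be the only genuine obstacle. It is a finite but somewhat fussy piece of block bookkeeping, and the main point requiring care is checking that, at each stage, the bracket chosen does not vanish — which is precisely where $\ell\neq 2$ gets used repeatedly (for the coefficient $2$ attached to the long-root vectors $e_{i,\,n+i}$, and in the scalar sub-case) — together with verifying that enough indices remain available when $n=2$. A shorter alternative is to pass to an algebraic closure of $F$, as in Proposition \ref{sosimple}, and then invoke the standard Cartan-subalgebra-and-root-spaces argument; but that too reduces to showing a nonzero ideal meets a root space, so the essential content is unchanged.
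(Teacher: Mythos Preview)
Your plan is sound and will work, and its overall shape---reduce a nonzero element of the ideal to one supported in a single block, then propagate through root-vector brackets---is the same as the paper's.  Two comments, one on a soft spot in your sketch and one comparing the executions.

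\medskip

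\textbf{The isolation step.}  Your ``iterating'' is the one place where the argument as written is not self-evidently terminating: bracketing with an upper-right generator kills the lower-left block, but then bracketing with a lower-left generator to kill the upper-right block may reintroduce a lower-left block, and your stated nonvanishing criterion (``whenever the block it eliminates was nonzero'') does not cover the second step.  In fact two successive brackets with upper-right generators already land you in case (b)---the first kills $C$ and, if $C\neq 0$, can be chosen so the new diagonal block $B_0C$ is nonzero; the second then kills the diagonal block and, since that block is nonzero, can be chosen so the resulting upper-right block $-(A_1B_0'+B_0'A_1')$ is nonzero.  A cleaner device avoids this bookkeeping entirely: the element $h=h_1+\cdots+h_n=I_n\oplus(-I_n)$ lies in $L(f)$, and $\ad h$ has eigenvalues $2,0,-2$ on the upper-right, diagonal, and lower-left blocks respectively.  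Since $\ell\neq 2$ these eigenvalues are distinct, so every $\ad h$-stable subspace---in particular every ideal $I$---splits as $(I\cap T^+)\oplus(I\cap T^0)\oplus(I\cap T^-)$, and a nonzero $I$ meets one of the three pieces nontrivially.  This gives you (a), (b), or (c) for free.

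\medskip

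\textbf{Comparison with the paper.}  The paper's reduction is more compact: with $y=\left(\begin{smallmatrix}0&I_n\\0&0\end{smallmatrix}\right)$ and $z=\left(\begin{smallmatrix}0&0\\I_n&0\end{smallmatrix}\right)$ it computes the closed formulas $(\ad y)(\ad z)^2(x)=-2(b\oplus -b)$ and $(\ad z)(\ad y)^2(x)=2(c\oplus -c)$, which immediately reduce to $b=c=0$, $a\neq 0$.  From there the paper's propagation differs from yours in the auxiliary result it invokes: rather than your root-vector chase plus Theorem \ref{simplysl2}, it uses the $\gl(n)$-action on the diagonal block to fill out $\sl(n)$ there, applies $\ad y$ to land nontrivially in the upper-right symmetric block $S$, and then cites Theorem \ref{ugas33} (irreducibility of $S$ as an $\sl(n)$-module) to swallow $S$ whole.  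Your route trades that single citation for a longer but more self-contained bracket computation; both are legitimate.
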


\begin{proof}
We show that $L=L(J)$ is simple, where $J$ is defined in
(\ref{defjo}). We can assume $m\ge 4$ because $\sy(2)=\sl(2)$. Let
$I$ be a nonzero ideal of $L$ and suppose $0\ne x \in I$. Write
$$
x = \begin{pmatrix}
a & b \\
c & -a'
\end{pmatrix}.
$$
with $a,b,c\in \gl(n)$ and $b,c$ symmetric. Let
$$
y = \begin{pmatrix}
0 & I_n \\
0 & 0
\end{pmatrix}, \quad z = \begin{pmatrix}
0 & 0 \\
I_n & 0
\end{pmatrix}.
$$

Let $S,B$ be the subspaces of $L$ defined by
$$
S = \left\lbrace \begin{pmatrix}
0 & s\\
0 & 0
\end{pmatrix} : s\in \gl(n), \, s'=s \right\rbrace, \quad
B = \left\lbrace \begin{pmatrix}
0 & 0\\
s & 0
\end{pmatrix} : s\in \gl(n), \, s'=s \right\rbrace.
$$

Since $(\mathrm{ad}y\circ \mathrm{ad}z \circ
\mathrm{ad}z)(x)=-2(b\oplus -b)$ and $(\mathrm{ad}z\circ
\mathrm{ad}y \circ \mathrm{ad}y)(x)=2(c\oplus -c)$, we can assume
that $b=c=0$ and $a\ne 0$. Moreover, we can assume that $a$ is not
scalar, for otherwise $[z,[e_{1,n+2}+e_{2,n+1},a]]$ has the
desired form. Then the action of $\gl(n)$ on $I$ yields
$u\oplus(-u')\in I$ for all $u\in\sl(n)$. Applying $\mathrm{ad}y$
we obtain $I\cap S\ne 0$, hence $S\subset I$ by Theorem
\ref{ugas33}. Analogously $B\subset I$. Since
$[y,e_{n+1,1}]=e_{11}\oplus -e_{11}$, we conclude that $I=L$.
\end{proof}



\begin{theorem}
\label{pus} Suppose that $m>2$.

(1) If $\ell\nmid m$ then $M(f)\cap\sl(V)$ is an irreducible
$L(f)$-module of dimension ${{m}\choose{2}}-1$.

(2) If $\ell\mid m$ then $M(f)\cap\sl(V)/\s$ is an irreducible
$L(f)$-module of dimension ${{m}\choose{2}}-2$.
\end{theorem}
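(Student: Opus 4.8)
The plan is to imitate the strategy already used in Theorem~\ref{pus0} for the orthogonal case, working inside the matrix model $L=L(J)=\sy(2n)$ with its $L$-submodule $M\cap\sl(2n)$, whose matrix description is recorded in Theorem~\ref{matver} and Corollary~\ref{loro}(a): it consists of all matrices
$$
\left(\begin{array}{cc} a & b\\ c & a'\end{array}\right),\quad a\in\sl(n),\ b,c\ \text{skew-symmetric}.
$$
By Corollary~\ref{loro}(c), $\s\subseteq M(f)\cap\sl(V)$ exactly when $\ell\mid m$, so the two cases are genuinely distinct. By extending scalars if necessary we may assume $F=F^2$, so that $L(f)\cong\sy(2n)=L$. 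The dimension counts are immediate from the matrix model: $\dim(M\cap\sl(2n))=2n^2-n-1={{m}\choose 2}-1$, and when $\ell\mid m$ one subtracts a further $1$ for $\s$.

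First I would let $W\neq 0$ be an $L$-submodule of $M\cap\sl(2n)$ (respectively, of $(M\cap\sl(2n))/\s$, which I handle by pulling back to a submodule $W$ with $\s\subseteq W$). The key structural observation is that $W$ inherits $L$-module maps to each of the three ``blocks'': the $\sl(n)$-part via $\left(\begin{smallmatrix} a&b\\ c&a'\end{smallmatrix}\right)\mapsto a$, and the two skew-symmetric blocks $S=\{\left(\begin{smallmatrix}0&b\\0&0\end{smallmatrix}\right)\}$ and $B=\{\left(\begin{smallmatrix}0&0\\c&0\end{smallmatrix}\right)\}$, which are copies of the $\sl(n)$-module $C$ (skew-symmetric matrices) from \S\ref{sec2}. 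The plan is: (i) using the brackets $\ad y$, $\ad z$ with $y=\left(\begin{smallmatrix}0&I_n\\0&0\end{smallmatrix}\right)$, $z=\left(\begin{smallmatrix}0&0\\I_n&0\end{smallmatrix}\right)$ as in the proof of Theorem~\ref{spsimple}, show that if $W$ contains any element with nonzero $b$- or $c$-block then $W\cap S\neq 0$ or $W\cap B\neq 0$; (ii) invoke Theorem~\ref{ugas} (irreducibility of $C$ as an $\sl(n)$-module) to conclude $S\subseteq W$ or $B\subseteq W$; (iii) bracket $S$ against $B$ via \eqref{req2} to produce diagonal elements $e_{ii}\oplus(-e_{ii})-e_{jj}\oplus(-e_{jj})$, hence all of the $\sl(n)$-diagonal, and then fill out the full $\sl(n)$-block using Theorem~\ref{ugas33} ($S$ irreducible for $\sl(n)$) together with \eqref{req3}, \eqref{req4}; (iv) once $W$ contains the whole $a$-block and one of $S,B$, get the other from $[\,\cdot\,,y]$ or $[\,\cdot\,,z]$, giving $W=M\cap\sl(2n)$. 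It remains to rule out the possibility that every element of $W$ has $b=c=0$: if so, $W$ maps into the $\sl(n)$-block, but $\ad y$ and $\ad z$ carry a nonzero $a$ into nonzero $S$- and $B$-components (by \eqref{req3}, \eqref{req4} with $B=C=I_n$, using $\ell\neq 2$ so that $a+a'\neq 0$ for suitable non-symmetric $a$, or directly $2a$ in the appropriate entries), a contradiction unless $W\subseteq\s$; and $W\subseteq\s$ is excluded when $\ell\nmid m$ (as $\s\not\subseteq\sl(V)$) and becomes exactly the quotiented-out piece when $\ell\mid m$.

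The main obstacle I anticipate is step (iii): passing from a single skew-symmetric block $S\subseteq W$ to the full traceless diagonal, and from there to the entire $\sl(n)$-block, while keeping careful track of the factors of $2$ (harmless since $\ell\neq 2$) and of the fact that the bracket \eqref{req2} of two symmetric matrices $BC\oplus CB$ need not be symmetric — so one must project back into the symmetric part, exactly the ``symmetrization'' trick from the proof of Theorem~\ref{ugas33}. A clean way around this is to observe that once $S\subseteq W$, formula \eqref{req2} with $C$ symmetric gives $BC\oplus CB\in W$ for all symmetric $B$; symmetrizing and specializing $C=e_{11}$ yields $e_{11}\oplus e_{11}$ minus a scalar correction, and varying the indices produces all traceless diagonal matrices $\oplus$ their transposes, whence the first case in the proof of Theorem~\ref{pus0} (the diagonal-matrix argument) applies verbatim to finish. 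Then Theorem~\ref{ugas33} upgrades the diagonal to the full $\sl(n)$-block. With $a$-block and $S$ both inside $W$, applying $\ad z$ recovers $B$, so $W=M\cap\sl(2n)$ (resp. $W/\s=(M\cap\sl(2n))/\s$), completing the proof.
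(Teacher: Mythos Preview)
Your approach is essentially the paper's own: the paper proves this by saying ``argue like in Theorem~\ref{spsimple}'', and your plan---the block decomposition of $M(J)\cap\sl(2n)$, isolating the off-diagonal skew blocks via $\ad y$ and $\ad z$, then invoking the $\sl(n)$-irreducibility results of Theorems~\ref{ugas} and~\ref{ugas33}---is precisely that argument transported from $L(J)$ to $M(J)$. A few of your explicit computations are off (for instance $bC$ with $b$ skew-symmetric and $C=e_{11}$ does not yield $e_{11}$, and the step ``Theorem~\ref{ugas33} upgrades the diagonal to the full $\sl(n)$-block'' should cite Theorem~\ref{simplysl2} instead, since the $a$-block carries the adjoint $\sl(n)$-action, not the symmetric-square action), but these are cosmetic and the strategy goes through.
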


\begin{proof}
Let $J$ like in (\ref{defjo}). If $D$ is an $L(J)$-submodule of
$M(J)\cap \sl(m)$ properly containing $M(J)\cap \sl(m) \cap \s $,
we deduce $D=M(J)\cap \sl(m)$ arguing like in (\ref{spsimple}).
\end{proof}

\begin{note}{\rm Suppose that $\ell\nmid m$ and $m>2$. Then Corollary \ref{loro} and Theorem \ref{pus} show that the
subspace of $\gl(2n)$ of all matrices (\ref{qa}) such that
$b,c\in\gl(n)$ are skew-symmetric and $a\in\sl(n)$ is an
irreducible $L(f)$-module of dimension ${{m}\choose{2}}-1$,
isomorphic to the kernel of the contraction map $\Lambda^2(V)\to
F$, $v\wedge w\mapsto f(v,w)$.

When $F=\C$ this gives an elementary matrix description of
$V(\lambda_2)$, the second fundamental module of the symplectic
Lie algebra $\sy(2n)$. }
\end{note}

\begin{note}{\rm If $m=2$ then $M(f)=\s$ is 1-dimensional.}
\end{note}

Combining the results of this section with Corollary \ref{cor12},
Theorems \ref{gam} and \ref{matver}, and Corollary \ref{loro}, we
obtain the following theorem.

\begin{theorem} \label{51} Suppose that $\ell\neq 2$, that $m=2n$, and that $f$ is
non-degenerate and skew-symmetric. Then

(1) $M(f)$ is the orthogonal complement to $L(f)$ with respect to
the bilinear form $\vp:\gl(V)\times \gl(V)\to F$, given by
$\vp(x,y)=\tr(xy)$. Moreover, $M(f)$ consists, relative to
suitable basis of $V$, of all matrices
\begin{equation}\label{maint33}
\left(\begin{array}{cc}
   A & B\\
   C & A'
  \end{array}
\right),\, A,B,C\in\gl(n),\text{ where }B,C\text{ are
skew-symmetric}.
\end{equation}
Furthermore, $M(f)$ is isomorphic to $\Lambda^2(V)$ as
$L(f)$-module.

(2) $M(f)\cap\sl(V)$ consists of all matrices (\ref{maint33}) such
that $\tr(A)=0$ and is isomorphic to the kernel of the contraction
$L(f)$-epimorphism $\Lambda^2(V)\to F$ given by $v\wedge w\to
f(v,w)$.

(3) If $m>2$ and $\ell\nmid m$ then $M(f)\cap\sl(V)$ is an
irreducible $L(f)$-module of dimension ${{m}\choose{2}}-1$.

(4) If $m>2$ and $\ell\mid m$ then $M(f)\cap\sl(V)/\s$ is an
irreducible $L(f)$-module of dimension ${{m}\choose{2}}-2$.

(5) $L(f)$ is a simple Lie algebra, isomorphic to $\gl(V)/M(f)$
and $S^2(V)$ as $L(f)$-modules.

(6) The following are composition series of the $L(f)$-module
$\gl(V)$:
$$
0\subset \s\subset M(f)\cap\sl(V)\subset M(f)\subset \gl(V),\text{
if }m>2\text{ and }\ell|m,
$$
$$
0\subset M(f)\cap\sl(V)\subset M(f)\subset \gl(V),\text{ if
}m>2\text{ and }\ell\nmid m,
$$
$$
0\subset M(f)\subset \gl(V),\text{ if }m=2.
$$
In any case, $M(f)/M(f)\cap\sl(V)$ is the trivial $L(f)$-module.
\end{theorem}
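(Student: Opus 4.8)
The plan is to assemble Theorem \ref{51} from material already in hand, since each of its six clauses either restates or follows immediately from an earlier statement. For part (1), I would invoke Corollary \ref{cor12}, which gives the orthogonal decomposition $\gl(V)=L(f)\perp M(f)$ relative to $\vp$, so that $M(f)=L(f)^\perp$; the matrix description of $M(f)$ relative to a symplectic basis is recorded in \S\ref{subcla}; and Proposition \ref{gam}(a), transported through the $L(f)$-isomorphism $\Gamma:V\otimes V\to\gl(V)$, yields $M(f)\cong\Lambda^2(V)$. This is exactly Theorem \ref{matver}, which I would simply cite. For part (2), intersecting the matrix form (\ref{maint33}) of $M(f)$ with $\sl(V)$ imposes $\tr(A)+\tr(A')=2\tr(A)=0$, hence $\tr(A)=0$ since $\ell\neq 2$; the identification of this subspace with the kernel of the contraction $\Lambda^2(V)\to F$, $v\wedge w\mapsto f(v,w)$, is Corollary \ref{loro}(a), Lemma \ref{contra2} being what matches the abstract contraction with the trace.

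Parts (3) and (4) are precisely Theorem \ref{pus}, so nothing new is required. For part (5), simplicity of $L(f)=\sy(2n)$ is Theorem \ref{spsimple}; the direct-sum decomposition $\gl(V)=L(f)\oplus M(f)$ of $L(f)$-modules (Lemma \ref{lem17}) furnishes the isomorphism $\gl(V)/M(f)\cong L(f)$; and $L(f)\cong S^2(V)$ is again Proposition \ref{gam}(a) read through $\Gamma$.

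It then remains to check the composition series of part (6). First I would observe that $L(f)\subseteq\sl(V)$ by Lemma \ref{lem18} while $M(f)\not\subseteq\sl(V)$ (else $\gl(V)=L(f)+M(f)\subseteq\sl(V)$), so $M(f)\cap\sl(V)$ is a hyperplane of $M(f)$ and $M(f)/(M(f)\cap\sl(V))$ is one-dimensional; it is the trivial $L(f)$-module because the trace map $M(f)\to F$ is $L(f)$-equivariant (as $\tr([x,y])=0$), equivalently because the perfect Lie algebra $L(f)$ acts trivially on any line. When $m>2$ and $\ell\nmid m$, Corollary \ref{cor13} (applicable since $\ell\neq 2$ and $\ell\nmid m$) gives $\gl(V)=L(f)\perp(M(f)\cap\sl(V))\perp\s$, and the chain $0\subset M(f)\cap\sl(V)\subset M(f)\subset\gl(V)$ has successive quotients $M(f)\cap\sl(V)$ (irreducible by part (3)), the trivial module, and $\gl(V)/M(f)\cong L(f)$ (irreducible since $L(f)$ is simple). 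When $m>2$ and $\ell\mid m$ we have $\s\subseteq\sl(V)$, hence $\s\subseteq M(f)\cap\sl(V)$, and inserting $\s$ into that chain yields the stated series, the two new bottom factors being $\s$ (one-dimensional trivial) and $(M(f)\cap\sl(V))/\s$ (irreducible by part (4)). When $m=2$, $M(f)=\s$ is one-dimensional, hence trivially irreducible, while $\gl(V)/M(f)\cong L(f)=\sy(2)=\sl(2)$ is the adjoint module of a simple Lie algebra, so $0\subset M(f)\subset\gl(V)$ is a composition series. Since the whole argument is bookkeeping layered on Theorems \ref{spsimple} and \ref{pus}, there is no substantial obstacle; the only points demanding a moment's attention are the degenerate case $m=2$ and the verification that the one-dimensional factors $M(f)/(M(f)\cap\sl(V))$ and $\s$ are genuinely trivial rather than merely irreducible.
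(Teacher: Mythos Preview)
Your proposal is correct and follows the same route as the paper: Theorem \ref{51} is assembled by citing Corollary \ref{cor12}, Proposition \ref{gam}, Theorem \ref{matver}, Corollary \ref{loro}, Theorem \ref{spsimple}, and Theorem \ref{pus}, exactly as you do. You supply more explicit detail on the verification of part (6) than the paper, which simply says ``combining the results of this section with Corollary \ref{cor12}, Theorems \ref{gam} and \ref{matver}, and Corollary \ref{loro}, we obtain the following theorem,'' but the substance is identical.
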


\section{A composition series for the $\so(m)$-module $\gl(m)$ when $\ell=2$}

We assume throughout this section that $\ell=2$, that $m\geq 2$,
and that $f$ is non-degenerate, symmetric and non-alternating.

\begin{theorem} Suppose that $m=3$ or
$m\geq 5$. Then $L(f)^{(1)}$ is a simple Lie algebra, and hence an
irreducible $L(f)$-module, of dimension ${{m}\choose{2}}$.
\end{theorem}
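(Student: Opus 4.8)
The plan is to reduce everything to characteristic-2 orthogonal computations already set up in the paper. By Theorem \ref{matver2}-flavoured statements and the discussion in \S\ref{secbas}, when $\ell=2$ we have $L(f)=M(f)$, and since $f$ is non-degenerate, symmetric and non-alternating, there is a basis $\mathcal B$ of $V$ relative to which $f$ has diagonal Gram matrix $D=\mathrm{diag}(d_1,\dots,d_m)$ with all $d_i\neq 0$. Via the Lie isomorphism $M_{\mathcal B}$, it suffices to work with $L(D)$, the set of all $A\in\gl(m)$ with $d_iA_{ij}=d_jA_{ji}$, whose derived algebra $L(D)^{(1)}$ consists of all such $A$ with $A_{ii}=0$ (this is exactly part (5) of Theorem \ref{lami}). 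The dimension count $\binom{m}{2}$ is then immediate from the matrix description: one free off-diagonal entry for each pair $i<j$.

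The core is the simplicity claim. First I would handle the base case $m=3$ directly: extending scalars to the quadratic extension where every $d_i$ becomes a square if necessary (which does not affect simplicity, by the standard argument that simplicity is preserved under faithfully flat base change of the base field, as used in Proposition \ref{sosimple}), we may assume $D=I_m$, so $L(f)^{(1)}=L^{(1)}$ where $L=\so(3)$. But this is precisely the $m=3$ case of Theorem \ref{sosimple1}: $L^{(1)}$ has basis $\{E_{12},E_{23},E_{31}\}$ with $[E_{ij},E_{jk}]=E_{ik}$, which is $3$-dimensional and perfect, hence simple. For $m\geq 5$, the same scalar-extension reduction lands us in the $m\geq 5$ case of Theorem \ref{sosimple1}, which asserts exactly that $L^{(1)}$ is simple. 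So in fact the entire theorem follows from Theorem \ref{sosimple1} combined with the matrix/dimension bookkeeping of Theorem \ref{lami}(5) and a scalar-extension argument — essentially the characteristic-2 analogue of Proposition \ref{sosimple}.

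The one genuine subtlety, and the step I expect to be the main obstacle, is the scalar-extension argument in characteristic $2$: I must be careful that extending scalars to make $D$ congruent to $I_m$ does not secretly change the form from non-alternating to alternating, and that $L(f)^{(1)}\otimes_F \overline F$ really is $\so(m)^{(1)}$ over the larger field (i.e. that forming derived algebras commutes with base change, which it does since it is defined by spans of brackets, a finite-dimensional linear condition). Over a perfect field of characteristic $2$ every element is a square, but $F$ need not be perfect; still, one only needs a finite extension in which the finitely many $d_i$ acquire square roots, and such an extension is separable precisely when the $d_i$ are... actually here one should simply take the extension generated by square roots of the $d_i$ — purely inseparable or not, it is a field extension, and simplicity of a finite-dimensional Lie algebra is preserved under arbitrary field extension (if $I$ were a nonzero proper ideal of $L^{(1)}$ over $F$, then $I\otimes_F K$ would be a nonzero proper ideal over $K$). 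Once that is pinned down, the theorem is proved. The "irreducible $L(f)$-module" clause is then free: a simple Lie algebra of dimension $>1$ is an irreducible module over itself, and $L(f)$ acts on its ideal $L(f)^{(1)}$ through the adjoint action, which restricted to $L(f)^{(1)}$ is the adjoint representation of $L(f)^{(1)}$; any $L(f)$-submodule is in particular an ideal of $L(f)^{(1)}$, hence $0$ or everything — alternatively invoke Theorem \ref{sosimple1}'s second sentence directly, which already states $L^{(1)}$ is an irreducible $L$-module for $m\geq 3$ when $\ell=2$.
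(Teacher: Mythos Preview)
Your proposal is correct and matches the paper's approach: the paper's proof is the one-line ``This was already proven in \S\ref{dondeso}'', which amounts to exactly your argument --- extend scalars so that the Gram matrix becomes $I_m$, then invoke Theorem~\ref{sosimple1} (valid in all characteristics, including $\ell=2$) together with the matrix description and dimension count from Theorem~\ref{lami}(5). Your care about derived algebras commuting with base change and about simplicity descending from the extension field is the right way to fill in what the paper leaves implicit; the worry about the form becoming alternating is unfounded since $f(v,v)\neq 0$ for some $v$ remains true after scalar extension.
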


\begin{proof}
This was already proven in \S\ref{dondeso}.
\end{proof}

\begin{prop} Suppose that $m=4$ and let $D$ be the discriminant
of $f$ relative to a basis of $V$. Then

(1) $L(f)^{(1)}$ is 6-dimensional perfect Lie algebra.

(2) If $D\notin F^2$ then $L(f)^{(1)}$ is a simple Lie algebra.

(3) If $D\in F^2$ then $L(f)^{(1)}=S\ltimes R$, where $S$ is a
simple 3-dimensional subalgebra of $L(f)^{(1)}$, and $R$ is
abelian, the solvable radical of $L(f)^{(1)}$ and an irreducible
 $L(f)^{(1)}$-module.

(4) $L(f)^{(1)}$ is an irreducible $L(f)$-module.
\end{prop}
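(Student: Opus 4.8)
The plan is to work in the matrix model. By Theorem~\ref{lami}(5) we may choose a basis of $V$ for which $f$ has Gram matrix $\mathrm{diag}(d_1,d_2,d_3,d_4)$ with all $d_i\neq 0$; then $L(f)^{(1)}$ has basis $\{G_{ij}:1\le i<j\le 4\}$, where $G_{ij}=d_je_{ij}+d_ie_{ji}$ (and we put $G_{ji}=G_{ij}$), and a one-line computation from (\ref{mult}) gives the rules $[G_{ij},G_{jk}]=d_jG_{ik}$ for pairwise distinct $i,j,k$ and $[G_{ij},G_{kl}]=0$ whenever $\{i,j\}\cap\{k,l\}=\emptyset$. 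This settles (1): $\dim L(f)^{(1)}=6$, and $L(f)^{(1)}$ is perfect by these rules (or by Theorem~\ref{lami}(5)). Note that the discriminant $D$ of the statement is $d_1d_2d_3d_4$ up to a square, and recall $\ell=2$, so squares have unique square roots and all signs vanish.

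Next I would establish a reduction. If $I$ is an ideal of $L(f)^{(1)}$ and $x=\sum x_{ij}G_{ij}\in I$, then $(\ad G_{12})^2x=d_1d_2(x_{13}G_{13}+x_{14}G_{14}+x_{23}G_{23}+x_{24}G_{24})$, and likewise with $\ad G_{13}$, $\ad G_{14}$; subtracting, each of $x_{12}G_{12}+x_{34}G_{34}$, $x_{13}G_{13}+x_{24}G_{24}$, $x_{14}G_{14}+x_{23}G_{23}$ lies in $I$. Hence a nonzero ideal contains, after relabelling the indices, a nonzero $y=\alpha G_{12}+\beta G_{34}$. From $y$ one extracts $\alpha d_2G_{13}+\beta d_3G_{24}\in I$ (apply $\ad G_{13}$, then $\ad G_{12}$, and divide by $d_1$) and $\beta d_4G_{13}+\alpha d_1G_{24}\in I$ (apply $\ad G_{14}$); the determinant of this pair of vectors of $\langle G_{13},G_{24}\rangle$ is $\alpha^2d_1d_2-\beta^2d_3d_4$. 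If it is nonzero, then $G_{13},G_{24}\in I$ and the multiplication rules drag in every $G_{ij}$, so $I=L(f)^{(1)}$. If it vanishes, then $\alpha,\beta$ are both nonzero with $(\beta/\alpha)^2=d_1d_2/d_3d_4\in F^2$, so $D\in F^2$, and $y$ is then a scalar multiple of $r_1:=G_{12}+a_1G_{34}$ with $a_1=(d_1d_2/d_3d_4)^{1/2}$. This already yields (2): if $D\notin F^2$ the only nonzero ideal is $L(f)^{(1)}$, which has dimension $>1$ and is therefore simple.

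For (3), suppose $D\in F^2$ and set $a_1=(d_1d_2/d_3d_4)^{1/2}$, $a_2=a_1d_3/d_2$, $a_3=a_1d_4/d_2$, $r_1=G_{12}+a_1G_{34}$, $r_2=G_{13}+a_2G_{24}$, $r_3=G_{14}+a_3G_{23}$ and $R=\langle r_1,r_2,r_3\rangle$. Using the identities $a_ia_jd_k=d_l$ (a consequence of $a_1^2=d_1d_2/d_3d_4$) one checks that $R$ is an abelian ideal with each $[G_{ab},r_c]$ a scalar multiple of some $r_d$. On the other hand $S:=\langle G_{12},G_{13},G_{23}\rangle$ is a subalgebra --- its three brackets are $d_1G_{23}$, $d_2G_{13}$, $d_3G_{12}$ --- it meets $R$ in $0$ (inspect the $G_{14},G_{24},G_{34}$ coordinates) and $\dim S+\dim R=6$, so $L(f)^{(1)}=S\ltimes R$; being $3$-dimensional and perfect with no proper nonzero ideal (direct check, or extend scalars to $\so(3)^{(1)}$ and apply Theorem~\ref{sosimple1}), $S$ is simple. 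Consequently $R$ is a solvable ideal with $L(f)^{(1)}/R\cong S$ simple, so $R$ is the solvable radical. Finally, since $R$ is abelian it acts trivially on itself, so its $L(f)^{(1)}$-module structure coincides with its $S$-module structure, and a short computation with the multiplication rules --- or, after scalar extension, the irreducibility in characteristic $2$ of the standard $3$-dimensional $\so(3)$-module --- shows $R$ is irreducible; together with the reduction above, this also identifies $0$, $R$, $L(f)^{(1)}$ as the only ideals of $L(f)^{(1)}$.

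For (4), let $W\neq 0$ be an $L(f)$-submodule of $L(f)^{(1)}$. By the reduction of the previous paragraph (carried out with $\ad G_{ab}$, $G_{ab}\in L(f)^{(1)}\subseteq L(f)$), $W$ contains some $\alpha G_{12}+\beta G_{34}$; bracketing with the diagonal matrix $e_{11}\in L(f)$, which fixes $G_{12}$ and annihilates $G_{34}$, deposits a single basis vector $G_{ij}$ into $W$, and the multiplication rules then force all $G_{ij}\in W$, so $W=L(f)^{(1)}$ (one could instead extend scalars and invoke Theorem~\ref{sosimple1} for $\ell=2$, $m=4$). I expect the real work to be the explicit part of (3): verifying that $R$ is an abelian ideal, that it is an irreducible module, and that $S$ is a complementary simple subalgebra all reduce to bookkeeping with the square-root coefficients $a_i$ in characteristic $2$, where one must keep in mind that the pertinent $3$-dimensional simple object is $\so(3)$ --- not $\sl(2)$, which fails to be perfect when $\ell=2$ --- and that the uniqueness of square roots in characteristic $2$ is precisely what forces a single abelian ideal, the characteristic-$2$ degeneration of the isomorphism $\so(4)\cong\sl(2)\oplus\sl(2)$ valid for $\ell\neq 2$ and $F=F^2$.
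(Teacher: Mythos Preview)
Your proof is correct and considerably more self-contained than the paper's. The paper dispatches (2) and (3) entirely by citation---to Bourbaki, Chapter I, \S 6, Exercise 26(b), and to \cite{CS}---whereas you give direct arguments over the base field: the reduction via $(\ad G_{12})^2$ projecting onto the three complementary pairs, followed by the $2\times 2$ determinant $\alpha^2d_1d_2-\beta^2d_3d_4$, cleanly separates the two discriminant cases and in the square case produces the explicit elements $r_i$ spanning the unique abelian ideal $R$. For (4) the two arguments converge on the same idea---bracket with a diagonal idempotent $e_{ii}$---but the paper first extends scalars to an algebraic closure (taking the Gram matrix to $I_4$), constructs $S$ and $R$ there, shows $R$ is the only proper $L(f)^{(1)}$-submodule, and then observes $[e_{11},g_1]=f_1\notin R$; your version avoids scalar extension by noting that the reduction already places a nonzero $\alpha G_{ij}+\beta G_{kl}$ in $W$, after which $[e_{ii},\,\cdot\,]$ isolates a single basis vector. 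What your approach buys is an explicit, field-intrinsic description of $R$ (with the coefficients $a_i$) and of the complete ideal lattice $\{0,R,L(f)^{(1)}\}$ when $D\in F^2$; what the paper's approach buys is brevity, at the cost of relying on outside sources.
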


\begin{proof} (1) Since $L(f)$ is 10-dimensional, Theorem \ref{lami} ensures that $L(f)^{(1)}$
is a 6-dimensional perfect Lie algebra.

(2) This can be found in \cite{B}, Chapter I, \S 6, Exercise 26(b)
as well as in \cite{CS}.

(3) This can be found in \cite{CS}.

(4) It suffices to prove this when $F$ is algebraically closed.
Although the result follows from Theorem \ref{sosimple1}, we
provide here an alternative argument. As seen in Theorem
\ref{lami}, $f$ admits $I_4$ as Gram matrix and $L=L(I_m)$ (resp.
$M=L(I_m)^{(1)}$) consists of all symmetric (resp. alternating)
matrices. Thus a basis of $L$ is formed by all $e_{ii}$ and all
$e_{ij}+e_{ji}$, $1\leq i\neq j\leq 4$, and the latter form a
basis of $M$. Set
$$
f_1=e_{12}+e_{21},\; f_2=e_{23}+e_{32},\; f_3=e_{13}+e_{31},
$$
$$
h_1=e_{34}+e_{43},\; h_2=e_{14}+e_{41},\; h_3=e_{42}+e_{24},
$$
$$
g_1=f_1+h_1,\; g_2=f_2+h_2,\; g_3=f_3+h_3,
$$
$$
S=\langle f_1,f_2,f_3\rangle,\; R=\langle g_1,g_2,g_3\rangle.
$$
Then $M=S\ltimes R$, where $S$ is a simple Lie algebra, and $R$ is
an abelian ideal of $M$ and an irreducible $S$-module (isomorphic
to the adjoint module of $S$). It follows that $S$ is the only
non-zero proper $M$-submodule of $M$. Since
$$
[e_{11},g_1]=f_1\notin R,
$$
$M$ is irreducible as $L$-module.
\end{proof}

\begin{note}{\rm If $m=2$ then $L=L(f)$ is solvable of class
2, but not nilpotent, with
$$
\dim L=3,\, \dim L^{(1)}=1,\,\dim L^{(2)}=0.
$$
}
\end{note}

Combining the results of this section with Proposition \ref{gam}
and Theorem \ref{lami} we obtain the following theorem.

\begin{theorem}\label{41} Suppose that $\ell=2$, that $m\geq 2$, and that $f$ is
non-degenerate, symmetric and non-alternating. Then

(1) The $L(f)$-module $\gl(V)$ has $m+2$ composition factors. A
composition series can be obtained by inserting $m-1$ arbitrary
subspaces between $L(f)$ and $L(f)^{(1)}$ in the series
$$
0\subset L(f)^{(1)}\subset L(f)\subset \gl(V).
$$
Moreover, if $m=3$ or $m\geq 5$ then $L(f)^{(1)}$ is a simple Lie
algebra of dimension~${{m}\choose{2}}$.

(2) $L(f)$ is isomorphic to the symmetric square $S^2(V)$ as
$L(f)$-modules. Moreover, there is a basis of $V$ relative to
which $f$ has Gram matrix $D=\mathrm{diag}(d_1,\dots,d_m)$ and,
relative to this basis, $L(f)$ consists of all $A\in\gl(m)$ such
that $d_iA_{ij}=d_j A_{ji}$.

(3) $L(f)^{(1)}$ is isomorphic to the exterior square
$\Lambda^2(V)$ as $L(f)$-modules. Moreover, relative to the above
basis, $L(f)^{(1)}$ consists of all $A\in\gl(m)$ such that
$A_{ii}=0$ and $d_iA_{ij}=d_j A_{ji}$.

(4) $\gl(V)/L(f)\cong L(f)^{(1)}$ as $L(f)$-modules. In
particular, $\gl(V)$ has $m$ trivial composition factors, and 2
composition factors isomorphic to $L(f)^{(1)}\cong \Lambda^2(V)$,
which is itself the trivial module if and only if $m=2$.
\end{theorem}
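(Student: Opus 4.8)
\medskip

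The plan is to derive Theorem~\ref{41} by combining the module identifications already obtained (Proposition~\ref{gam} and Theorem~\ref{lami}) with the irreducibility and simplicity results of \S\ref{secSoSimple}, and then counting composition factors of the chain $0\subset L(f)^{(1)}\subset L(f)\subset\gl(V)$.

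First I would dispose of parts (2), (3), (4), which are essentially bookkeeping. By Proposition~\ref{gam}(c) the $L(f)$-isomorphism $\Gamma\colon V\otimes V\to\gl(V)$ restricts to an isomorphism $S^2(V)\to L(f)$, so $L(f)\cong S^2(V)$; the diagonal Gram matrix $D=\mathrm{diag}(d_1,\dots,d_m)$ and the description of $L(f)$ as the set of $A\in\gl(m)$ with $d_iA_{ij}=d_jA_{ji}$ are recorded in \S\ref{subcla} (equivalently in Theorem~\ref{lami}(5)), which gives (2). For (3), Theorem~\ref{lami}(1) says that $\Gamma$ carries the submodule $\Lambda^2(V)\subseteq S^2(V)$ onto the ideal $L(f)^{(1)}$, whence $L(f)^{(1)}\cong\Lambda^2(V)$, and the matrix form ($A_{ii}=0$ together with $d_iA_{ij}=d_jA_{ji}$) is again Theorem~\ref{lami}(5). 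For (4), Theorem~\ref{lami}(2) gives $\gl(V)/L(f)\cong L(f)^{(1)}$, which with (3) yields $\gl(V)/L(f)\cong\Lambda^2(V)$; since $\dim\Lambda^2(V)={m\choose 2}$, this module is one-dimensional exactly when $m=2$, which is the remaining assertion of (4).

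For (1) I would analyse the three factors of $0\subset L(f)^{(1)}\subset L(f)\subset\gl(V)$. The bottom factor $L(f)^{(1)}$ is an irreducible $L(f)$-module: for $m\geq3$ this is Theorem~\ref{sosimple1} (note that an $L(f)$-submodule of $L(f)^{(1)}$ is, by definition of the adjoint action, an ideal of $L(f)$ lying inside $L(f)^{(1)}$, hence an ideal of $L(f)^{(1)}$, so module-irreducibility follows from the Lie-algebra statement), and for $m=2$ it is immediate because then $\dim L(f)^{(1)}=1$. The middle factor $L(f)/L(f)^{(1)}$ has dimension $m$ by Theorem~\ref{lami}(1) and is a trivial $L(f)$-module, since $x\cdot(y+L(f)^{(1)})=[x,y]+L(f)^{(1)}=0$ for $x\in L(f)$; hence every chain of subspaces between $L(f)^{(1)}$ and $L(f)$ consists of submodules, and inserting $m-1$ arbitrary ones refines this factor into $m$ trivial composition factors. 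The top factor $\gl(V)/L(f)\cong L(f)^{(1)}$ is irreducible for the same reason as the bottom factor. Adding up, $\gl(V)$ has $1+m+1=m+2$ composition factors, $m$ of them trivial and two isomorphic to $L(f)^{(1)}\cong\Lambda^2(V)$, and the simplicity of $L(f)^{(1)}$ for $m=3$ and $m\geq5$ is the first assertion of Theorem~\ref{sosimple1}.

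I do not expect a genuine obstacle here: the only substantive ingredient, the irreducibility (and, outside the case $m=4$, the simplicity) of $L(f)^{(1)}$ as an $L(f)$-module, has already been established in \S\ref{secSoSimple}, so the present theorem is a matter of assembling those facts and tracking the two non-trivial composition factors through $\Gamma$. The one point that needs a moment's care is the observation, used above, that an $L(f)$-submodule of the ideal $L(f)^{(1)}$ is automatically an ideal of $L(f)^{(1)}$, which is precisely what allows one to pass from the Lie-algebra statements of \S\ref{secSoSimple} to module-theoretic irreducibility.
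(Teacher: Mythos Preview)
Your argument is correct and mirrors the paper's own one-line proof, which simply says that Theorem~\ref{41} follows by combining Proposition~\ref{gam} and Theorem~\ref{lami} with the irreducibility results of the surrounding section. The only point you gloss over is that Theorem~\ref{sosimple1} is literally stated for $L(I_m)$ rather than for a general non-alternating $L(f)$; the paper bridges this gap elsewhere by extending scalars to an algebraic closure (where every non-degenerate symmetric non-alternating form is equivalent to the one with Gram matrix $I_m$), and you should insert the same remark here, particularly for $m=4$, where your parenthetical reduction to Lie-algebra simplicity is unavailable and one really needs the module-irreducibility clause of Theorem~\ref{sosimple1}.
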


\section{A composition series for the $\sy(2n)$-module $\gl(2n)$ when
$\ell=2$}\label{lastsec}

We assume throughout this section that $m=2n$ and that $f$ is
non-degenerate and alternating. We also assume that $\ell=2$,
except in Proposition \ref{modhn} and Note~\ref{ntabu}, where
$\ell$ is arbitrary.

\begin{theorem}\label{teosec12}
Suppose $m>4$. If $4|m$ then $L(f)^{(2)}/\s$ is a simple Lie
algebra. If $4\nmid m$ then $L(f)^{(2)}$ is a simple Lie algebra.
\end{theorem}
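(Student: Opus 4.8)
The plan is to work with the explicit matrix description of $L(f)^{(2)}$ supplied by Theorem~\ref{lami}(3). Choosing a basis so that $f$ has Gram matrix $J$, that theorem gives
$$
L(J)^{(2)}=\left\{\left(\begin{array}{cc} a & b\\ c & a'\end{array}\right)\ \Big|\ a\in\gl(n),\ \tr(a)=0,\ b,c\text{ alternating}\right\},
$$
and I would decompose this as $L(J)^{(2)}=\mathfrak{d}\oplus\mathfrak{b}\oplus\mathfrak{c}$, where $\mathfrak{d}$ is the subspace with $b=c=0$, $\mathfrak{b}$ the one with $a=c=0$, and $\mathfrak{c}$ the one with $a=b=0$. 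Using $\ell=2$ (so that $-a'=a'$), the multiplication rule (\ref{mult}), the identity (\ref{skew}), and the surjectivity of $A\mapsto A+A'$ from $\gl(n)$ onto $\mathrm{Alt}(n)$ recorded in \S\ref{subtra}, I would first establish the following structural facts: the map carrying $a\in\sl(n)$ to the block matrix with diagonal blocks $a,a'$ is a Lie isomorphism onto $\mathfrak{d}$, so $\mathfrak{d}\cong\sl(n)$; $\mathfrak{b}$ and $\mathfrak{c}$ are abelian; $[\mathfrak{b},\mathfrak{c}]=\mathfrak{d}$, because $\sl(n)$ is spanned by products $bc$ of alternating $n\times n$ matrices (one gets $e_{ik}$ from $b=e_{ij}+e_{ji}$, $c=e_{jk}+e_{kj}$ with $i,j,k$ distinct, and $e_{ii}+e_{jj}$ from $c=b=e_{ij}+e_{ji}$); $[\mathfrak{d},\mathfrak{b}]\subseteq\mathfrak{b}$ and $[\mathfrak{d},\mathfrak{c}]\subseteq\mathfrak{c}$; and, under the identification $\mathfrak{d}\cong\sl(n)$, the $\mathfrak{d}$-modules $\mathfrak{b}$ and $\mathfrak{c}$ coincide with the $\sl(n)$-modules $T$ and $C$ of \S\ref{sec2}, hence are irreducible by Theorem~\ref{ugas} and, since $\dim\mathfrak{b}=\dim\mathfrak{c}=\binom{n}{2}>1$ for $n\geq 3$, nontrivial. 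Finally, by Theorem~\ref{lami}(6) one has $\s\subseteq L(f)^{(2)}$ exactly when $4\mid m$; in that case $\s$ corresponds under $\mathfrak{d}\cong\sl(n)$ to the line $\s_n$ of scalar matrices, and Theorem~\ref{simplysl2} (applicable because $4\mid m$ and $m>4$ force $n\geq 4$, while $4\nmid m$ and $m>4$ force $n\geq 3$ odd) shows that $\mathfrak{d}$ is simple when $n$ is odd and has $\s_n$ as its unique proper nonzero ideal when $n$ is even.

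The theorem then reduces to showing: every ideal $I$ of $L(f)^{(2)}$ that is nonzero (in case $4\nmid m$) or properly contains $\s$ (in case $4\mid m$) equals $L(f)^{(2)}$. This yields simplicity: when $4\nmid m$ it says $L(f)^{(2)}$, of dimension $\binom{m}{2}-1>1$, has no proper nonzero ideal; when $4\mid m$, the ideals of $L(f)^{(2)}/\s$ are exactly the ideals of $L(f)^{(2)}$ containing $\s$, so it says $L(f)^{(2)}/\s$, of dimension $\binom{m}{2}-2>1$, has no proper nonzero ideal. To prove the claim I would choose $x\in I\setminus\s$ and write $x=\left(\begin{array}{cc}a & b\\ c & a'\end{array}\right)$. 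If $b=c=0$ then $x\in\mathfrak{d}$, and $x\notin\s$ means $x$ is a nonzero element of $\mathfrak{d}$ lying outside $\s_n$; so the ideal $I\cap\mathfrak{d}$ of $\mathfrak{d}\cong\sl(n)$ is not contained in $\s_n$, whence $\mathfrak{d}\subseteq I$ by the facts above. If $b\neq 0$ (the case $c\neq 0$ being symmetric), then for alternating $g_1,g_2$ a block computation---in which the $a$- and $c$-parts cancel because $\ell=2$---gives
$$
\left[\left(\begin{array}{cc}0 & 0\\ g_1 & 0\end{array}\right),\left[\left(\begin{array}{cc}0 & 0\\ g_2 & 0\end{array}\right),x\right]\right]=\left(\begin{array}{cc}0 & 0\\ g_1bg_2+g_2bg_1 & 0\end{array}\right)\in I\cap\mathfrak{c}.
$$
Choosing $g_1,g_2$ with $g_1bg_2+g_2bg_1\neq 0$ makes $I\cap\mathfrak{c}$ a nonzero $\mathfrak{d}$-submodule of the irreducible module $\mathfrak{c}$, so $\mathfrak{c}\subseteq I$ and then $\mathfrak{d}=[\mathfrak{b},\mathfrak{c}]\subseteq I$. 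In both cases $\mathfrak{d}\subseteq I$; then $[\mathfrak{d},\mathfrak{b}]$ is a nonzero $\mathfrak{d}$-submodule of the irreducible $\mathfrak{b}$ (nonzero because $\mathfrak{d}$ acts nontrivially on it), so $\mathfrak{b}\subseteq I$, and likewise $\mathfrak{c}\subseteq I$, giving $I\supseteq\mathfrak{d}+\mathfrak{b}+\mathfrak{c}=L(f)^{(2)}$.

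The step I expect to be the main obstacle is the matrix lemma invoked above: for every nonzero alternating $n\times n$ matrix $b$ with $n\geq 3$ there exist alternating $g_1,g_2$ with $g_1bg_2+g_2bg_1\neq 0$. Characteristic $2$ makes this genuinely delicate, since the naive choice $g_1=g_2$ forces $2g_1bg_1=0$. My plan is to compute the $(p,q)$-entry of $(e_{pi}+e_{ip})\,b\,(e_{jq}+e_{qj})+(e_{jq}+e_{qj})\,b\,(e_{pi}+e_{ip})$ for four distinct indices $p,i,j,q$; it equals $b_{ij}$, which settles every case with $n\geq 4$. For $n=3$ there are no four distinct indices, but the set of ``bad'' matrices $b$ is the solution space of a fixed linear system with coefficients in the prime field, and the dimension of such a space is unchanged by field extension, so it suffices to verify over $\mathbb{F}_2$ that no nonzero alternating $3\times 3$ matrix is bad---a short finite check. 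Granting this lemma, the argument above completes the proof.
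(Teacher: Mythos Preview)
Your proof is correct. The overall strategy coincides with the paper's: both use the block decomposition $L(J)^{(2)}=\mathfrak{d}\oplus\mathfrak{b}\oplus\mathfrak{c}$, invoke Theorem~\ref{simplysl2} for the ideal structure of $\mathfrak{d}\cong\sl(n)$, and appeal to Theorem~\ref{ugas} for the irreducibility of $\mathfrak{b}$ and $\mathfrak{c}$ as $\sl(n)$-modules. Your structural facts (in particular $[\mathfrak{b},\mathfrak{c}]=\mathfrak{d}$) and your double-bracket computation are all accurate, and your $n=3$ lemma is true: writing $E_1=e_{12}+e_{21}$, $E_2=e_{13}+e_{31}$, $E_3=e_{23}+e_{32}$, the map $b\mapsto E_1bE_2+E_2bE_1$ sends $E_1\mapsto E_2$, $E_2\mapsto E_1$, $E_3\mapsto 0$, so together with its two cyclic variants it kills only $b=0$.

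The one point worth noting is that the paper avoids your matrix lemma entirely. Where you bracket twice with elements of $\mathfrak{c}$ to land in $\mathfrak{c}$ (forcing you to prove that $g_1bg_2+g_2bg_1$ can be made nonzero), the paper instead brackets once with a single basis element of $\mathfrak{b}$ or $\mathfrak{c}$ to move a nonzero $b$- or $c$-entry into an off-diagonal position of the $a$-block: concretely, if $b_{ij}\neq 0$ with $i\neq j$, pick $k\notin\{i,j\}$ (possible since $n\geq 3$) and observe that bracketing $x$ with the $\mathfrak{c}$-element having $c=e_{jk}+e_{kj}$ produces an element whose $(i,k)$-entry in the $a$-block equals $b_{ij}$. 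One more round of explicit brackets inside $\mathfrak{d}$ then isolates a single off-diagonal element of $\mathfrak{d}$, and the argument finishes as yours does. This route is slightly more hands-on but uniform in $n\geq 3$; your route is more structural but pays for it with the separate $n=3$ check.
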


\begin{proof}
Let $J$ as defined in (\ref{defjo}) and let $I$ be a nonzero ideal of $L=L(J)^{(2)}$. Let $T$ and $C$ be the subspaces of $L$ defined by
$$
T = \left\lbrace \begin{pmatrix}
0 & t \\
0 & 0
\end{pmatrix} : t \text{ alternating } \right\rbrace,\quad C = \left\lbrace \begin{pmatrix}
0 & 0 \\
t & 0
\end{pmatrix} : t \text{ alternating } \right\rbrace.
$$

Suppose first that $I$ consists only of diagonal matrices. If $I$ contains a non-scalar matrix, the action of $\sl(n)$ on $I$ yields $u\oplus u'\in I$ for all $u\in \sl(n)$, against the assumption. So we have $I=\s$ if $4|m$, and a contradiction if $4\nmid m$.

Suppose next that $I$ contains a non-diagonal matrix $x$. Let $i,j,k$ be distinct indices between 1 and $n$. Since
$$
[e_{n+j,k}+e_{e_{n+k,j}},x]_{ik} = x_{i,n+j} \quad \text{and} \quad [e_{k,n+j}+e_{j,n+k},x]_{ki} = x_{n+j,i},
$$
we can assume that $x_{ij}\ne 0$. Since
$$
\mathrm{ad}(e_{jk}+e_{n+k,n+j})\circ \mathrm{ad}(e_{ji}+e_{n+i,n+j}) \circ \mathrm{ad}(e_{ki}+e_{n+i,n+k})(x) = x_{ij}(e_{ji}+e_{n+i,n+j}),
$$
it follows $e_{ji}+e_{n+i,n+j}\in I$. So the action of $\sl(n)$ on
$I$ shows that $u\oplus u'\in I$ for all $u\in \sl(n)$. Taking
$A=e_{31}$ and $B=e_{12}+e_{21}$ in (\ref{req3}) we obtain a
nonzero element of $T$, hence $T\subset I$ by Theorem \ref{ugas}.
Similarly we see that $C\subset I$. Therefore $I=L$.
\end{proof}

\begin{prop}\label{modhn} Let
$\mathfrak{h}(n)$ be the Heisenberg algebra of dimension $2n+1$,
whose underlying vector space is $V\oplus F$, with bracket
$$
[u+\beta,v+\gamma]=f(u,v).
$$
Let $u_1,\dots,u_n,v_1,\dots,v_n$ be a basis of $V$ relative to
which $f$ has Gram matrix $J$, as defined in (\ref{defjo}). Let
$z=1\in\mathfrak{h}(n)$. Let $F[X_1,\dots,X_n]$ be the polynomial
algebra over $F$ in $n$ commuting variables $X_1,\dots,X_n$. For
$q\in F[X_1,\dots,X_n]$ let $m_q$ be the linear endomorphism
``multiplication by $q$" of $F[X_1,\dots,X_n]$. Let
$0\neq\alpha\in F$. Then $F[X_1,\dots,X_n]$ becomes a faithful
$\mathfrak{h}(n)$-module via
$$
u_i\mapsto \partial/\partial X_i,\, v_i\mapsto \alpha\cdot
m_{X_i},z\mapsto \alpha\cdot I_{F[X_1,\dots,X_n]}.
$$
Moreover, $F[X_1,\dots,X_n]$ is irreducible if and only if
$\ell=0$. Furthermore, if $\ell$ is prime then
$(X_1^\ell,\dots,X_n^\ell)$ is an $\mathfrak{h}(n)$-invariant
subspace of $F[X_1,\dots,X_n]$ and
$F[X_1,\dots,X_n]/(X_1^\ell,\dots,X_n^\ell)$ is a faithful
irreducible $\mathfrak{h}(n)$-module of dimension $\ell^n$.
\end{prop}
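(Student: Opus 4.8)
The plan is to split the argument into four parts: that the stated formulas define a representation, that the polynomial module is faithful, the irreducibility dichotomy for $F[X_1,\dots,X_n]$, and the analysis of the truncated quotient in prime characteristic.

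First I would verify the module axioms. Since condition (\ref{axmod}) is bilinear in $x$ and $y$, it suffices to check it on the basis $u_1,\dots,u_n,v_1,\dots,v_n,z$ of $\mathfrak{h}(n)$. The pairs $(u_i,u_j)$, $(v_i,v_j)$ and $(z,\cdot)$ are immediate, because the operators $\partial/\partial X_i$ mutually commute, the operators $\alpha\, m_{X_i}$ mutually commute, and $\alpha\cdot I$ is central in $\gl(F[X_1,\dots,X_n])$, while all the corresponding brackets in $\mathfrak{h}(n)$ vanish. For the pair $(u_i,v_j)$, the choice of basis gives $[u_i,v_j]=f(u_i,v_j)z=\delta_{ij}z$, which is sent to $\delta_{ij}\alpha\cdot I$, whereas the Leibniz rule gives $(\partial/\partial X_i)(\alpha X_j g)-\alpha X_j(\partial g/\partial X_i)=\alpha\delta_{ij}g$, i.e. the commutator of the images equals $\alpha\delta_{ij}\cdot I$. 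Hence the assignment is a Lie homomorphism $R\colon\mathfrak{h}(n)\to\gl(F[X_1,\dots,X_n])$.

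For faithfulness, suppose $x=\sum a_iu_i+\sum b_iv_i+cz$ acts as $0$. Applying $R(x)$ to the constant polynomial $1$ yields $\alpha(\sum b_iX_i+c)=0$, so $b_1=\dots=b_n=c=0$ because $\alpha\neq0$ and $1,X_1,\dots,X_n$ are linearly independent; then applying $\sum a_i\,\partial/\partial X_i$ to $X_j$ gives $a_j=0$ for each $j$. The same computation, with $1,X_1,\dots,X_n$ replaced by their images (which are distinct members of the monomial basis of the quotient), proves faithfulness of $F[X_1,\dots,X_n]/(X_1^\ell,\dots,X_n^\ell)$ once $\ell$ is prime.

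The heart of the matter is the irreducibility statements, for which the main tool is a differentiation-to-a-constant argument. Given a nonzero submodule $U$ and $0\neq p\in U$, choose a monomial $X^{\mathbf a}=X_1^{a_1}\cdots X_n^{a_n}$ of maximal total degree occurring in $p$ and apply $D=(\partial/\partial X_1)^{a_1}\cdots(\partial/\partial X_n)^{a_n}$, which maps $U$ into itself: one checks that $D$ annihilates every monomial $X^{\mathbf b}\neq X^{\mathbf a}$ appearing in $p$ (if $DX^{\mathbf b}\neq0$ then $b_i\ge a_i$ for all $i$, and maximality of the degree forces $\mathbf b=\mathbf a$), and that $DX^{\mathbf a}=a_1!\cdots a_n!$. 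When $\ell=0$ this scalar is nonzero, so $1\in U$, and then multiplying by the operators $\alpha\, m_{X_i}$ produces every monomial, giving $U=F[X_1,\dots,X_n]$; combined with the observation that when $\ell$ is prime the ideal $(X_1^\ell,\dots,X_n^\ell)$ is a proper nonzero submodule — it is stable under each $m_{X_i}$, and under $\partial/\partial X_i$ because $\partial(X_j^\ell g)/\partial X_i=X_j^\ell\,\partial g/\partial X_i$ in characteristic $\ell$ — this establishes that $F[X_1,\dots,X_n]$ is irreducible precisely when $\ell=0$. For the quotient $Q=F[X_1,\dots,X_n]/(X_1^\ell,\dots,X_n^\ell)$ with $\ell$ prime, the images of the monomials $X_1^{a_1}\cdots X_n^{a_n}$ with $0\le a_i<\ell$ form a basis, so $\dim Q=\ell^n$; and the same $D$-argument applied to $0\neq\bar p\in U\subseteq Q$ works because now all the exponents involved are $<\ell$, so $a_1!\cdots a_n!$ is a nonzero element of $F$, whence $\bar1\in U$ and then $U=Q$ after multiplying by the $m_{X_i}$. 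The one point demanding care is exactly this characteristic-sensitive behaviour of the factorials $a_1!\cdots a_n!$, which is what simultaneously breaks irreducibility of $F[X_1,\dots,X_n]$ in prime characteristic and restores it for the truncation $Q$.
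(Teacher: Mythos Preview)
Your proof is correct and complete. The paper's own proof consists of the single sentence ``This is clear,'' so you have simply supplied the standard verification that the authors chose to omit; there is no alternative approach to compare.
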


\begin{proof} This is clear.
\end{proof}

\begin{lemma}\label{lmodl2} $L(f)/L(f)^{(2)}\cong \mathfrak{h}(n)$ as Lie
algebras.
\end{lemma}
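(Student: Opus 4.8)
The plan is to transport the problem into the matrix model and then write down an explicit isomorphism onto $\mathfrak{h}(n)$. First, invoking Theorem \ref{lami}(3), I would fix a basis of $V$ for which $f$ has Gram matrix $J$ as in (\ref{defjo}) and replace $L(f)$ by $L=L(J)$; recall that $L$ consists of the block matrices (\ref{deri}) with $A,B,C\in\gl(n)$ and $B,C$ symmetric, that $L^{(1)}$ is the subspace of those with $B,C$ alternating, and that $L^{(2)}$ is the subspace of those with, in addition, $\tr(A)=0$. Since $L^{(1)}$ has codimension $m=2n$ in $L$ and $L^{(2)}$ has codimension $1$ in $L^{(1)}$ by Theorem \ref{lami}(1),(3), the ideal $L^{(2)}$ of $L$ has codimension $2n+1$ in $L$, which matches $\dim\mathfrak{h}(n)$.

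Next I would single out the following elements of $L$, where $e_{ij}$ denotes the $n\times n$ matrix units:
$$
\tilde u_i=\begin{pmatrix}0&e_{ii}\\0&0\end{pmatrix},\qquad \tilde v_i=\begin{pmatrix}0&0\\e_{ii}&0\end{pmatrix}\quad(1\le i\le n),\qquad \tilde z=\begin{pmatrix}e_{11}&0\\0&e_{11}\end{pmatrix},
$$
which lie in $L$ because each $e_{ii}$ is symmetric, and carry out the short block-matrix computations (in characteristic $2$)
$$
[\tilde u_i,\tilde u_j]=0=[\tilde v_i,\tilde v_j],\qquad [\tilde u_i,\tilde v_j]=\delta_{ij}\begin{pmatrix}e_{ii}&0\\0&e_{ii}\end{pmatrix},\qquad [\tilde z,\tilde u_i]=0=[\tilde z,\tilde v_i].
$$
Observing that $\begin{pmatrix}e_{ii}&0\\0&e_{ii}\end{pmatrix}-\tilde z$ has vanishing (hence alternating) off-diagonal blocks and traceless $(1,1)$-block $e_{ii}-e_{11}$, it belongs to $L^{(2)}$; so, writing $\bar x$ for the image of $x\in L$ in $\bar L:=L/L^{(2)}$, I would have $[\bar u_i,\bar v_j]=\delta_{ij}\bar z$ together with $[\bar u_i,\bar u_j]=0=[\bar v_i,\bar v_j]$ and $[\bar z,\bar u_i]=0=[\bar z,\bar v_i]$.

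Then I would compare with $\mathfrak{h}(n)=V\oplus F$: picking the symplectic basis $u_1,\dots,u_n,v_1,\dots,v_n$ of $V$ with Gram matrix $J$ (so $f(u_i,v_j)=\delta_{ij}$ and $f(u_i,u_j)=0=f(v_i,v_j)$) and $z=1$, the bracket of $\mathfrak{h}(n)$ is $[u_i,v_j]=\delta_{ij}z$, all other brackets of basis vectors vanishing. Hence the linear map $\phi:\mathfrak{h}(n)\to\bar L$ sending $u_i,v_i,z$ to $\bar u_i,\bar v_i,\bar z$ respects every bracket of basis vectors and is therefore a homomorphism of Lie algebras. The final step is to check that $\phi$ is bijective: by the dimension count it suffices to verify that $\bar u_1,\dots,\bar u_n,\bar v_1,\dots,\bar v_n,\bar z$ are linearly independent, which is immediate, for if $\sum a_i\tilde u_i+\sum b_i\tilde v_i+c\,\tilde z\in L^{(2)}$ then the matrix
$$
\begin{pmatrix}c\,e_{11}&\mathrm{diag}(a_1,\dots,a_n)\\\mathrm{diag}(b_1,\dots,b_n)&c\,e_{11}\end{pmatrix}
$$
has alternating off-diagonal blocks, forcing all $a_i=b_i=0$, and then a traceless $(1,1)$-block, forcing $c=0$.

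The argument is wholly elementary, using nothing beyond the matrix descriptions already recorded in Theorem \ref{lami}, so I do not expect a genuine obstacle. If anything needs care, it is keeping the characteristic-$2$ signs straight in the bracket computations and noting that $\tilde z$ is symmetric but not alternating, so that $\bar z\neq 0$ — which is precisely what makes the concluding linear-independence step go through.
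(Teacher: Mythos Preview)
Your proof is correct and follows essentially the same approach as the paper: both pass to the matrix model $L(J)$, pick the very same elements (the paper calls them $a,b_i,c_i$, you call them $\tilde z,\tilde u_i,\tilde v_i$), and verify that their images in $L/L^{(2)}$ satisfy the Heisenberg relations. Your write-up is simply more explicit, carrying out the linear-independence check and the reduction of $[\tilde u_i,\tilde v_i]$ modulo $L^{(2)}$ in full, whereas the paper records only the resulting multiplication table.
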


\begin{proof} Let $J\in\gl(4)$ be defined as in (\ref{defjo}) and identify $L$
with $M(J)$. Consider the elements $a,b_1,\dots,b_n,c_1,\dots,c_n$
of $L(f)$ defined as follows in terms of $n\times n$ blocks:
$$
a=\left(%
\begin{array}{cc}
  e_{11} & 0 \\
  0 & e_{11} \\
\end{array}%
\right), b_i=\left(%
\begin{array}{cc}
  0 & e_{ii} \\
  0 & 0 \\
\end{array}%
\right),
c_i=\left(%
\begin{array}{cc}
  0 & 0 \\
  e_{ii} & 0 \\
\end{array}%
\right).
$$
The canonical projection of these elements produces a basis
$L(f)/L(f)^{(2)}$, with multiplication table:
\begin{equation}
\label{tablita} [b_i,c_i]=a.
\end{equation}
 Note the use of the matrix
description of $L(f)^{(2)}$, given in Theorem \ref{lami}, for the
computation of this table. It follows from (\ref{tablita}) that
$L(f)/L(f)^{(2)}\cong \mathfrak{h}(n)$.
\end{proof}

\begin{prop}\label{heiw2} Suppose $m=4$. Then the derived series
of $L=L(f)$ satisfies
$$
\dim L=10,\, \dim L^{(1)}=6,\,\dim L^{(2)}=5,\,\dim L^{(3)}=1,\,
\dim L^{(4)}=0.
$$
Here $L^{(3)}=\s$, $L^{(2)}\cong \mathfrak{h}(2)\cong L/L^{(2)}$,
and $U=L^{(2)}/L^{(3)}$ is a 4-dimensional abelian Lie algebra
that is irreducible as $L$-module. More precisely, the action of
$L$ on $U$ leaves invariant a non-degenerate alternating bilinear
form $g:U\times U\to F$, the kernel of the associated
representation $R:L\to L(g)$ is~$L^{(2)}$, the corresponding
4-dimensional faithful representation $S:L/L^{(2)}\to \gl(U)$ is
irreducible, and $S(L/L^{(2)})=R(L)$ is a 5-dimensional subalgebra
of the symplectic Lie algebra $L(g)$ for which the natural module
$U$ is irreducible. Moreover, if we identify $L/L^{(2)}$ with
$\mathfrak{h}(2)$ then $U$ is isomorphic to the module
$F[X_1,X_2]/(X_1^2,X_2^2)$ of Proposition \ref{modhn}, where
$\alpha=1$.
\end{prop}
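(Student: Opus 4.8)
The plan is to compute directly in the matrix model $L=L(J)=\sy(4)$, with $J$ as in (\ref{defjo}), using the explicit descriptions of the derived ideals furnished by Theorem \ref{lami}. By Theorem \ref{lami}(3), $L^{(2)}$ consists of the matrices (\ref{deri}) with $A\in\sl(2)$ and $B,C$ alternating, so a basis of $L^{(2)}$ is $h=I_4$, $x=e_{12}\oplus e_{21}$, $y=e_{21}\oplus e_{12}$, together with two elements $u,w$ corresponding to the one-dimensional upper-right and lower-left alternating blocks; this gives $\dim L^{(2)}=5$, and, combined with $\dim L^{(1)}=6$ (again Theorem \ref{lami}(3)) and $\dim L=10$, the remaining dimensions follow once $L^{(3)}$ and $L^{(4)}$ are known. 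A short computation with (\ref{mult}) and (\ref{req1})--(\ref{req4}) yields the multiplication table of $L^{(2)}$: the only nonzero brackets are $[x,y]=h=[u,w]$, and $h$ is central. This simultaneously gives $L^{(2)}\cong\mathfrak{h}(2)$, identifies $L^{(3)}=[L^{(2)},L^{(2)}]=Fh=\s$, and yields $\dim L^{(3)}=1$ and $L^{(4)}=[\s,\s]=0$. The isomorphism $L/L^{(2)}\cong\mathfrak{h}(2)$ is exactly Lemma \ref{lmodl2}.

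Next I turn to $U=L^{(2)}/L^{(3)}=L^{(2)}/\s$, which is $4$-dimensional and abelian since $[L^{(2)},L^{(2)}]=L^{(3)}$, viewed as an $L$-module by letting the adjoint action descend to the quotient. For the same reason $L^{(2)}$ acts trivially on $U$, so the representation factors through $L/L^{(2)}\cong\mathfrak{h}(2)$, producing $S:\mathfrak{h}(2)\to\gl(U)$. Using the basis $a,b_1,b_2,c_1,c_2$ of $L/L^{(2)}$ of Lemma \ref{lmodl2} (with $[b_i,c_i]=a$), I would compute, via (\ref{mult}) and (\ref{req1})--(\ref{req4}), the action of $a,b_i,c_i$ on the basis $\bar x,\bar y,\bar u,\bar w$ of $U$; the outcome is that $a$ acts as the identity, while $b_1,b_2$ act as annihilation and $c_1,c_2$ as creation operators. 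Matching these against $\partial/\partial X_i$ and multiplication by $X_i$ on $F[X_1,X_2]/(X_1^2,X_2^2)$, under the dictionary $\bar u\mapsto 1$, $\bar y\mapsto X_1$, $\bar x\mapsto X_2$, $\bar w\mapsto X_1X_2$, exhibits $U$ as the $\mathfrak{h}(2)$-module of Proposition \ref{modhn} with $n=2$, $\ell=2$ and $\alpha=1$. That proposition then gives at once that this module is faithful and irreducible of dimension $4$; hence $S$ is faithful and, since $L\to L/L^{(2)}$ is onto, $U$ is an irreducible $L$-module.

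For the invariant form I would use that the Lie bracket restricts to a map $L^{(2)}\times L^{(2)}\to L^{(3)}=\s$ which, after the identification $\s=Fh\cong F$, descends to a bilinear form $g:U\times U\to F$. From the multiplication table of $L^{(2)}$, $g$ has Gram matrix $J$ in the basis $\bar x,\bar u,\bar y,\bar w$, so $g$ is non-degenerate and alternating and $L(g)\cong\sy(4)$, of dimension $10$. Its $L$-invariance is the derivation property of $\mathrm{ad}$: for $z\in L$ and lifts $p,q$ of elements of $U$ one has $[[z,p],q]+[p,[z,q]]=[z,[p,q]]$, and $[p,q]\in\s=Z(\gl(V))$, so this vanishes. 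Thus $R:L\to L(g)$ is a representation, and its kernel is exactly $L^{(2)}$: the inclusion $L^{(2)}\subseteq\ker R$ is the triviality of the $L^{(2)}$-action on $U$ noted above, while conversely $R$ factors through the injective map $S$, forcing $\ker R\subseteq L^{(2)}$. Hence $R(L)=S(L/L^{(2)})$ is a $5$-dimensional subalgebra of $L(g)$ isomorphic to $\mathfrak{h}(2)$, and $U$ is an irreducible module for it because it is irreducible for $\mathfrak{h}(2)$.

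The main obstacle I anticipate is the organized bookkeeping of the characteristic-$2$ block-matrix computations at the two places where they bite: obtaining the multiplication table of $L^{(2)}$, and---more delicately---computing the action of $a,b_i,c_i$ on $U$ and then choosing the basis correspondence with $F[X_1,X_2]/(X_1^2,X_2^2)$ correctly, since a wrong dictionary would break the appeal to Proposition \ref{modhn}. The safe choice is dictated by the requirement that the common kernel of $b_1$ and $b_2$ on $U$ be sent to the constant $1$, which is killed by both $\partial/\partial X_i$; with the right dictionary in hand, the remaining verifications are mechanical.
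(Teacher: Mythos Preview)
Your proposal is correct and follows essentially the same approach as the paper: work in $L(J)$, use Theorem~\ref{lami} for the derived series, exhibit an explicit Heisenberg basis of $L^{(2)}$, build $g$ from the bracket $L^{(2)}\times L^{(2)}\to L^{(3)}=\s$ and use Jacobi for its $L$-invariance, and identify $U$ with $F[X_1,X_2]/(X_1^2,X_2^2)$. The only organizational difference is that you first match $U$ with the module of Proposition~\ref{modhn} and then invoke that proposition for irreducibility and faithfulness (hence $\ker R=L^{(2)}$), whereas the paper computes the five matrices $R(a),R(b_i),R(c_i)$ explicitly, checks their linear independence, proves irreducibility by a direct nilpotent/kernel argument, and only afterwards matches those matrices against $\partial/\partial X_i$ and $m_{X_i}$; your ordering is slightly more economical but the content is the same.
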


\begin{proof} The dimensions of the terms of the derived series as
well as the fact that $L^{(3)}=\s$ follow from Theorem \ref{lami}.
Let $J\in\gl(4)$ be defined as in (\ref{defjo}) and identify $L$
with $M(J)$. Consider the following basis elements of $L^{(2)}$,
described in terms of $2\times 2$ blocks:
$$
x=\left(%
\begin{array}{cc}
  0 & e_{12}+e_{21} \\
  0 & 0 \\
\end{array}%
\right), y=\left(%
\begin{array}{cc}
  0 & 0 \\
  e_{12}+e_{21} & 0 \\
\end{array}%
\right),
$$
$$
e=\left(%
\begin{array}{cc}
  e_{12} & 0 \\
  0 & e_{21} \\
\end{array}%
\right),f=\left(%
\begin{array}{cc}
  e_{12} & 0 \\
  0 & e_{12} \\
\end{array}%
\right), z=\left(%
\begin{array}{cc}
  I_2 & 0 \\
  0 & I_2 \\
\end{array}%
\right).
$$
Their multiplication table is
$$
[x,y]=z=[e,f].
$$
Thus $L^{(2)}$ is a 5-dimensional Heisenberg algebra and
$L^{(2)}/L^{(3)}$ is a 4-dimensional abelian Lie algebra.

The bracket $[~,~]:L^{(2)}\times L^{(2)}\to L^{(3)}$ is
$L$-invariant by the Jacobi identity and the fact that
$L^{(4)}=0$. Since $L^{(2)}\cong \mathfrak{h}(5)$, the radical of
the alternating form $[~,~]$ is $L^{(3)}$. This induces a
non-degenerate $L$-invariant alternating form, say $g$, on
$U=L^{(2)}/L^{(3)}$. The matrix of $g$ with respect to the basis
${\mathcal B}=\{e+\s,x+\s,f+\s,y+\s\}$ of $U$ is simply $J$. By
definition of $L^{(3)}$, it follows that $L^{(2)}$ is in the
kernel of the representation $R:L\to L(g)\subset \gl(U)$, which
gives rise to a representation $S:L/L^{(2)}\to \gl(U)$. Let
$a,b_1,b_2,c_1,c_2$ be the elements of $L$ defined in
Lemma~\ref{lmodl2}. Then ${\mathcal
C}=\{a+L^{(2)},b_1+L^{(2)},b_2+L^{(2)},c_1+L^{(2)}, c_2+L^{(2)}\}$
is a basis of $L/L^{(2)}\cong \mathfrak{h}(2)$. The matrices
corresponding to these basis vectors via $S$ in $L(g)$ (identified
with $M(J)$) are given in terms of $2\times 2$ blocks as follows:
$$
R(a)=\left(%
\begin{array}{cc}
  I_2 & 0 \\
  0 & I_2 \\
\end{array}%
\right), R(b_1)=\left(%
\begin{array}{cc}
  0 & e_{12}+e_{21} \\
  0 & 0 \\
\end{array}%
\right),
R(b_2)=\left(%
\begin{array}{cc}
  e_{21} & 0 \\
  0 & e_{12}\\
\end{array}%
\right),
$$
$$
R(c_1)=\left(%
\begin{array}{cc}
  0 & 0 \\
  e_{12}+e_{21} & 0 \\
\end{array}%
\right), R(c_2)=\left(%
\begin{array}{cc}
  e_{12} & 0 \\
  0 & e_{21} \\
\end{array}%
\right).
$$
Since these matrices are linearly independent, it follows that the
kernel of $R$ is precisely $L^{(2)}$.

Let $W$ be a non-zero subspace of $U$ is invariant under these
matrices. We claim that $W=U$. Indeed, if $W$ contains any vector
from ${\mathcal B}$ then, acting through $b_1,b_2,c_1,c_2$, we see
that it contains them all. Suppose, if possible, that $W$ contains
no vectors from ${\mathcal B}$. Since $R(b_1)$ is nilpotent, $W$
must contain a non-zero vector from the nullspace of $R(b_1)$,
which is 2-dimensional and spanned by $e+\s,x+\s$. Thus $W$ must
have a vector of the form $(e+\s)+\alpha(x+\s)$ for some $0\neq
\alpha\in F$. The same reasoning applied to $R(b_2)$ shows that
$W$ must have a vector of the form $\alpha (x+\s)+\beta(f+\s)$ for
some $\beta\in F$, so $(e+\s)+\beta(f+\s)\in W$. Applying $b_2$,
it follows that $x+\s\in W$, a contradiction. This proves our
claim.

Consider the basis of $F[X_1,X_2]/(X_1^2,X_2^2)$ associated to
$X_2,1,X_1,X_1X_2$. Relative to this basis the matrices that the
endomorphisms $\partial/\partial X_1$, $\partial/\partial X_2$,
$m_{X_1}$ and $m_{X_2}$ induce on $F[X_1,X_2]/(X_1^2,X_2^2)$ are
exactly $R(b_1),R(b_2),R(c_1), R(c_2)$. This shows that $U$ arises
from the ${\mathfrak h(2)}$-module of Proposition \ref{modhn} by
taking $\alpha=1$.
\end{proof}

\begin{note}\label{ntabu}{\rm Let $d(n)$ be the smallest dimension
of a faithful ${\mathfrak h(n)}$-module. It is well-known that
$d(n)\leq n+2$. In fact, \cite{Bu} proves $d(n)=n+2$ if $\ell=0$.
Note that if $\ell=2$ then ${\mathfrak h(1)}\cong\sl(2)$, so
$d(1)=2$ in this case. In all other cases, i.e., if $(\ell, n)\neq
(2,1)$, then $d(n)=n+2$, as shown in \cite{S}.

We refer the reader to \cite{Bu} and \cite{CR} for the problem of
finding the smallest dimension of a faithful module for various
Lie algebras in characteristic 0.

Note, on the other hand, that ${\mathfrak h(n)}$ has no faithful
{\em irreducible} module if $\ell=0$.

More generally, let $L$ be a Lie algebra such that $[L,L]\cap
Z(L)\neq 0$ and suppose that $T:L\to\gl(U)$ is a faithful
irreducible representation. Then $\ell|\dim(U)$.

Indeed, let $z$ be a non-zero central element of $[L,L]\cap Z(L)$
and let $q\in F[X]$ be the minimal polynomial of $T(z)$. Since $U$
is irreducible and $z$ is central, we see that $q(X)$ is
irreducible. Let $W$ be a composition factor of the $L\otimes
K$-module $U\otimes K$, where $K$ is an algebraic closure of $F$.
Since $z$ is central, Schur's Lemma implies that $z$ acts through
a scalar operator on $W$. But $z\in [L,L]$, so the trace of this
operator is 0. Thus, the scalar is 0 or $\ell\mid \dim_K(W)$.  The
first case is impossible, for otherwise $0$ is a root of $q$,
whence $q=X$, i.e. $T(z)=0$, against the fact that $T$ is
faithful. It follows that $\ell$ divides the dimension over $K$ of
all composition factors of $U\otimes K$, whence $\ell|\dim(U)$.

When $\ell$ is prime then ${\mathfrak h(n)}$ certainly has
faithful irreducible modules. The smallest dimension for such
module is $p^n$. This is actually the only possible dimension when
$F$ is algebraically closed (but not otherwise). In fact, if $F$
is algebraically closed there is only one such module, up
isomorphism and an automorphism of ${\mathfrak h(n)}$, namely the
one described in Proposition \ref{modhn} with $\al=1$. See
\cite{S} for details.}
\end{note}

Combining Theorem \ref{teosec12}, Lemma \ref{lmodl2} and
Proposition \ref{heiw2} with Proposition~\ref{gam} and Theorem
\ref{lami} we obtain the following theorem.

\begin{theorem}\label{31} Suppose that $\ell=2$, that $m=2n$, and
that $f$ is non-degenerate and alternating. Then

(1) If $4\mid m$ then the $L(f)$-module $\gl(V)$ has $m+6$
composition factors. A composition series can be obtained by
inserting $m-1$ arbitrary subspaces between $L(f)$ and
$L(f)^{(1)}$ in the series
$$
0\subset \s\subset  L(f)^{(2)}\subset  L(f)^{(1)}\subset
L(f)\subset U \subset \sl(V)\subset \gl(V),
$$
where $U=L(f)\oplus\langle x\rangle$, $x\in\sl(V)$,
$[x,L(f)]\subseteq L(f)$, and
$$
x=\left(%
\begin{array}{cc}
  I_n & 0\\
  0 & 0 \\
\end{array}%
\right).
$$
All composition factors are trivial, except for
$L(f)^{(2)}/\s/\cong \sl(V)/U$, which has dimension
${{m}\choose{2}}-2$. Moreover, $L(f)^{(2)}/\s$ is a simple Lie
algebra if and only if $m>4$.

(2) If $m\neq 2$ and $4\nmid m$ then the $L(f)$-module $\gl(V)$
has $m+4$ composition factors.  A composition series can be
obtained by inserting $m-1$ arbitrary subspaces between $L(f)$ and
$L(f)^{(1)}$ in the series
$$
0\subset  L(f)^{(2)}\subset  L(f)^{(1)}\subset L(f)\subset
\sl(V)\subset \gl(V).
$$
All composition factors are trivial, except for $L(f)^{(2)}\cong
\sl(V)/L(f)$.

Moreover, $L(f)^{(2)}$ is a simple Lie algebra of dimension
${{m}\choose{2}}-1$.

(3) $L(f)$ is isomorphic to the symmetric square $S^2(V)$ as
$L(f)$-modules, and, relative to suitable basis of $V$, consists
of all matrices
\begin{equation}\label{maint44}
\left(\begin{array}{cc}
   A & B\\
   C & A'
  \end{array}
\right),\, A,B,C\in\gl(n),\text{ where }B,C\text{ are symmetric}.
\end{equation}

(4) $L(f)^{(1)}$ is isomorphic to the exterior square
$\Lambda^2(V)$ as $L(f)$-modules, and consists of all matrices
(\ref{maint44}) such that $B,C$ are alternating.

(5) $L(f)^{(2)}$ is isomorphic to the kernel of the contraction
$L(f)$-epimorphism $\Lambda^2(V)\to F$, given by $v\wedge w\mapsto
f(v,w)$, and consists of all matrices (\ref{maint44}) such that
$B,C$ are alternating and $\tr(A)=0$.

(6) $L(f)/L(f)^{(2)}$ is isomorphic, as Lie algebra, to
${\mathfrak h}(n)$, the Heisenberg algebra of dimension $2n+1$.
\end{theorem}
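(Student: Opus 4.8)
The plan is to read the theorem off from the structural results already in hand, moving the whole problem to $V\otimes V$ via the $L(f)$-isomorphism $\Gamma\colon V\otimes V\to\gl(V)$ of \S\ref{secid}. Parts (3)--(6) are little more than restatements: (3) is Proposition \ref{gam}(c) together with the matrix description in Theorem \ref{lami}(3); (4) combines Theorem \ref{lami}(1) and (3); (5) is Theorem \ref{lami}(3)--(4), the map $\Delta$ there being exactly the contraction $\Lambda^2(V)\to F$; and (6) is Lemma \ref{lmodl2}.

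For the composition series I would fix a basis $\{v_1,\dots,v_n,w_1,\dots,w_n\}$ with Gram matrix $J$ and record, under $\Gamma$, the correspondences $S^2(V)\leftrightarrow L(f)$, $\Lambda^2(V)\leftrightarrow L(f)^{(1)}$, $\ker\Delta\leftrightarrow L(f)^{(2)}$, and, using $\tr\circ\Gamma=\Omega$ (Lemma \ref{contra2}), $\ker\Omega\leftrightarrow\sl(V)$. Thus $0\subset L(f)^{(2)}\subset L(f)^{(1)}\subset L(f)\subset\sl(V)\subset\gl(V)$ is a chain of $L(f)$-submodules, and I would identify its successive quotients one by one. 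The quotient $\gl(V)/\sl(V)\cong\s$ is the trivial $1$-dimensional module, since every commutator is traceless; $L(f)/L(f)^{(1)}$ is the trivial module of dimension $m$, being the abelianization, so it is refined by $m-1$ arbitrary intermediate subspaces, all of which are automatically submodules; $L(f)^{(1)}/L(f)^{(2)}$ is trivial of dimension $1$ because Lemma \ref{lmodl2} identifies it with the centre of $\mathfrak{h}(n)$, forcing $[L(f),L(f)^{(1)}]\subseteq L(f)^{(2)}$. For the remaining factor I would invoke Theorem \ref{teosec12} when $m>4$ (so that $L(f)^{(2)}$, if $4\nmid m$, or $L(f)^{(2)}/\s$, if $4\mid m$, is a simple Lie algebra, hence an irreducible $L(f)$-module of the stated dimension), and Proposition \ref{heiw2} when $m=4$ (so that $L(f)^{(2)}/\s$ is a $4$-dimensional irreducible $L(f)$-module which is not simple as a Lie algebra); Theorem \ref{lami}(6) then tells exactly when $\s$ lies inside $L(f)^{(2)}$, namely iff $4\mid m$, completing the bottom of the series. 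Finally, since $S^2(V)\subseteq\ker\Omega$ and $\Delta$ is the map induced by $\Omega$ on passing to $\Lambda^2(V)$, one gets $\sl(V)/L(f)\cong(\ker\Omega)/S^2(V)\cong\ker\Delta\cong L(f)^{(2)}$ as $L(f)$-modules.

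When $4\nmid m$ and $m\ne 2$ this gives the series of part (2) with $m+4$ factors, $L(f)^{(2)}\cong\sl(V)/L(f)$ being the only non-trivial one. When $4\mid m$ I would insert the intermediate module $U=L(f)\oplus Fx$ with $x=\mathrm{diag}(I_n,0)$: here $\tr(x)=n$, which vanishes in $F$ because $4\mid m$ forces $n$ even, so $x\in\sl(V)$, and a short block computation gives $[x,L(f)]\subseteq L(f)$, whence $U$ is an $L(f)$-submodule with $U/L(f)$ trivial of dimension $1$. The point to check is that, under the isomorphism $\sl(V)/L(f)\cong L(f)^{(2)}$ above, the line $U/L(f)$ goes to $\s$: one computes $\Gamma^{-1}(x)=\sum_i w_i\otimes v_i$ and $\Gamma^{-1}(I_V)=\sum_i(v_i\otimes w_i+w_i\otimes v_i)$, and the quotient map $V\otimes V\to\Lambda^2(V)$ of Lemma \ref{quot}, which in characteristic $2$ sends $v\otimes w$ to $v\otimes w+w\otimes v$, carries the first to the second, so $x$ maps to $I_V$. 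Hence $\sl(V)/U\cong L(f)^{(2)}/\s$, which yields the series of part (1) with $m+6$ factors.

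The main obstacle is this last bookkeeping: pinning down the correct intermediate module $U$ and verifying $\sl(V)/U\cong L(f)^{(2)}/\s$, which rests on the characteristic-$2$ coincidence $v\otimes w-w\otimes v=v\otimes w+w\otimes v$ and on the exact location of $\s$ supplied by Theorem \ref{lami}(6); the rest is dimension counting and the triviality of quotients by derived subalgebras.
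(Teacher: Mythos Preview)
Your argument is correct and follows the same route as the paper, which simply says the theorem is obtained by combining Theorem~\ref{teosec12}, Lemma~\ref{lmodl2}, Proposition~\ref{heiw2}, Proposition~\ref{gam} and Theorem~\ref{lami}. You invoke exactly these results, and in fact supply more detail than the paper does: the paper never spells out why $U=L(f)\oplus Fx$ is an $L(f)$-submodule nor why $\sl(V)/U\cong L(f)^{(2)}/\s$, whereas your computation tracing $x$ through $\Gamma^{-1}$, the quotient map of Lemma~\ref{quot}, and $\Gamma$ again (landing on $I_V$) makes this explicit and is correct.
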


\begin{note}\label{uurr}{\rm
If $m=2$ then $L=L(f)$ is nilpotent of class 2, where
$$
\dim L=3,\, \dim L^{1}=1,\,\dim L^{2}=0.
$$
In particular, $\gl(V)$ has 4 trivial composition factors as
$L(f)$-module.

Note that $L\cong L/L^{(2)}\cong {\mathfrak h(1)}$ and that
through this identification the natural module for $L$ becomes the
module $F[X_1]/(X_1^2)$ of Proposition \ref{modhn} with
$\alpha=1$. }
\end{note}

\noindent{\bf Acknowledgments.} We thank J. Humphreys for useful
comments. This paper is dedicated to him, in appreciation for his
book \cite{H}, from which we learnt the subject.

\end{document}